\newcommand{\E}{{\rm e}}
\DeclareMathOperator*{\diag}{diag}
\newtheorem{Theorem}{Theorem}
\newtheorem{Lemma}{Lemma}
\newtheorem{Proposition}{Proposition}
\theoremstyle{definition}
\newtheorem{Definition}{Definition}
\newtheorem{Remark}{Remark}
\renewcommand*{\tableofcontents}{\listoftoc[\contentsname]{toc}}
\journal{Applied Numerical Mathematics}
\begin{document}

\begin{frontmatter}


\title{Symbol based Convergence Analysis in Block Multigrid Methods with Applications for Stokes Problems\tnoteref{financing}}
\tnotetext[financing]{The work of the second, third, and fourth authors is partly supported by “Gruppo Nazionale per il Calcolo Scientifico" (GNCS-INdAM). The work of Isabella Furci  was carried out within the framework of the project “A multiscale integrated approach to the study of the nervous system in health and disease (MNESYS)” and has been supported by European Union - NextGenerationEU.}

\author[mbolt]{Matthias Bolten}
\ead{bolten@uni-wuppertal.de}
\address[mbolt]{School of Mathematics and Natural Sciences, University of Wuppertal, Wuppertal, Germany}

\author[mdon]{Marco Donatelli}
\ead{marco.donatelli@uninsubria.it}
\address[mdon]{Department of Science and High Technology. University of Insubria, Como, Italy.}

\author[mbolt]{Paola Ferrari}
\ead{ferrari@uni-wuppertal.de}

\author[ifurc]{Isabella Furci\corref{cor1}}
\ead{isabella.furci@dima.unige.it}
\address[ifurc]{Department of Mathematics. University of Genoa, Genoa, Italy.}

\cortext[cor1]{Corresponding author}

\begin{abstract}
 The main focus of this paper is the study of efficient multigrid methods for large linear systems with a particular saddle-point structure. {Indeed, when the system matrix is symmetric, but indefinite, the variational convergence theory that is usually used to prove multigrid convergence cannot be directly applied.}
 
 {However, different algebraic approaches analyze properly preconditioned saddle-point problems, proving convergence of the
Two-Grid method. In particular, this is  efficient when the blocks of the coefficient matrix possess a Toeplitz or circulant structure.}   Indeed, it is possible to derive sufficient conditions for convergence and provide optimal parameters for the preconditioning of the
saddle-point problem in terms of the associated  symbols.
In this paper, we propose a symbol based convergence analysis for problems that have a hidden block Toeplitz structure. Then, they can be investigated focusing on the properties of the associated generating function $\mathbf{f}$, which consequently is a matrix-valued function with dimension depending on the block size of the problem.  As numerical tests we focus on the matrix sequence stemming from  the finite element approximation of the Stokes problem. We show the efficiency of the methods studying the hidden  9-by-9 block multilevel structure of the obtained matrix sequence. Moreover we propose an efficient algebraic multigrid method with convergence rate independent of the matrix size. Finally, we present several numerical tests comparing the results with state-of-the-art strategies.

\end{abstract}

\begin{keyword}
Multigrid methods, block matrix sequences, spectral symbol, Stokes equation
\MSC[2010] 65N55, 15B05, 35P20, 34M40

\end{keyword}

\end{frontmatter}


\section{Introduction}\label{sec:intr}
Many works have focused on efficient iterative methods for the solution of large linear systems with coefficient matrix in saddle-point form \cite{MR3564863, MR3439215, benzi2005numerical}. Indeed, many preconditioned Krylov subspace methods have been developed based on the study of the spectral properties of the coefficient matrix \cite{MR3564863,MR2220678,benzi2006eigenvalues,locspsb1}. When we deal with multilevel and multilevel block systems the performance of some known preconditioners can deteriorate \cite{NSV}. 
Multigrid methods (MGM) are in general a good choice in terms of computational cost and optimal convergence rate. Indeed, 
optimal convergence rates are obtained by constructing and employing a proper sequence of linear systems of decreasing dimensions. Moreover, when the coefficient matrix has a particular multilevel (block) Toeplitz structure, the convergence analysis for such methods can be obtained in compact form exploiting the concept of symbols \cite{FS1,FS2}.
 
The knowledge and the use of the symbol has been successfully employed for designing very efficient preconditioning and multigrid techniques in various settings and several applications \cite{MR3689933,DFFSS,MR4188673,sesana2013spectral}. However, {saddle-point problems are intrinsically indefinite. This limits the possible convergence results that can be exploited, even in the case where we can benefit from the Toeplitz/Toeplitz-like structure of the coefficient matrix. }

In many works \cite{MR3564863, MR2220678, MR3689933} a common choice for problems with saddle-point structure is that of constructing preconditioning for two-by-two block systems involving the Schur complement. In many cases stemming from the approximation of Partial Differential Equations (PDEs), even though the whole problem is indefinite, the latter procedure is based on the solution of a linear system with a positive definite coefficient matrix.

{In this paper we consider the approach suggested  by \cite{MR3439215} which  provides a point-wise smoother and a
coarse grid correction to the linear system after preconditioning from the left and the right using lower and upper triangular matrices,
respectively.}

{This strategy is particularly effective in the case where the blocks of the matrix possess a scalar unilevel Toeplitz and circulant structure. Indeed, in \cite{saddle_DBFF} sufficient conditions for the Two-Grid Method (TGM) convergence and “level independency” for the W-cycle have been obtained in terms of the generating functions of
the  blocks. Studying the latter functions, also the optimal parameters for the preconditioning of the
saddle-point problem and for the point smoother were found. Clearly, in the  scalar setting, the involved generating functions were {univariate or multivariate} scalar trigonometric polynomials. Consequently, the exploited MGM theory is that for structured {unilevel or multilevel}  scalar matrices \cite{AD,ADS}. 
When  discretizing a PDE  with higher order approximation techniques}, the coefficient matrix - or its blocks - possess a (multilevel)  block structure. 

In this paper we focus on saddle-point problems where the blocks of the coefficient matrix have a (hidden) multilevel block structure. Consequently, we combine the method given in \cite{MR3439215} with the recent results on TGM convergence analysis for block structured matrices \cite{block_multigrid2,DFFSS}. Moreover, we show that the structure can be preserved on the coarse level, allowing for a recursive
application of the method, after proper symmetrization. The global algorithm is then a 3-step procedure whose main body is formed by a symbol based block multigrid method. 

The outline of the paper is as follows.  In Subsections {\ref{ssec:intr}-- \ref{ssec:saddle_uni_scalar} we provide the notation and the necessary results on multigrid methods for both saddle-point and symmetric definite positive (SPD) matrices. Regarding SPD matrices} we focus especially on the case where the multigrid is applied to Toepliz matrices with {both scalar or matrix-valued generating functions. At the end of the Section \ref{sec:intr} we report the main results of the TGM when applied to saddle-point problems.}
In Section \ref{sec:saddle_multi_block}
 we provide TGM convergence results for saddle-point systems with blocks that are multilevel block structured matrices.
In Section \ref{sec:stokes} we introduce the Stokes problem in 2D and we focus on a Finite Element approximation in Subsection \ref{ssec:Q1_iso_Q1_matthias}. The discrete problem involves solving a saddle-point linear system with block multilevel Toeplitz blocks. We theoretically show that such a problem is within the setting and fulfils the assumptions of the TGM convergence results described in Section \ref{sec:saddle_multi_block}.  In Section \ref{sec:procedure}, we introduce an iterative 3-step procedure that can be exploited for solving recursively linear systems with structure {described as in Subsection \ref{ssec:Q1_iso_Q1_matthias}. Precisely, we focus on the structure and on the spectral features of the involved matrix sequences in order to prove the efficiency of the multigrid strategy with the proposed projection operators given in Subsection \ref{ssec:smoth_proj_mgm}. Finally, we conclude with Section \ref{sec:numerical} presenting selected numerical tests that confirm the efficiency of the proposed multigrid method in comparison with a state-of-the-art multigrid strategy for saddle-point structured problems.

\subsection{Notation and preliminary results}\label{ssec:intr}
This subsection is devoted to fix the notation and present the key concepts on multigrid methods and Toeplitz matrices.

\label{sec:introduction:tep_circ}
We report the main definitions and properties of Toeplitz and circulant matrices generated by a function $f$. The domain of $f$ can be either one-dimensional $[-\pi,\pi]$ or $d$-dimensional $[-\pi,\pi]^d$ and this gives either unilevel or $d$-level structured matrices, respectively. The co-domain of $f$ can be either the complex field or the linear space of $d \times d$ complex matrices, generating scalar or block structured matrices, respectively. We will denote the generating function by bold $\mathbf{f}$ when it is a matrix-valued function. When we want to highlight that the generating function is multivariate, we specify its argument $\mathbf{f}(\boldsymbol{\theta})$ in bold, where $\boldsymbol{\theta}=(\theta_1,\theta_2, \dots, \theta_d)\in [-\pi,\pi]^d$. The structure and the size of the Toeplitz matrix is clearly related to its generating function.

In the whole paper we adopt the following notation on norms and relations between Hermitian Positive Definite (HPD) matrices. If $X\in\mathbb{C}^{N\times N}$ is a HPD matrix, then
$\|\cdot\|_{X}=\|X^{1/2}\cdot\|_{2}$ denotes the Euclidean norm weighted by $X$ on $\mathbb{C}^{N}$. 
If $X$ and $Y$ are Hermitian matrices, then
the notation $X\leq Y$ means that $Y-X$ is a nonnegative definite matrix. Finally, given a matrix-valued function $\mathbf{f}$ defined on the cube $[-\pi,\pi]^m$ we denote by $ \|\mathbf{f}\|_{\infty}={\rm ess\,sup}_{\boldsymbol{\theta}\in [-\pi,\pi]^m}\|\mathbf{f}(\boldsymbol{\theta})\|_2$.

 In what follows, we report the formal definition of Toeplitz and circulant matrices.


\begin{Definition}\label{def:toeplitz_block_generating}
Given a Lebesgue integrable function $\mathbf{f}:[-\pi,\pi]^{d}\rightarrow \mathbb{C}^{s\times s}$, its Fourier coefficients are given by
\begin{linenomath*} 
\begin{equation*}
  \hat{\mathbf{f}}_{\mathbf{k}} =
  \frac1{(2\pi)^d}
  \int_{[-\pi,\pi]^d}\mathbf{f}(\boldsymbol{\theta})\E^{-\iota\left\langle {\mathbf{k}},\boldsymbol{\theta}\right\rangle}\mathrm{d}\boldsymbol{\theta}\in\mathbb{C}^{s\times s},
  \qquad \mathbf{k}=(k_1,\ldots,k_d)\in\mathbb{Z}^d,\label{fhat}
\end{equation*}\end{linenomath*}
where $\boldsymbol{\theta}=(\theta_1,\ldots,\theta_d)$, $\left\langle \mathbf{k},\boldsymbol{\theta}\right\rangle=\sum_{i=1}^dk_i\theta_i$, and the integrals of matrices are computed elementwise.

The $\mathbf{n}$th Toeplitz matrix, where $\mathbf{n}=(n_1,\ldots,n_d)$, associated with $\mathbf{f}$ is the matrix of dimension $s \cdot n_1\cdot\ldots\cdot n_d$ given by
\begin{linenomath*} 
\begin{equation*}
T_\mathbf{n}(\mathbf{f})=
\sum_{\mathbf{1}-\mathbf{n}\le \mathbf{k}\le \mathbf{n}-\mathbf{1}} J_{n_1}^{k_1} \otimes \cdots\otimes J_{n_d}^{k_d}\otimes \hat{\mathbf{f}}_{\mathbf{k}},
\end{equation*}\end{linenomath*}
where $\mathbf{1}$ is the vector of all ones, $ J^{k_{\xi}}_{n_\xi}$  is the $n_{\xi} \times n_{\xi}$ matrix whose $(i,h)$th entry equals 1 if $(i-h)=k_{\xi}$ and $0$ otherwise, and where $\mathbf{s}\le \mathbf{t}$ means that $s_j\le t_j$ for any $j=1,\ldots,d$.

The $\mathbf{n}$th circulant matrix associated with $\mathbf{f}$ is the matrix of dimension $s \cdot n_1\cdot\ldots\cdot n_d$ given by
\begin{linenomath*} 
\begin{equation*}
\mathcal{C}_\mathbf{n}(\mathbf{f})=
\sum_{\mathbf{1}-\mathbf{n}\le \mathbf{k}\le \mathbf{n}-\mathbf{1}} Z_{n_1}^{k_1} \otimes \cdots\otimes Z_{n_d}^{k_d}\otimes \hat{\mathbf{f}}_{\mathbf{k}},
\end{equation*}\end{linenomath*}
where $Z^{k_{\xi}}_{n_\xi}$  is the $n_{\xi} \times n_{\xi}$ matrix whose $(i,h)$th entry equals 1 if $(i-h) \mod n_{\xi}=k_{\xi}$ and $0$ otherwise.
\end{Definition}

\begin{Theorem}[\cite{davis}]\label{circ-s}
Let $\textbf{f}\in L^1([-\pi,\pi]^d,s)$ be a matrix-valued function with $d\ge 1,s\ge 2$. Then, the following (block-Schur) decomposition of
$\mathcal{C}_{\bf n}(\textbf{f})$ is valid:
\begin{equation}
\label{schur-s}
\mathcal{C}_{\bf n}(\textbf{f})=(F_{\bf n}\otimes I_s) D_{\bf n}(\textbf{f}) (F_{\bf n}\otimes I_s)^H,
\end{equation}
where
\begin{equation}\label{eig-circ-s}
  D_{\bf n}(\textbf{f}) =\underset{{ {\bf 0}\le {\bf k}\le {\bf n}-{\bf 1}}}{{\rm diag}}\left(S_{\bf n}(\textbf{f})\left(\theta_{\bf k}^{({\bf n})}\right)\right),
\end{equation}
$F_{\bf n}$ is the d-level Fourier matrix, $F_{\bf n} = F_{n_1} \otimes\dots \otimes F_{n_d}$, $I_s$ is the identity matrix of size $s$
 and
\begin{equation}
\label{eq:grid}
\theta_{\bf k}^{({\bf n})}=2\pi\frac{ {\bf k}}{{\bf n}}, \quad  { {\bf k}={\bf 0}, \dots, {\bf n}-{\bf 1}}.
\end{equation}
Moreover, $S_{\bf n}(\textbf{f})(\cdot)$ is the $\bf n$-th Fourier sum of $\textbf{f}$ given by
\begin{equation}\label{fourier-sum-s}
S_{\bf n}(f)(\boldsymbol{\theta}) = \sum_{j_1=1-n_1}^{n_1-1} \cdots  \sum_{j_d=1-n_d}^{n_d-1} \hat f_{\bf j}
{\rm e}^{{\iota}\left\langle {\bf j},\boldsymbol{\theta}\right\rangle}, \ \ \ \  \left\langle {\bf j},\theta\right\rangle=\sum_{t=1}^d  j_t \theta_t.
\end{equation}
\end{Theorem}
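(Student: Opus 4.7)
The plan is to reduce the claim to the scalar spectral factorization of the cyclic shift, combine the pieces via the mixed-product rule for Kronecker products, and finally recognize the diagonal entries as the Fourier sum $S_{\bf n}(\mathbf{f})$ evaluated on the uniform grid~\eqref{eq:grid}.

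First, I would recall the elementary identity
\begin{equation*}
Z_{n_\xi} = F_{n_\xi}\,\Omega_{n_\xi}\,F_{n_\xi}^H,\qquad \Omega_{n_\xi} = \diag_{0\le j<n_\xi}\!\bigl(\E^{\iota 2\pi j/n_\xi}\bigr),
\end{equation*}
which yields $Z_{n_\xi}^{k_\xi}=F_{n_\xi}\,\Omega_{n_\xi}^{k_\xi}\,F_{n_\xi}^H$ for every integer $k_\xi$. Substituting these into the defining sum of $\mathcal{C}_{\bf n}(\mathbf{f})$ given in Definition \ref{def:toeplitz_block_generating}, and writing each Fourier coefficient as $\hat{\mathbf{f}}_{\mathbf{k}} = I_s\,\hat{\mathbf{f}}_{\mathbf{k}}\,I_s$, the mixed-product rule gives, for each multi-index $\mathbf{k}$,
\begin{equation*}
Z_{n_1}^{k_1}\otimes\cdots\otimes Z_{n_d}^{k_d}\otimes\hat{\mathbf{f}}_{\mathbf{k}} \;=\; (F_{\bf n}\otimes I_s)\bigl(\Omega_{n_1}^{k_1}\otimes\cdots\otimes\Omega_{n_d}^{k_d}\otimes\hat{\mathbf{f}}_{\mathbf{k}}\bigr)(F_{\bf n}\otimes I_s)^H.
\end{equation*}

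Next, I would sum over $\mathbf{1}-\mathbf{n}\le\mathbf{k}\le\mathbf{n}-\mathbf{1}$. Since the outer factors $F_{\bf n}\otimes I_s$ do not depend on $\mathbf{k}$, they pull out of the sum, and it remains to check that
\begin{equation*}
\sum_{\mathbf{k}} \Omega_{n_1}^{k_1}\otimes\cdots\otimes\Omega_{n_d}^{k_d}\otimes\hat{\mathbf{f}}_{\mathbf{k}} \;=\; D_{\bf n}(\mathbf{f}).
\end{equation*}
Because each $\Omega_{n_\xi}^{k_\xi}$ is scalar diagonal, the Kronecker product of these factors is diagonal, and so the whole tensor is block diagonal with $s\times s$ blocks. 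Its $\mathbf{j}$-th block equals $\sum_{\mathbf{k}} \bigl(\prod_\xi \E^{\iota 2\pi j_\xi k_\xi/n_\xi}\bigr)\hat{\mathbf{f}}_{\mathbf{k}} = \sum_{\mathbf{k}} \E^{\iota\langle\mathbf{k},\theta_{\bf j}^{({\bf n})}\rangle}\hat{\mathbf{f}}_{\mathbf{k}}$, which by \eqref{fourier-sum-s} is precisely $S_{\bf n}(\mathbf{f})\!\left(\theta_{\bf j}^{({\bf n})}\right)$, matching \eqref{eig-circ-s}.

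The only real obstacle I foresee is purely bookkeeping: keeping the Kronecker-product ordering consistent between the spatial and the matrix-valued slots, and making sure that the lexicographic enumeration of the block index $\mathbf{j}$ arising from $F_{\bf n}=F_{n_1}\otimes\cdots\otimes F_{n_d}$ lines up with the diagonal ordering used in the definition of $D_{\bf n}(\mathbf{f})$. Once the convention placing the $s\times s$ slot last is fixed, no analytical difficulty remains; the proof is a verification.
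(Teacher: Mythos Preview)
Your proof is correct and is the standard argument one finds in the cited reference \cite{davis}: diagonalize each cyclic shift $Z_{n_\xi}$ by the DFT matrix, pull the common factors $F_{\bf n}\otimes I_s$ out of the sum via the mixed-product rule, and read off the block-diagonal entries as the truncated Fourier sum on the grid. Note, however, that the paper does not supply its own proof of this theorem---it is stated as a known result with a citation to \cite{davis}---so there is nothing in the paper to compare against beyond confirming that your argument matches the classical one.
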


\begin{Remark}\label{rmk:fourier-vs-funzione}
The latter results states that the eigenvalues of $\mathcal{C}_{\bf n}(\textbf{f})$ are given by the evaluations of $\lambda_t(S_{\bf n}(\textbf{f})(\cdot))$, $t=1,\ldots,s$, at the grid points \eqref{eq:grid}.

Moreover, if $\textbf{f}$ is a trigonometric polynomial of fixed degree (with respect to $\bf n$), then it is worth noticing that
 $S_{\bf n}(\textbf{f})(\cdot) = \textbf{f}(\cdot)$ for $\bf n$ large enough. Therefore, in such a setting, the eigenvalues of $\mathcal{C}_{\bf n}(\textbf{f})$ are the evaluations of $\lambda_t(\textbf{f}(\cdot))$, $t=1,\ldots,s$, at the very same grid points. The latter implies that if $\textbf{f}\ge 0$ trigonometric polynomial and $\lambda_{\min}(\textbf{f})$ does not vanish at any of the grid points (\ref{eq:grid}), then $\mathcal{C}_{\bf n}(\textbf{f})$ is HPD.
\end{Remark}


\begin{Remark}\label{rmk:rectangular_Cn}
	In Definition \ref{def:toeplitz_block_generating}, we considered square matrix-valued functions. In the general case where $\mathbf{f}:[-\pi,\pi]^{d}\rightarrow \mathbb{C}^{s_1\times s_2}$, when we write $\mathcal{C}_\mathbf{n}(\mathbf{f})$, we mean that 
	\begin{equation}\label{eq:rectangular_Cn}
		\mathcal{C}_\mathbf{n}(\mathbf{f})=\boldsymbol{\Pi}^T_{s_1n^d\times s_1n^d}\begin{bmatrix}
\mathcal{C}_{\mathbf{n}}({f}^{(1,1)}) &  \mathcal{C}_{\mathbf{n}}({f}^{(1,2)}) &\dots & \mathcal{C}_{\mathbf{n}}({f}^{(1,s_2)})\\
\mathcal{C}_{\mathbf{n}}({f}^{(2,1)}) & \mathcal{C}_{\mathbf{n}}({f}^{(2,2)}) & \dots & \mathcal{C}_{\mathbf{n}}({f}^{(2,s_2)}) \\
\vdots& &\ddots& \vdots\\
\mathcal{C}_{\mathbf{n}}({f}^{(s_1,1)}) &  \mathcal{C}_{\mathbf{n}}({f}^{(s_1,2)}) &\dots & \mathcal{C}_{\mathbf{n}}({f}^{(s_1,s_2)})
\end{bmatrix} \boldsymbol{\Pi}_{s_2n^d\times s_2n^d},
	\end{equation}
where ${f}^{(j,k)}$ is the $(j,k)$th component of the function $\mathbf{f}$ and, for $s\in\{s_1,s_2\}$, the permutation matrices $\boldsymbol{\Pi}_{sn^d\times sn^d}$ are defined by
\begin{equation}
	\boldsymbol{\Pi}_{sn^d\times sn^d} = [\Pi_1|\Pi_2|\dots|\Pi_{n^d}],
\end{equation}
with
\[
		[\Pi_k]_{i,j} = \begin{cases}
							1 & \mbox{if } i = (j-1)n^d+k,\\
							0 & \mbox{otherwise},
						\end{cases}, \qquad
		i = 1, \dots, sn^d, \quad j = 1, \dots, s.
\]

This approach is consistent with Definition \ref{def:toeplitz_block_generating}, however, we avoided to include non-square matrices in the definition of circulants because in the next sections we will exploit the circulant algebra structure, which is natural in the square case.
\end{Remark}

\begin{Remark}\label{rmk:rectang_Cn_prod}
	In the non-square case it is still possible to exploit symbols to perform products between circulant matrices as follows.
	Suppose $\mathbf{f}:[-\pi,\pi]^{d}\rightarrow \mathbb{C}^{s_1\times s_2}$, $\mathbf{g}:[-\pi,\pi]^{d}\rightarrow \mathbb{C}^{s_2\times s_3}$ and denote the components of the product $\mathbf{f}\mathbf{g}$ with ${h}^{(i,j)}$, then
	\begin{align*}
		\mathcal{C}_\mathbf{n}(\mathbf{f})\mathcal{C}_\mathbf{n}(\mathbf{g})  =&
		\boldsymbol{\Pi}^T_{s_1n^d\times s_1n^d}\begin{bmatrix} \mathcal{C}_{\mathbf{n}}({f}^{(i,j)}) \end{bmatrix}_{\substack{i=1,\dots,s_1 \\ j = 1,\dots,s_2}}\boldsymbol{\Pi}_{s_2n^d\times s_2n^d} \cdot\\
		&\cdot\boldsymbol{\Pi}^T_{s_2n^d\times s_2n^d}\begin{bmatrix} \mathcal{C}_{\mathbf{n}}({g}^{(i,j)}) \end{bmatrix}_{\substack{i=1,\dots,s_1 \\ j = 1,\dots,s_2}}\boldsymbol{\Pi}_{s_3n^d\times s_3n^d}\\
		=&\boldsymbol{\Pi}^T_{s_1n^d\times s_1n^d}\begin{bmatrix} \mathcal{C}_{\mathbf{n}}({f}^{(i,j)}) \end{bmatrix}_{\substack{i=1,\dots,s_1 \\ j = 1,\dots,s_2}}\begin{bmatrix} \mathcal{C}_{\mathbf{n}}({g}^{(i,j)}) \end{bmatrix}_{\substack{i=1,\dots,s_1 \\ j = 1,\dots,s_2}}\boldsymbol{\Pi}_{s_3n^d\times s_3n^d}\\
		=&\boldsymbol{\Pi}^T_{s_1n^d\times s_1n^d}\begin{bmatrix} \mathcal{C}_{\mathbf{n}}({h}^{(i,j)}) \end{bmatrix}_{\substack{i=1,\dots,s_1 \\ j = 1,\dots,s_2}}\boldsymbol{\Pi}_{s_3n^d\times s_3n^d},\\
		=&\mathcal{C}_\mathbf{n}(\mathbf{f}\mathbf{g}).
	\end{align*}
\end{Remark}

The generating function plays an important role for investigating some analytic properties of the related Toeplitz matrix. Under particular hypotheses, $f$ can provide important spectral information on $T_n(f)$. In this case we say that $f$ is the \textit{spectral symbol} of the matrix sequence $\{T_n(f)\}_n$. In the following, we report an important localization result on the eigenvalues of $T_{\bf n}(\mathbf{f})$ generated by a Hermitian $d\times d$ matrix-valued function.  

\begin{Theorem}[\cite{serra1998, serra99}]\label{thm:loc-extr-s}
Let $\mathbf{f}\in L^1([-\pi,\pi]^m)$ be a Hermitian $d\times d$ matrix-valued function with $m\ge 1,d\ge 2$. Let $m_1$ be the essential infimum of 
the minimal eigenvalue of $\mathbf{f}$, $M_1$ be the essential supremum of the minimal eigenvalue of $\mathbf{f}$, $m_d$ be the essential infimum of 
the maximal eigenvalue of $\mathbf{f}$, and $M_d$ be the essential supremum of the maximal eigenvalue of $\mathbf{f}$. 
\begin{enumerate}
\item If $m_1<M_1$ then all the eigenvalues of $T_{\bf n}(\mathbf{f})$ belong to the open set $(m_1,M_d]$ for every ${\bf n}\in\mathbb N^m$. If $m_d<M_d$ then all the eigenvalues of $T_{\bf n}(\mathbf{f})$ belong to the open set $[m_1,M_d)$ for every ${\bf n}\in\mathbb N^m$.
\item If $m_1=0$ and $\boldsymbol{\theta}_0$ is the unique zero of $\lambda_{\min}(\mathbf{f})$ such that there exist positive constants $c,C,t$ for which
\[ 
c\|\boldsymbol{\theta} -\boldsymbol{\theta}_0\|^t \le \lambda_{\min}(f(\boldsymbol{\theta})) \le C \|\boldsymbol{\theta} -\boldsymbol{\theta}_0\|^t,
\]
then the minimal eigenvalue of $T_{\bf n}(\mathbf{f})$ goes to zero as $(n_1n_2\dots n_m)^{-t/m}$.
\end{enumerate}
\end{Theorem}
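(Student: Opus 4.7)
The plan is to derive both items from the basic quadratic-form identity for block Toeplitz matrices. Writing a vector $\mathbf{v}\in\mathbb{C}^{d\,n_{1}\cdots n_{m}}$ as blocks $\mathbf{v}_{\mathbf{k}}\in\mathbb{C}^{d}$, $\mathbf{0}\le\mathbf{k}\le\mathbf{n}-\mathbf{1}$, and associating the vector-valued trigonometric polynomial $\mathbf{p}_{\mathbf{v}}(\boldsymbol{\theta})=\sum_{\mathbf{k}}\mathbf{v}_{\mathbf{k}}\,\E^{\iota\langle\mathbf{k},\boldsymbol{\theta}\rangle}$, a short computation using the Fourier coefficients $\hat{\mathbf{f}}_{\mathbf{k}}$ in Definition~\ref{def:toeplitz_block_generating} gives
\[
\mathbf{v}^{H}T_{\mathbf{n}}(\mathbf{f})\mathbf{v}=\frac{1}{(2\pi)^{m}}\int_{[-\pi,\pi]^{m}}\mathbf{p}_{\mathbf{v}}(\boldsymbol{\theta})^{H}\mathbf{f}(\boldsymbol{\theta})\mathbf{p}_{\mathbf{v}}(\boldsymbol{\theta})\,{\rm d}\boldsymbol{\theta},\qquad \|\mathbf{v}\|_{2}^{2}=\frac{1}{(2\pi)^{m}}\int_{[-\pi,\pi]^{m}}\|\mathbf{p}_{\mathbf{v}}(\boldsymbol{\theta})\|_{2}^{2}\,{\rm d}\boldsymbol{\theta}.
\]
Every eigenvalue estimate I need reduces to a pointwise comparison between $\mathbf{f}(\boldsymbol{\theta})$ and these scalar weights, processed through this identity.

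For item (1), I would first apply the pointwise Rayleigh bound $\lambda_{\min}(\mathbf{f}(\boldsymbol{\theta}))\|\mathbf{p}_{\mathbf{v}}\|_{2}^{2}\le\mathbf{p}_{\mathbf{v}}^{H}\mathbf{f}\mathbf{p}_{\mathbf{v}}\le\lambda_{\max}(\mathbf{f}(\boldsymbol{\theta}))\|\mathbf{p}_{\mathbf{v}}\|_{2}^{2}$, yielding the enclosure $m_{1}\|\mathbf{v}\|_{2}^{2}\le\mathbf{v}^{H}T_{\mathbf{n}}(\mathbf{f})\mathbf{v}\le M_{d}\|\mathbf{v}\|_{2}^{2}$. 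To sharpen the left endpoint when $m_{1}<M_{1}$, I would use that $E=\{\boldsymbol{\theta}:\lambda_{\min}(\mathbf{f}(\boldsymbol{\theta}))>m_{1}\}$ has positive Lebesgue measure by the definition of essential supremum, while any nonzero vector trigonometric polynomial $\mathbf{p}_{\mathbf{v}}$ can vanish only on a set of measure zero. Consequently $\mathbf{p}_{\mathbf{v}}^{H}(\mathbf{f}-m_{1}I)\mathbf{p}_{\mathbf{v}}$ is nonnegative a.e.\ and strictly positive on a set of positive measure, giving $\mathbf{v}^{H}T_{\mathbf{n}}(\mathbf{f})\mathbf{v}>m_{1}\|\mathbf{v}\|_{2}^{2}$ for every $\mathbf{v}\neq\mathbf{0}$. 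The symmetric argument applied to $\mathbf{p}_{\mathbf{v}}^{H}(M_{d}I-\mathbf{f})\mathbf{p}_{\mathbf{v}}$ under $m_{d}<M_{d}$ handles the right endpoint.

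For item (2), the upper bound $\lambda_{\min}(T_{\mathbf{n}}(\mathbf{f}))\le C\,N^{-t/m}$, with $N=n_{1}\cdots n_{m}$, follows from a standard localization trick: pick a unit $\mathbf{u}\in\ker\mathbf{f}(\boldsymbol{\theta}_{0})$ and a scalar trigonometric polynomial $g$ of coordinate degrees bounded by $\mathbf{n}-\mathbf{1}$ concentrated in a cube of side $\sim N^{-1/m}$ around $\boldsymbol{\theta}_{0}$ (e.g.\ a tensor product of Fej\'er-type kernels), and set $\mathbf{v}_{\mathbf{k}}=\hat g_{\mathbf{k}}\mathbf{u}$ so that $\mathbf{p}_{\mathbf{v}}(\boldsymbol{\theta})=g(\boldsymbol{\theta})\mathbf{u}$. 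First-order perturbation theory around the simple eigenvalue $0$ of $\mathbf{f}(\boldsymbol{\theta}_{0})$ gives $\mathbf{u}^{H}\mathbf{f}(\boldsymbol{\theta})\mathbf{u}\le C'\|\boldsymbol{\theta}-\boldsymbol{\theta}_{0}\|^{t}$ on a neighbourhood of $\boldsymbol{\theta}_{0}$, and inserting this into the identity produces a Rayleigh quotient of order $N^{-t/m}$.

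The hard part will be the matching lower bound $\lambda_{\min}(T_{\mathbf{n}}(\mathbf{f}))\ge c\,N^{-t/m}$. From the hypothesis $\lambda_{\min}(\mathbf{f}(\boldsymbol{\theta}))\ge c\|\boldsymbol{\theta}-\boldsymbol{\theta}_{0}\|^{t}$ the identity reduces the question to the weighted trigonometric inequality
\[
\int_{[-\pi,\pi]^{m}}\|\boldsymbol{\theta}-\boldsymbol{\theta}_{0}\|^{t}\|\mathbf{p}_{\mathbf{v}}(\boldsymbol{\theta})\|_{2}^{2}\,{\rm d}\boldsymbol{\theta}\;\ge\;c'\,N^{-t/m}\int_{[-\pi,\pi]^{m}}\|\mathbf{p}_{\mathbf{v}}(\boldsymbol{\theta})\|_{2}^{2}\,{\rm d}\boldsymbol{\theta},
\]
which is a Bernstein/uncertainty-type statement: a vector trigonometric polynomial with coordinate multidegree less than $\mathbf{n}$ cannot concentrate more than a bounded fraction of its $L^{2}$ mass in a cube of side smaller than $\sim N^{-1/m}$ around any single point. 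I would prove it by splitting the integration into the inner ball of radius $\sim N^{-1/m}$ around $\boldsymbol{\theta}_{0}$, where the $L^{2}$ mass is controlled via an $L^{\infty}$ Bernstein estimate for polynomials of degree $<\mathbf{n}$, and its complement, where the weight is already at least $c\,N^{-t/m}$, and then balancing the two contributions. This splitting-and-balancing is the technical core borrowed from \cite{serra1998,serra99}, and combined with the matching upper bound it yields the claimed decay rate.
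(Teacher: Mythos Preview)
The paper does not contain a proof of Theorem~\ref{thm:loc-extr-s}: it is quoted from \cite{serra1998,serra99} without argument and used only as a black box (in Proposition~\ref{prop:fA}), so there is nothing in the paper to compare your attempt against. Your sketch is in fact the standard route taken in those references: the quadratic-form identity, the measure-zero vanishing of nonzero trigonometric polynomials for item~(1), and for item~(2) a localized test vector for the upper bound together with a weighted $L^{2}$ (Bernstein/concentration) inequality for the lower bound.

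There is, however, one genuine gap in your upper-bound step for item~(2). The claim ``first-order perturbation theory around the simple eigenvalue $0$ gives $\mathbf{u}^{H}\mathbf{f}(\boldsymbol{\theta})\mathbf{u}\le C'\|\boldsymbol{\theta}-\boldsymbol{\theta}_{0}\|^{t}$'' is not justified by the hypotheses. First, nothing in the statement says the zero eigenvalue of $\mathbf{f}(\boldsymbol{\theta}_{0})$ is simple. Second, even granting simplicity and smoothness of $\mathbf{f}$, for a \emph{fixed} unit vector $\mathbf{u}=\mathbf{u}(\boldsymbol{\theta}_{0})$ one only gets $\mathbf{u}^{H}\mathbf{f}(\boldsymbol{\theta})\mathbf{u}=\lambda_{\min}(\mathbf{f}(\boldsymbol{\theta}))+O(\|\boldsymbol{\theta}-\boldsymbol{\theta}_{0}\|^{2})$, the quadratic remainder coming from the rotation of the eigenvector. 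When $t>2$ this remainder dominates, and your test vector $g(\boldsymbol{\theta})\mathbf{u}$ produces a Rayleigh quotient of order $N^{-2/m}$ rather than $N^{-t/m}$. The cited papers close this by either building the test vector from a polynomial approximation of the $\boldsymbol{\theta}$-dependent eigenvector (so that the cross terms are pushed to higher order), or by reducing directly to the scalar symbol $\lambda_{\min}(\mathbf{f})$ via $\mathbf{f}\ge\lambda_{\min}(\mathbf{f})I_{d}$ on the lower side and a matching extremal argument on the upper side. For the application actually made in this paper ($t=2$, Proposition~\ref{prop:fA}) your fixed-$\mathbf{u}$ construction happens to be sharp, but as a proof of the theorem as stated it is incomplete.
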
 
\subsection{Multigrid methods}\label{ssec:multigrid}
In this subsection we briefly report the relevant results concerning the convergence theory of
 algebraic multigrid  methods \cite{RStub}  especially when the coefficient matrix is a  block-Toeplitz  matrix  generated  by  a matrix-valued trigonometric polynomial \cite{block_multigrid2}. 
 
In the general case, we are interested in solving a linear system $A_{N_0}x_{N_{0}}=b_{N_{0}}$  where $A_{N_0}$ is a Hermitian positive definite matrix. Assume $N_0>N_1>\dots >N_{\ell}>\dots > N_{\ell_{\min}}$ and define two sequences of full-rank rectangular matrices $P_{N_{\ell},N_{\ell+1}}\in \mathbb{C}^{N_{\ell}\times N_{\ell+1}}$ and $R_{N_{\ell+1},N_{\ell}}\in \mathbb{C}^{N_{\ell+1}\times N_{\ell}}$. 
   Let us consider the stationary iterative methods $\mathcal{V}_{N_\ell,\rm{pre}}$, with iteration matrix ${V}_{N_\ell,\rm{pre}}$, and $\mathcal{V}_{N_\ell,\rm{post}}$, with iteration matrix ${V}_{N_\ell,\rm{post}}$.

An iteration of an algebraic Multigrid Method (MGM) is given by Algorithm~\ref{alg:mgm}.

 \begin{small}
\begin{algorithm}
	\caption{MGM$(A_{N_{\ell}},\mathcal{V}_{N_{\ell},\rm{pre}}^{\nu_{\rm{pre}}},\mathcal{V}_{N_{\ell},\rm{post}}^{\nu_{\rm{post}}},R_{N_{\ell+1},N_{\ell}},P_{N_{\ell},N_{\ell+1}},
	b_{N_{\ell}},x_{N_{\ell}}^{(j)},\ell)$}
	\label{alg:mgm}
	\begin{algorithmic}
\begin{small}
		\STATE{ 0. \textbf{if} $\ell= \ell_{\min}$}
	\STATE{1. $\,\quad {\rm \mathbf{solve}} \, A_{N_{\ell}} x_{N_{\ell}}^{(j+1)}=b_{N_{\ell}}$}
	\STATE{ 2. \textbf{else}}
		\STATE{ 3. $ \,\quad		
		\tilde{x}_{N_{\ell}}=\mathcal{V}_{N_{\ell},\rm{pre}}^{\nu_{\rm{pre}}}(A_{N_{\ell}},{b}_{N_{\ell}},x_{N_{\ell}}^{(j)})$}
		\STATE{ 4. $ \,\quad	r_{N_{\ell}}=b_{N_{\ell}}-A_{N_{\ell}}\tilde{x}_{N_{\ell}}$}
		\STATE{ 5. $\,\quad	r_{N_{\ell+1}}=R_{N_{\ell+1},N_{\ell}}r_{N_{\ell}}$}
		\STATE{ 6. $ \,\quad	A_{N_{\ell+1}}=R_{N_{\ell+1},N_{\ell}}A_{N_{\ell}}P_{N_{\ell},N_{\ell+1}}$}
\STATE{ 7. $\quad$ \textbf{for} $i=1:\gamma$}
	\STATE{8. $\,\quad \quad$ MGM$(A_{N_{\ell+1}},\mathcal{V}_{N_{\ell+1},\rm{pre}}^{\nu_{\rm{pre}}},\mathcal{V}_{N_{\ell+1},\rm{post}}^{\nu_{\rm{post}}},	R_{N_{\ell+2},N_{\ell+1}},
	P_{N_{\ell+1},N_{\ell+2}},
	b_{N_{\ell+1}},x_{N_{\ell+1}}^{(j)},\ell+1)$}
		\STATE{ 9. $\,\,\quad $\textbf{end}}			
		\STATE{ 10. $\quad	\hat{x}_{N_{\ell}}=\tilde{x}_{N_{\ell}}+P_{N_{\ell},N_{\ell+1}}y_{N_{\ell+1}}$}
		\STATE{ 11. $\quad	x_{N_{\ell}}^{(j+1)}=\mathcal{V}_{N_{\ell},\rm{post}}^{\nu_{\rm{post}}}(A_{N_{\ell}},{b}_{N_{\ell}},\hat{x}_{N_{\ell}})$}
	
	\end{small}	
	\end{algorithmic}
\end{algorithm}
\end{small}

 The steps in lines $3$ and $11$ consist, respectively, in applying $\nu_{\rm{pre}}$ times a pre-smoother and $\nu_{\rm{post}}$ times a post-smoother of the given iterative methods. 
The steps in lines $4$ to $10$ define the ``coarse grid correction''.
When $\gamma$ and $\ell_{\min}$ are equal to  1 the Algorithm   \ref{alg:mgm} represents one iteration of the Two-Grid method (TGM). In this case a linear system with coefficient matrix $A_{N_{1}}$  has to be solved. Consequently, if the coefficient matrix has a large size, $\ell_{\min}$ should be taken such that $N_{\ell_{\min}}$ becomes
small enough for solving cheaply step 1. 
In the latter situation, the choice $\gamma=1$ defines the V-cycle method and the choice $\gamma=2$ defines the W-cycle method.

The next Theorem provides the conditions which are sufficient for the convergence of the TGM in the particular case where the matrices $A_{N_{\ell+1}}$ at the coarse levels are computed according to the Galerkin approach ($R_{N_{\ell+1},N_{\ell}}=P_{N_{\ell},N_{\ell+1}}^H$). Moreover, the theorem guarantees the linear convergence of the method, which means that the number of iterations needed by the algorithm to reach a given accuracy $\epsilon$ is  bounded from above by a constant independent of the size of the problem. In this case we will say that the method is convergent and optimal.
\begin{Theorem}\label{thm:teoconv}(\cite{RStub})
	Let $A_{N_{\ell}}$ and  $V_{N_{\ell},{\rm post}}$ be defined as in the {\rm TGM} algorithm and assume that no pre-smoothing is applied $(V_{N_{\ell},\rm{pre}}^{\nu_{\rm{pre}}}=I_{N_{\ell}})$. Let us define 
\begin{small}
\begin{multline*}
{\rm TGM}(A_{N_{\ell}},I_{N_{\ell}},V_{N_{\ell},\rm{post}}^{\nu_{\rm{post}}},
P_{N_{\ell},N_{\ell+1}})\\=
V_{N_{\ell},\rm{post}}^{\nu_{\rm{post}}}
\left[I_{N_{\ell}}-P_{N_{\ell},N_{\ell+1}}\left(P_{N_{\ell},N_{\ell+1}}^{H}
A_{N_{\ell}}P_{N_{\ell},N_{\ell+1}}\right)^{-1}P_{N_{\ell},N_{\ell+1}}^{H}A_{N_{\ell}}\right],
\end{multline*}
\end{small}
the iterative matrix of the TGM. Assume that
	\begin{itemize}
	
		\item[(a)] the smoother fulfills the {\rm smoothing property}: exists  $\sigma_{\rm{post}}>0$ such that 
		\begin{small}
		\[\|V_{N_{\ell},\rm{post}}x_{N_{\ell}}\|_{A_{N_{\ell}}}^{2}\leq\|x_{N_{\ell}}\|_{A_{N_{\ell}}}^{2}-\sigma_{\rm{post}}\|x_{N_{\ell}}\|_{A_{N_{\ell}}^2}^{2},\quad \forall x_{N_{\ell}}\in\mathbb{C}^{N_{\ell}}.\]\end{small}
		\item[(b)] The grid transfer operator $P_{N_{\ell},N_{\ell+1}})$ fulfills the {\rm approximation property}: exists $ {\kappa(A_{N_\ell},P_{{N_\ell,{N_{\ell+1}}}})}>~0$ such that
		\begin{small}
		\[\min_{y\in\mathbb{C}^{N_{\ell+1}}}\|x_{N_{\ell}}-P_{N_{\ell},N_{\ell+1}}y\|_{2}^{2}\leq {\kappa(A_{N_\ell},P_{{N_\ell,{N_{\ell+1}}}})}\|x_{N_{\ell}}\|_{A_{N_{\ell}}}^{2},\quad \forall x_{N_{\ell}}\in\mathbb{C}^{N_{\ell}}.\]
	\end{small}
	\end{itemize}
	Then $ {\kappa(A_{N_\ell},P_{{N_\ell,{N_{\ell+1}}}})}\geq\sigma_{\rm{post}}$ and
\begin{small}
	\begin{align*}
	\|{\rm TGM}(A_{N_{\ell}},I_{N_{\ell}},V_{N_{\ell},\rm{post}}^{\nu_{\rm{post}}},P_{N_{\ell},N_{\ell+1}})\|_{A_{N_{\ell}}}\leq\sqrt{1-\sigma_{\rm{post}}/ {\kappa(A_{N_\ell},P_{{N_\ell,{N_{\ell+1}}}})}}.
	\end{align*}
\end{small}
\end{Theorem}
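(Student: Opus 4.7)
The plan is to follow the classical Ruge--Stüben argument, organized around the three ingredients present in the statement: the structure of the coarse grid correction, the approximation property (b), and the smoothing property (a). I would first write the coarse grid correction operator
\[
\mathrm{CGC}_{N_\ell} := I_{N_\ell} - P_{N_\ell,N_{\ell+1}}\bigl(P_{N_\ell,N_{\ell+1}}^H A_{N_\ell} P_{N_\ell,N_{\ell+1}}\bigr)^{-1} P_{N_\ell,N_{\ell+1}}^H A_{N_\ell}
\]
and verify directly that it is idempotent, that $A_{N_\ell}\mathrm{CGC}_{N_\ell} = \mathrm{CGC}_{N_\ell}^H A_{N_\ell}$, and that $P_{N_\ell,N_{\ell+1}}^H A_{N_\ell}\mathrm{CGC}_{N_\ell} = 0$. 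These identities together say that $\mathrm{CGC}_{N_\ell}$ is the $A_{N_\ell}$-orthogonal projector onto the $A_{N_\ell}$-orthogonal complement of $\mathrm{range}(P_{N_\ell,N_{\ell+1}})$, and in particular $\|\mathrm{CGC}_{N_\ell}\|_{A_{N_\ell}}\leq 1$.

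The heart of the argument is to convert the Euclidean-norm bound (b) into an $A_{N_\ell}$-vs-$A_{N_\ell}^2$ inequality on the range of $\mathrm{CGC}_{N_\ell}$. Setting $u := \mathrm{CGC}_{N_\ell} x$, the identity $P_{N_\ell,N_{\ell+1}}^H A_{N_\ell} u = 0$ from the previous step lets me subtract an arbitrary coarse correction and then invoke Cauchy--Schwarz in the standard inner product: for every $y$,
\[
\|u\|_{A_{N_\ell}}^2 = \langle A_{N_\ell} u,\, u - P_{N_\ell,N_{\ell+1}} y \rangle \leq \|A_{N_\ell} u\|_2\,\|u - P_{N_\ell,N_{\ell+1}} y\|_2.
\]
Minimizing over $y$ and applying (b) to $u$ delivers, after a rearrangement, the pivotal bound
\[
\|\mathrm{CGC}_{N_\ell} x\|_{A_{N_\ell}}^2 \leq \gamma\,\|\mathrm{CGC}_{N_\ell} x\|_{A_{N_\ell}^2}^2.
\]
Testing this inequality against the smoothing property (a) on any nonzero vector in the range of $\mathrm{CGC}_{N_\ell}$ (on which (a) forces $\sigma_{\rm post}\|\cdot\|_{A_{N_\ell}^2}^2\le\|\cdot\|_{A_{N_\ell}}^2$) immediately produces the relation $\sigma_{\rm post}\leq\gamma$.

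To conclude, I would apply (a) to $u = \mathrm{CGC}_{N_\ell} x$ and chain with the pivotal bound and the projector contractivity of the first step to obtain
\[
\|V_{N_\ell,\rm post}\, \mathrm{CGC}_{N_\ell} x\|_{A_{N_\ell}}^2 \leq \bigl(1 - \sigma_{\rm post}/\gamma\bigr)\,\|x\|_{A_{N_\ell}}^2.
\]
Each additional post-smoothing sweep is itself an $A_{N_\ell}$-contraction thanks to (a), so the same bound transfers verbatim to $V_{N_\ell,\rm post}^{\nu_{\rm post}}$, and the theorem follows after a square root. The delicate point of the argument is the second paragraph: property (b) is stated in the Euclidean norm, while the iteration error must be measured in $\|\cdot\|_{A_{N_\ell}}$. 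The identity $P_{N_\ell,N_{\ell+1}}^H A_{N_\ell}\mathrm{CGC}_{N_\ell} = 0$ is the bridge that allows Cauchy--Schwarz to convert a Euclidean defect into the weighted ratio $\|\cdot\|_{A_{N_\ell}}^2/\|\cdot\|_{A_{N_\ell}^2}^2$ at no cost; once this is in place, the remainder of the proof is bookkeeping around (a) and the contractivity of $\mathrm{CGC}_{N_\ell}$.
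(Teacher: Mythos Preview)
Your argument is correct and is precisely the classical Ruge--St\"uben proof. Note that the paper does not supply its own proof of this theorem: it is quoted from \cite{RStub} as background, so there is nothing to compare against beyond observing that your write-up reproduces the standard derivation faithfully.
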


\subsection{Multigrid methods for block circulant and block Toeplitz systems}\label{ssec:multigrid_for_block}

If the coefficient matrix is of the form $\mathcal{C}_{\mathbf{n}}(\mathbf{f})$ or $T_{\mathbf{n}}(\mathbf{f})$, with $\mathbf{f}$ being a matrix-valued trigonometric polynomial, $\mathbf{f}\geq0$, sufficient conditions for the convergence and optimality of the TGM can be expressed in relation to $\mathbf{f}$. 


In particular, \cite{block_multigrid2} {provides} sufficient conditions that should be fulfilled by the grid transfer operators in order to validate the approximation property (b) of Theorem \ref{thm:teoconv}. In the following we report a brief description of the result \cite{block_multigrid2} in the general multilevel block Toeplitz setting.

 Suppose $\mathbf{f}$ is a $s\times s$ matrix-valued trigonometric polynomial, $\mathbf{f}\geq0$, such that there exist unique $\boldsymbol{\theta}_0\in[0,2\pi)^d$ and $\bar{\jmath} \in\{1,\dots,s\}$ such that 
\begin{equation}\label{eqn:condition_on_f_multi}
\left\{\begin{array}{ll}
\lambda_j(\mathbf{f}(\boldsymbol{\theta}))=0 & \mbox{for } \boldsymbol{\theta}=\boldsymbol{\theta}_0 \mbox{ and } j=\bar{\jmath}, \\
\lambda_j(\mathbf{f}(\boldsymbol{\theta}))>0 & {\rm otherwise}.
\end{array}\right.
\end{equation}
The latter means that only one eigenvalue function of $\mathbf{f}$ has exactly one zero in $ \boldsymbol{\theta}_0$ and $\mathbf{f}$ is positive definite in $[0,2\pi)^{ {d}}\backslash\{ \boldsymbol{\theta}_0\}$.
Define  $q_{\bar{\jmath}}(\boldsymbol{\theta}_0)$ as the eigenvector function of $\mathbf{f}(\boldsymbol{\theta}_0)$ associated with $\lambda_{\bar{\jmath}}(\mathbf{f}(\boldsymbol{\theta}_0))=0$. 
Moreover, define $\Omega(\boldsymbol{\theta})= \left\{\boldsymbol{\theta}+ \pi \boldsymbol{\eta}, \, \boldsymbol{\eta}\in \{0, 1\}^{ {d}} \right\}$.
Let us define a matrix-valued trigonometric polynomial  $\mathbf{p}(\cdot)$ such that
\begin{itemize}
\item \begin{equation}\label{eqn:condition_on_p_multi_intr}
	\sum_{\boldsymbol{\xi} \in \Omega(\boldsymbol{\theta}) }\mathbf{p}(\boldsymbol{\xi} )^{H}\mathbf{p}(\boldsymbol{\xi} )>0, \quad\forall\, \boldsymbol{\theta} \in[0,2\pi)^{ {d}},
\end{equation}
 which implies that the trigonometric function
\begin{equation*}
\mathbf{s}(\boldsymbol{\theta}) = \mathbf{p}(\boldsymbol{\theta})\left(\sum_{\boldsymbol{\xi}  \in \Omega(\boldsymbol{\theta}) }\mathbf{p}(\boldsymbol{\xi} )^{H}\mathbf{p}(\boldsymbol{\xi} )\right)^{-1}\mathbf{p}(\boldsymbol{\theta})^{H}
\end{equation*}
is well-defined for all $\boldsymbol{\theta} \in[0,2\pi)^{ {d}}$.
\item \begin{equation}\label{eqn:condition_on_s_multi_intr}
	\mathbf{s}(\boldsymbol{\theta} _0)q_{\bar{\jmath}}(\boldsymbol{\theta}_0) = q_{\bar{\jmath}}(\boldsymbol{\theta}_0).
\end{equation}
\item \begin{equation}\label{eqn:condition_on_s_f_multi_intr}
\limsup_{\boldsymbol{\theta} \rightarrow \boldsymbol{\theta}_0} \lambda_{\bar{\jmath}}(\mathbf{f}(\boldsymbol{\theta}))^{-1}(1-\lambda_{\bar{\jmath}}(\mathbf{s}(\boldsymbol{\theta})))=c,
\end{equation}
 where $c\in\mathbb{R}$ is a constant. 
\end{itemize}
In order to have optimal convergence of the TGM it is sufficient compute a grid transfer operator as
\begin{equation}
  P_{N_\ell,N_{\ell+1}}=T_{\mathbf{n}_\ell}(\mathbf{p}) \left(K_{N_\ell,N_{\ell+1}}\otimes I_{s}\right),
 \end{equation} 
  where $K_{N_\ell,N_{\ell+1}}$ is a $N_\ell\times N_{\ell+1}$  multilevel cutting matrix obtained as the Kronecker product of proper unilevel cutting matrices and $T_{\mathbf{n}_\ell}(\mathbf{p})$ is a multilevel block-Toeplitz matrix generated by $\mathbf{p}$.
{
\subsection{TGM for saddle-point systems}
\label{ssec:saddle_uni_scalar}
In the previous section we focused on the case where the coefficient matrix is a block structured and non-negative definite matrix. For matrices in saddle-point form, R\"{u}ge-Stuben theory and Theorem \ref{thm:teoconv}
cannot be directly exploited. In \cite{MR3439215} a general TGM procedure has been presented for a system of the form
\begin{equation}\label{eq:saddle_notay}
\mathcal{A}=	\begin{bmatrix}
		A & B^T \\
		B & -C
	\end{bmatrix}
\end{equation}
where $A$ is an $n\times n$ HPD matrix, $C$ is an $\hat{n}\times \hat{n}$ non-negative definite matrix and $B$ is such that its rank is $\hat{n}\leq n$ or that $C$ is positive definite on the null space of $B^T$. 
Indeed, we have the following result.
\begin{Theorem}\cite[Theorem 4.4]{MR3439215}\label{thm:teo_notay}
Let $\mathcal{A}$ be as in (\ref{eq:saddle_notay}).
 { Define $D_A=\diag(A)$ and let} 
$\alpha$ be a positive number such that $\alpha<2(\lambda_{\max}(D_A^{-1}A))^{-1}$. {Compute} 
\begin{equation}\label{eq:LAU}
\hat{\mathcal{A}}=\mathcal{LAU}, \qquad
\mathcal{L}= \begin{bmatrix}
I_{n}\\
\alpha B D_{A}^{-1}&-I_{\hat{n}}
\end{bmatrix}, \qquad \mathcal{U}=\begin{bmatrix}
I_{n}&-\alpha D_{A}^{-1}B^T\\
&I_{\hat{n}}
\end{bmatrix},
\end{equation}
 {and define
 \begin{equation}\label{eq:Chat}
  \hat{C}=C+B(2\alpha D_A^{-1}-\alpha^2D_A^{-1}AD_A^{-1})B^T, \quad D_{\hat{C}}={\rm diag} (\hat{C}).
 \end{equation} 
 } 
Let $P_A$ and $P_{\hat{C}}$ be, respectively, $n\times k$ and $\hat{n}\times \ell$ matrices of rank $k<n$ and $\ell<\hat{n}$.  {Define the prolongation operator}

\begin{equation}\label{eq:PNotay}
\mathcal{P}=\begin{bmatrix}
P_{A}& \\
& P_{\hat{C}}
\end{bmatrix}
\end{equation}
for the global system involving $\mathcal{\hat{A}}$. Suppose that  the pairs $(A,P_A)$ and $(\hat{C}, P_{\hat{C}})$ fulfill the approximation property {  in Theorem \ref{thm:teoconv} with associated
approximation property constants  ${\kappa}({A},P_{A})$ and  ${\kappa}({\hat{C}},P_{\hat{C}})$}.
  Then, the spectral radius of the TGM iteration matrix {using} one iteration of the damped Jacobi method with relaxation parameter $\omega$ {as  post smoother} satisfies
\begin{equation}\label{eq:notay_max}
\rho({\rm TGM}(\mathcal{\hat{A}},\mathcal{P},\omega))\le \max\left(1-\frac{\omega}{{\kappa}({A},P_{A})},  \, 1-\frac{\omega}{{\kappa}({\hat{C}},P_{\hat{C}})}, \, \omega \hat{\gamma}_A-1,  \, \omega \hat{\gamma}_{\hat{C}}-1,  \, \sqrt{1-\frac{\omega(2-\omega\tilde{\gamma})}{\tilde{\kappa}}} \right),
\end{equation}
where \begin{align*}
\hat{\gamma}_{A}= \left(\alpha\left(2-\alpha\lambda_{\max}(D_{A}^{-1}A)\right)\right)^{-1},& \qquad \hat{\gamma}_{\hat{C}}= \lambda_{\max}\left(D_{\hat{C}}^{-1}(C+BA^{-1}B^T)\right),  \\
\tilde{\gamma}=\frac{2\hat{\gamma}_{A}\hat{\gamma}_{\hat{C}}}{\hat{\gamma}_{A}+\hat{\gamma}_{\hat{C}}},& \qquad \tilde{\kappa}=\frac{2{\kappa}({A},P_{A}){\kappa}({\hat{C}},P_{\hat{C}})}{{\kappa}({A},P_{A})+{\kappa}({\hat{C}},P_{\hat{C}})}.
\end{align*}
\end{Theorem}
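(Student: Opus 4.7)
The strategy is to reduce the analysis of the TGM applied to the preconditioned matrix $\hat{\mathcal{A}}$ to two coupled single-block analyses---one for the diagonal block $A$ and one for the diagonal block $\hat{C}$---exploiting that the block-diagonal structure of $\mathcal{P}$ and of the Jacobi diagonal of $\hat{\mathcal{A}}$ allows standard arguments on each block, with the off-diagonal coupling controlled by the choice of $\alpha$.

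First, I would explicitly compute
\begin{equation*}
\hat{\mathcal{A}} = \mathcal{L}\mathcal{A}\mathcal{U} = \begin{bmatrix} A & (I_n - \alpha A D_A^{-1}) B^T \\ B(\alpha D_A^{-1} A - I_n) & \hat{C} \end{bmatrix},
\end{equation*}
noting that the block-Jacobi diagonal is $\hat{D} = \diag(D_A, D_{\hat{C}})$ and that the constraint $\alpha < 2/\lambda_{\max}(D_A^{-1} A)$ makes $2\alpha D_A^{-1} - \alpha^2 D_A^{-1} A D_A^{-1}$ positive definite, so that $\hat{C}$ is at least nonnegative definite (and positive definite wherever $C + B A^{-1} B^T$ is). Because $\mathcal{P}$ is block diagonal, the coarse Galerkin matrix $\mathcal{P}^H \hat{\mathcal{A}} \mathcal{P}$ has diagonal blocks $P_A^H A P_A$ and $P_{\hat{C}}^H \hat{C} P_{\hat{C}}$, both HPD by the assumed rank/range conditions, so the coarse correction $Q = I - \mathcal{P}(\mathcal{P}^H \hat{\mathcal{A}} \mathcal{P})^{-1} \mathcal{P}^H \hat{\mathcal{A}}$ is a well-defined oblique projector onto $\ker(\mathcal{P}^H \hat{\mathcal{A}})$.

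Next, writing the damped-Jacobi smoother as $M = I - \omega \hat{D}^{-1}\hat{\mathcal{A}}$, the TGM iteration matrix is $M Q$. To bound $\rho(MQ)$ I would analyze the eigenvalue equation $M Q v = \mu v$, split $v = (v_A, v_{\hat{C}})$ according to the block structure, and further decompose each component into its $A$- or $\hat{C}$-orthogonal projection onto $\mathrm{Range}(P_A)$ or $\mathrm{Range}(P_{\hat{C}})$ and its complement. The coarse projector $Q$ eliminates the range-of-$\mathcal{P}$ components, and combining the smoothing property with the approximation property on each block (in the style of Theorem \ref{thm:teoconv}) yields the two bounds $1 - \omega/\kappa(A, P_A)$ and $1 - \omega/\kappa(\hat{C}, P_{\hat{C}})$. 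The other single terms $\omega \hat{\gamma}_A - 1$ and $\omega \hat{\gamma}_{\hat{C}} - 1$ come from the extremal eigenvalue estimates of the diagonal smoother parts $I - \omega D_A^{-1} A$ and $I - \omega D_{\hat{C}}^{-1} \hat{C}$ when the error lies entirely in the complement of $\mathrm{Range}(\mathcal{P})$; the constant $\hat{\gamma}_A = [\alpha(2 - \alpha\lambda_{\max}(D_A^{-1}A))]^{-1}$ is precisely the reciprocal of the quadratic form defining $\hat{C}$ on the range of $B^T$, which ties the $A$-block smoother to the $\hat{C}$-block structure. Finally, the cross-terms arising from the off-diagonal blocks of $M$ mix the $A$- and $\hat{C}$-energies; a Cauchy--Schwarz estimate followed by a harmonic/arithmetic mean argument produces the coupled bound $\sqrt{1 - \omega(2 - \omega\tilde{\gamma})/\tilde{\kappa}}$, and taking the maximum over all eigenvalue configurations delivers the stated estimate.

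The main obstacle is precisely this cross-coupling: because $\hat{\mathcal{A}}$ is non-symmetric and not block diagonal, the smoother $M$ transports energy between the $A$-component and the $\hat{C}$-component of the error, so the standard weighted-norm contraction of Theorem \ref{thm:teoconv} cannot be applied block by block. The design of $\mathcal{L}$ and $\mathcal{U}$ is engineered so that the off-diagonal block $(I - \alpha A D_A^{-1}) B^T$ factors through the same quadratic form $2\alpha D_A^{-1} - \alpha^2 D_A^{-1} A D_A^{-1}$ that enters the definition of $\hat{C}$; this identity is what ultimately allows the coupled term to be expressed purely in terms of the single-block constants $\kappa(A, P_A)$, $\kappa(\hat{C}, P_{\hat{C}})$, $\hat{\gamma}_A$, $\hat{\gamma}_{\hat{C}}$, rather than in terms of the full, indefinite spectrum of the original $\mathcal{A}$.
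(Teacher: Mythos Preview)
The paper does not supply a proof of this theorem: it is quoted from the cited reference \cite{MR3439215} and used only as a black box in the proof of Theorem~\ref{thm:Notay_symbol_2D_block}. There is therefore no ``paper's own proof'' to compare your proposal against.

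Your sketch does capture the correct architecture of Notay's argument: compute $\hat{\mathcal{A}}=\mathcal{LAU}$ explicitly, observe that its block diagonal is $\diag(A,\hat C)$ so that the pointwise Jacobi diagonal is $\diag(D_A,D_{\hat C})$, exploit the block-diagonal form of $\mathcal{P}$ to decouple the coarse-grid correction into the two Galerkin problems for $A$ and $\hat C$, and finally control the off-diagonal coupling by an arithmetic--harmonic mean inequality that produces $\tilde\gamma$ and $\tilde\kappa$. Two points deserve more care if you want to turn this into a proof. First, your reading of $\hat\gamma_A$ is slightly off: it is not an extremal-eigenvalue bound on $I-\omega D_A^{-1}A$ directly, but the reciprocal of the smallest eigenvalue of $2\alpha I-\alpha^2 D_A^{-1/2}AD_A^{-1/2}$; this is precisely what lets one compare $D_A^{-1}$ against the quadratic form $2\alpha D_A^{-1}-\alpha^2 D_A^{-1}AD_A^{-1}$ that builds $\hat C$ out of $B$, and this comparison is the hinge on which the whole block reduction turns. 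Second, the cross-coupling step is where the argument in \cite{MR3439215} is genuinely delicate: because $\hat{\mathcal A}$ is nonsymmetric, contraction cannot be read off in an energy norm, and a bare Cauchy--Schwarz on the off-diagonal blocks would not by itself recover the sharp five-term maximum in \eqref{eq:notay_max}. Your plan is a reasonable roadmap, but filling it in requires essentially reproducing the full analysis of the cited reference.
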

}

\section{TGM for saddle-point systems in the multilevel block setting}
\label{sec:saddle_multi_block}
The aim of the current section is to provide the theoretical background for the development and analysis of a TGM for the Stokes problem discretized with the technique of Subsection \ref{ssec:Q1_iso_Q1_matthias}. The starting point is Theorem \ref{thm:teo_notay}, which we want to combine with the theory for multilevel block structured matrices of Subsection \ref{ssec:multigrid_for_block}. The derivations are an extension of the results in \cite{saddle_DBFF}, where {we} consider a general unilevel circulant setting and a multilevel circulant setting which arises from the discretization of Stokes-like equations. The generalization that we consider in what follows consists in dealing with matrix-valued symbols instead of scalar-valued ones and this forces us to give the conditions on the symbols in a different form, owing to the non-commutativity of the matrix-matrix product.
 
The following results concern the properties of a matrix of the form
\begin{equation*}
\mathcal{A}=	\begin{bmatrix}
		\mathcal{C}_\mathbf{n}(\mathbf{f}_{A_x}) & O & \mathcal{C}_\mathbf{n}^T(\mathbf{f}_{B_x}) \\
		O & \mathcal{C}_\mathbf{n}(\mathbf{f}_{A_y}) & \mathcal{C}_\mathbf{n}^T(\mathbf{f}_{B_y}) \\
		\mathcal{C}_\mathbf{n}(\mathbf{f}_{B_x}) & \mathcal{C}_\mathbf{n}(\mathbf{f}_{B_y}) & -\mathcal{C}_\mathbf{n}(\mathbf{f}_{C})
	\end{bmatrix}\in \mathbb{R}^{(2s_a+s_c)n^2\times (2s_a+s_c)n^2}
\end{equation*}
with multilevel block circulant blocks generated by multivariate matrix-valued functions. For the sake of readability, we focus on the case $d=2$, $\mathbf{n}=(n,n)$ and, moreover, we fix $s_a\ge s_c$. An extension to larger dimensional problems is straightforward, in this case additional blocks will be part of $\mathcal{A}$, e.g., for $d=3$ we will have an additional block $A_z$ on the block diagonal and additionally $B_z$ and $B_z^T$ in the last block row and column, respectively.
\begin{Lemma}\label{lem:alpha_multi_block}
Let $\tilde{A}=\begin{bmatrix}
A_x&O\\
O & A_y
\end{bmatrix}\in \mathbb{R}^{2sn^2\times 2 s n^2}$, $A_x=\mathcal{C}_{\mathbf{n}}(\mathbf{f}_{A_x})$, $A_y=\mathcal{C}_{\mathbf{n}}(\mathbf{f}_{A_y})$ with $\mathbf{f}_{A_x}$, $\mathbf{f}_{A_y}$ $s\times s$ HPD matrix-valued bi-variate trigonometric polynomials.

If $\alpha$ is a positive number such that
\[\alpha< 2 \left(\max_{\zeta\in\{x,y\}} \left\{\max_{j=1,\dots,s} \left\{\left(\hat{a}_0^{(j,j)}(\mathbf{f}_{A_{\zeta}})\right)^{-1}\right\}	\left\| \mathbf{f}_{A_{\zeta}} \right\|_{\infty}\right\}\right)^{-1}
\]
then the matrix $2\alpha D^{-1}_{\tilde{A}}-\alpha^2D^{-1}_{\tilde{A}} \tilde{A} D^{-1}_{\tilde{A}} $ is HPD, where $D_{\tilde{A}}=\diag(\tilde{A})$.  
\end{Lemma}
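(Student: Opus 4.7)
My plan is to reduce the claim to a single spectral inequality for $D_{\tilde A}^{-1}\tilde A$, then to exploit the block-circulant structure of each diagonal block of $\tilde A$ separately in order to express the required bound in terms of the symbols $\mathbf{f}_{A_x}$ and $\mathbf{f}_{A_y}$.

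First I would use that $\tilde A$ HPD makes $D_{\tilde A}$ HPD, so $D_{\tilde A}^{1/2}$ is well defined and invertible. Factoring
\[
2\alpha D_{\tilde A}^{-1} - \alpha^2 D_{\tilde A}^{-1}\tilde A D_{\tilde A}^{-1} = \alpha D_{\tilde A}^{-1/2}\bigl(2I - \alpha D_{\tilde A}^{-1/2}\tilde A D_{\tilde A}^{-1/2}\bigr) D_{\tilde A}^{-1/2},
\]
Sylvester's law of inertia reduces the problem to showing $\alpha\lambda_{\max}(D_{\tilde A}^{-1/2}\tilde A D_{\tilde A}^{-1/2})<2$, which by similarity equals $\alpha\lambda_{\max}(D_{\tilde A}^{-1}\tilde A)<2$. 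Since $\tilde A$ and $D_{\tilde A}$ are block diagonal with matched blocks, this maximum splits and it suffices to bound $\max_{z\in\{x,y\}}\lambda_{\max}(D_{A_z}^{-1}A_z)$.

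Next, for fixed $z\in\{x,y\}$, I would read off the diagonal of $A_z=\mathcal{C}_\mathbf{n}(\mathbf{f}_{A_z})$ directly from Definition~\ref{def:toeplitz_block_generating}: only the term with $\mathbf{k}=\mathbf{0}$ contributes, so
\[
D_{A_z} = I_{n^2}\otimes D_0, \qquad D_0 := \diag\bigl(\hat a_0^{(1,1)}(\mathbf{f}_{A_z}),\ldots,\hat a_0^{(s,s)}(\mathbf{f}_{A_z})\bigr),
\]
with $D_0$ HPD because $\hat{\mathbf{f}}_{A_z,\mathbf 0}$ is HPD as the mean of an HPD matrix-valued function. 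A direct Kronecker computation in the spirit of Remark~\ref{rmk:rectang_Cn_prod} then yields
\[
D_{A_z}^{-1/2} A_z D_{A_z}^{-1/2} = \mathcal{C}_\mathbf{n}\bigl(D_0^{-1/2}\mathbf{f}_{A_z} D_0^{-1/2}\bigr),
\]
a circulant generated by a Hermitian matrix-valued trigonometric polynomial. Theorem~\ref{circ-s} together with Remark~\ref{rmk:fourier-vs-funzione} gives the eigenvalue bound $\lambda_{\max}(\mathcal{C}_\mathbf{n}(\mathbf{g}))\le\|\mathbf{g}\|_\infty$, while pointwise submultiplicativity of the spectral norm provides
\[
\|D_0^{-1/2}\mathbf{f}_{A_z}D_0^{-1/2}\|_\infty \le \|D_0^{-1}\|_2\,\|\mathbf{f}_{A_z}\|_\infty = \max_{j=1,\ldots,s}\bigl(\hat a_0^{(j,j)}(\mathbf{f}_{A_z})\bigr)^{-1}\|\mathbf{f}_{A_z}\|_\infty.
\]
Taking the maximum over $z\in\{x,y\}$ and inserting the assumed upper bound on $\alpha$ delivers $\alpha\lambda_{\max}(D_{\tilde A}^{-1}\tilde A)<2$, which closes the argument via the reduction step.

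The main obstacle I foresee is the bookkeeping in the circulant step: under the Kronecker ordering of Definition~\ref{def:toeplitz_block_generating} one must verify that the diagonal of $\mathcal{C}_\mathbf{n}(\mathbf{f}_{A_z})$ is exactly $I_{n^2}\otimes\diag(\hat{\mathbf{f}}_{A_z,\mathbf 0})$, and that conjugation by $I_{n^2}\otimes D_0^{-1/2}$ preserves the circulant algebra with the conjugated symbol. Both facts are routine consequences of $(A\otimes B)(C\otimes D)=(AC)\otimes(BD)$, but they are the only points where the non-commutativity of the matrix-valued symbol, absent in the scalar setting of \cite{saddle_DBFF}, genuinely intervenes and forces the diagonal factor $D_0$ to be written on both sides of $\mathbf{f}_{A_z}$ in the bound rather than absorbed into a single scalar prefactor.
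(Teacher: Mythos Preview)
Your proposal is correct and follows essentially the same route as the paper's own proof: the same congruence factorization reducing the question to $\alpha\lambda_{\max}(D_{\tilde A}^{-1/2}\tilde A D_{\tilde A}^{-1/2})<2$, the same identification $D_{A_z}=I_{n^2}\otimes D_0$, the same rewriting of $D_{A_z}^{-1/2}A_zD_{A_z}^{-1/2}$ as $\mathcal{C}_\mathbf{n}(D_0^{-1/2}\mathbf{f}_{A_z}D_0^{-1/2})$, and the same chain of norm inequalities. The only cosmetic differences are that you invoke Sylvester's inertia law and the similarity $D_{\tilde A}^{-1/2}\tilde A D_{\tilde A}^{-1/2}\sim D_{\tilde A}^{-1}\tilde A$ explicitly, whereas the paper keeps the symmetric form throughout.
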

\begin{proof}
Since $\tilde{A}$ is HPD, then $D_{\tilde{A}}$ is HPD. Hence, writing
\[
2\alpha D^{-1}_{\tilde{A}}-\alpha^2D^{-1}_{\tilde{A}} \tilde{A} D^{-1}_{\tilde{A}} = 
D^{-\frac{1}{2}}_{\tilde{A}}\left(2\alpha I_s-\alpha^2D^{-\frac{1}{2}}_{\tilde{A}} \tilde{A} D^{-\frac{1}{2}}_{\tilde{A}}\right)D^{-\frac{1}{2}}_{\tilde{A}}
\]
we see that we need to prove that the Hermitian matrix $2\alpha I_s-\alpha^2D^{-\frac{1}{2}}_{\tilde{A}} \tilde{A} D^{-\frac{1}{2}}_{\tilde{A}}$ is HPD. The eigenvalues of $2\alpha I_s-\alpha^2D^{-\frac{1}{2}}_{\tilde{A}} \tilde{A} D^{-\frac{1}{2}}_{\tilde{A}}$ are of the form
$2\alpha -\alpha^2 \lambda_j \left(D^{-\frac{1}{2}}_{\tilde{A}} \tilde{A} D^{-\frac{1}{2}}_{\tilde{A}}\right)$,
where $\lambda_j \left(D^{-\frac{1}{2}}_{\tilde{A}} \tilde{A} D^{-\frac{1}{2}}_{\tilde{A}}\right)$ are the positive eigenvalues of $D^{-\frac{1}{2}}_{\tilde{A}} \tilde{A} D^{-\frac{1}{2}}_{\tilde{A}}$. Recalling that $\alpha>0$ by assumption, we have
\[
	\lambda_{\min}\left(\alpha I_s-\alpha^2D^{-\frac{1}{2}}_{\tilde{A}} \tilde{A} D^{-\frac{1}{2}}_{\tilde{A}}\right)
	= 2\alpha -\alpha^2 \lambda_{\max} \left(D^{-\frac{1}{2}}_{\tilde{A}} \tilde{A} D^{-\frac{1}{2}}_{\tilde{A}}\right),
\]
which implies that $\alpha I_s-\alpha^2D^{-\frac{1}{2}}_{\tilde{A}} \tilde{A} D^{-\frac{1}{2}}_{\tilde{A}}$ is HPD if
\begin{equation}\label{eq:alpha_matform}
	0<\alpha< \frac{2}{\lambda_{\max} \left(D^{-\frac{1}{2}}_{\tilde{A}} \tilde{A} D^{-\frac{1}{2}}_{\tilde{A}}\right)}.
\end{equation}

By construction,  $D^{-1}_{\tilde{A}}=\begin{bmatrix}
D^{-1}_{A_x}&O\\
O & D^{-1}_{A_y}
\end{bmatrix}$, with 
\[D^{-1}_{A_x}=I_{n^2}\otimes \diag_{1\le j \le s} \left(\hat{a}_0^{(j,j)}(\mathbf{f}_{A_x})\right)^{-1}
, \quad 
D^{-1}_{A_y}=I_{n^2}\otimes \diag_{1\le j \le s} \left(\hat{a}_0^{(j,j)}(\mathbf{f}_{A_y})\right)^{-1},
\]
where $\hat{a}_0^{(1,1)}(\mathbf{f}_{A_x}), \dots, \hat{a}_0^{(s,s)}(\mathbf{f}_{A_x})$ and $\hat{a}_0^{(1,1)}(\mathbf{f}_{A_y}), \dots, \hat{a}_0^{(s,s)}(\mathbf{f}_{A_y})$ are the positive diagonal entries of the 0th block diagonal Fourier coefficient of $\mathbf{f}_{A_x}$ and $\mathbf{f}_{A_y}$, respectively. Consequently, we write
\begin{align*}
	\lambda_{\max} &\left(D^{-\frac{1}{2}}_{\tilde{A}} \tilde{A} D^{-\frac{1}{2}}_{\tilde{A}}\right) =\\
	& = \max_{\zeta\in\{x,y\}} \lambda_{\max}\left(D^{-\frac{1}{2}}_{A_{\zeta}} \tilde{A} D^{-\frac{1}{2}}_{A_{\zeta}} \right)\\
	& = \max_{\zeta\in\{x,y\}} \lambda_{\max}\left(\mathcal{C}_\mathbf{n}\left(
									\diag_{1\le j \le s} \left(\hat{a}_0^{(j,j)}(\mathbf{f}_{A_{\zeta}})\right)^{-\frac{1}{2}}
									\mathbf{f}_{A_{\zeta}}
									\diag_{1\le j \le s} \left(\hat{a}_0^{(j,j)}(\mathbf{f}_{A_{\zeta}})\right)^{-\frac{1}{2}}\right)\right)\\
	& \le \max_{\zeta\in\{x,y\}} \left\|	\diag_{1\le j \le s} \left(\hat{a}_0^{(j,j)}(\mathbf{f}_{A_{\zeta}})\right)^{-\frac{1}{2}}
									\mathbf{f}_{A_{\zeta}}
									\diag_{1\le j \le s} \left(\hat{a}_0^{(j,j)}(\mathbf{f}_{A_{\zeta}})\right)^{-\frac{1}{2}}\right\|_{\infty}\\
	& \le \max_{\zeta\in\{x,y\}} \left\{\max_{j=1,\dots,s} \left\{\left(\hat{a}_0^{(j,j)}(\mathbf{f}_{A_{\zeta}})\right)^{-1}\right\}	\left\| \mathbf{f}_{A_{\zeta}} \right\|_{\infty}\right\}.
\end{align*}
Hence, if \[0<\alpha< \frac{2}{\max_{\zeta\in\{x,y\}} \left\{\max_{j=1,\dots,s} \left\{\left(\hat{a}_0^{(j,j)}(\mathbf{f}_{A_{\zeta}})\right)^{-1}\right\}	\left\| \mathbf{f}_{A_{\zeta}} \right\|_{\infty}\right\}}
\]
the parameter $\alpha$ fulfils condition (\ref{eq:alpha_matform}) and the thesis of the Lemma follows.
\end{proof}

\begin{Remark}\label{rmk:Chat}
	In order to follow the procedure analysed by Theorem \ref{thm:teo_notay}, we need to study the matrix $\hat{C}$ defined in \eqref{eq:Chat}. Theorem \ref{thm:teo_notay} requires that either $B$ is full-rank or $C$ is HPD on the null space of $B^T$. In the next theorem, we will require that $\hat{C}$ is HPD, but this implies that the latter condition on $B$ and $C$ is fulfilled.	
	Indeed, if $\hat{C}$ is HPD and there exists $v\in\mathbb{R}^{s_cn^2}$ such that $v^TB=0$, then
	\begin{align*}
		0< v^T\hat{C}v 
		<v^TCv+v^TB\left(2\alpha D_A^{-1}-\alpha^2D_A^{-1}AD_A^{-1}\right)B^Tv
		<v^T{C}v.
	\end{align*}	
	Conversely, note that if $B$ is full-rank or $C$ is HPD on the null space of $B^T$, then $\hat{C}$ is HPD as long as $\alpha$ is chosen according to Lemma \ref{lem:alpha_multi_block}.
\end{Remark}

We are now ready to state the main theorem concerning the convergence of a TGM of the form of Theorem \ref{thm:teo_notay} in the multilevel block circulant setting.
\begin{Theorem}\label{thm:Notay_symbol_2D_block}
Let us define the matrix 
\begin{equation}\label{eq:problem_matrix}
\mathcal{A}=	\begin{bmatrix}
		\tilde{A} & B^T \\
		B & -C
	\end{bmatrix}\in \mathbb{R}^{(2s_a+s_c)n^2\times (2s_a+s_c)n^2}
\end{equation}
having the blocks with the following structure:
\begin{itemize}
\item $\tilde{A}=\begin{bmatrix}
A_x&O\\
O & A_y
\end{bmatrix}$ where $A_x=\mathcal{C}_{\mathbf{n}}(\mathbf{f}_{A_x})$, $A_y=\mathcal{C}_{\mathbf{n}}(\mathbf{f}_{A_y})$ with $\mathbf{f}_{A_x}$, $\mathbf{f}_{A_y}$ being $s_a\times s_a$ matrix-valued bivariate trigonometric polynomials. 
\item  $C=\mathcal{C}_{\mathbf{n}}(\mathbf{f}_C)$ with $\mathbf{f}_C$ $s_c\times s_c$ non-negative bivariate trigonometric polynomial.
\item $B=[B_x,B_y]$ where $B_x=\mathcal{C}_{\mathbf{n}}(\mathbf{f}_{B_x})$, $B_y=\mathcal{C}_{\mathbf{n}}(\mathbf{f}_{B_y})$ with $\mathbf{f}_{B_x}$ and $\mathbf{f}_{B_y}$ being $s_a\times s_c$ matrix-valued bivariate trigonometric polynomials such that
\begin{equation}\label{eq:bound_for_schur}
\mathbf{f}_{B_x}\left(\mathbf{f}_{A_x}\right)^{-1}\mathbf{f}_{B_x}^H+\mathbf{f}_{B_y}\left(\mathbf{f}_{A_y}\right)^{-1}\mathbf{f}_{B_y}^H \in L^{\infty}\left([-\pi,\pi]^2,s_c\right).
\end{equation}
%
\end{itemize}
Let $\alpha$ be a positive number such that 
\begin{equation}\label{eq:choice_of_alpha}
	\alpha< 2 \left(\max_{\zeta\in\{x,y\}} \left\{\max_{j=1,\dots,s} \left\{\left(\hat{a}_0^{(j,j)}(\mathbf{f}_{A_{\zeta}})\right)^{-1}\right\}	\left\| \mathbf{f}_{A_{\zeta}} \right\|_{\infty}\right\}\right)^{-1}
\end{equation}
and define $\mathcal{\hat{A}}=\mathcal{LAU}$ where
\begin{equation}\label{eq:def_LAU_2D_block}
\mathcal{L}= \begin{bmatrix}
I_{2s_an^2}\\
\alpha B D_{\tilde{A}}^{-1}&-I_{s_cn^2}
\end{bmatrix}, \quad \mathcal{U}=\begin{bmatrix}
I_{2s_an^2}&-\alpha D_{\tilde{A}}^{-1}B^T\\
&I_{s_cn^2}
\end{bmatrix}.
\end{equation}
Let $P_{\tilde{A}}$ and $P_{\hat{C}}$ be the full rank matrices defining the prolongation $\mathcal{P}$ for the global system $\mathcal{\hat{A}}$. That is,
\begin{equation*}
\mathcal{P}=\begin{bmatrix}
P_{\tilde{A}}& \\
& P_{\hat{C}}
\end{bmatrix},
\end{equation*}
  where $\hat{C}=C+B\left(2\alpha D_{\tilde{A}^{-1}}-\alpha^2D_{\tilde{A}^{-1}}\tilde{A}D_{\tilde{A}^{-1}}\right)B^T=C_{\mathbf{n}}(\mathbf{f}_{\hat{C}})$, $$P_{\tilde{A}}=\begin{bmatrix}
  \mathcal{C}_{\mathbf{n}}(\mathbf{p}_{A_x})(K_{\mathbf{n}}^T\otimes I_{s_a}) & \\
   &   \mathcal{C}_{\mathbf{n}}(\mathbf{p}_{A_y})(K_{\mathbf{n}}^T \otimes I_{s_a})
  \end{bmatrix}, \quad P_{\hat{C}}=\mathcal{C}_\mathbf{n}(\mathbf{p}_{\hat{C}})(K_\mathbf{n}^T\otimes I_{s_c})$$ with $\mathbf{p}_{A_x}$, $\mathbf{p}_{A_y}$, and $\mathbf{p}_{\hat{C}}$ bi-variate matrix-valued trigonometric polynomials.
%
Consider a TGM associated with a basic scheme using only a single step of damped Jacobi post-smoothing with relaxation parameter $\omega$. If
\begin{enumerate}
\item the trigonometric polynomials $\mathbf{f}_{A_x}$, $\mathbf{f}_{A_y}$, and $\mathbf{f}_{\hat{C}}$ are in the setting of (\ref{eqn:condition_on_f_multi}) and they are full rank at all grid points defined in (\ref{eq:grid});
\item the pairs $(\mathbf{p}_{A_x},\mathbf{f}_{A_x})$, $(\mathbf{p}_{A_y},\mathbf{f}_{A_y})$, and $(\mathbf{p}_{\hat{C}}, \mathbf{f}_{\hat{C}})$ fulfil relations (\ref{eqn:condition_on_p_multi_intr})--(\ref{eqn:condition_on_s_f_multi_intr});
 \item the relaxation parameter $\omega$ is a positive number such that 
\begin{multline}\label{eq:choice_of_omega}
 \omega<2\min \left\{2\alpha -\alpha^2\max_j\left\{ \frac{\|\mathbf{f}_{A_x}\|_\infty}{{\hat{a}_0^{(j,j)}(\mathbf{f}_{A_x})}}, \frac{\|\mathbf{f}_{A_y}\|_\infty}{{\hat{a}_0^{(j,j)}(\mathbf{f}_{A_y})}}\right\},\right.\\
 \left.\max_j\hat{a}_0^{(j,j)}(\mathbf{f}_{\hat{C}})\left\|\mathbf{f}_C+\mathbf{f}_{B_x}\mathbf{f}_{A_x}^{-1}\mathbf{f}_{B_x}^H+\mathbf{f}_{B_y}\mathbf{f}_{A_y}^{-1}\mathbf{f}_{B_y}^H\right\|_\infty^{-1}\right\},
\end{multline} 
\end{enumerate}
then $\rho({\rm TGM}(\mathcal{\hat{A}},\mathcal{P},\omega))<1.$
\end{Theorem}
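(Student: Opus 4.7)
The plan is to reduce the claim to Theorem~\ref{thm:teo_notay} applied to $\hat{\mathcal{A}}=\mathcal{L}\mathcal{A}\mathcal{U}$ and the block-diagonal prolongation $\mathcal{P}$, and then to verify the approximation property on each of the two diagonal blocks independently via the matrix-valued symbol TGM theory of Subsection~\ref{ssec:multigrid_for_block}. First, since $\alpha$ satisfies (\ref{eq:choice_of_alpha}), Lemma~\ref{lem:alpha_multi_block} ensures that $2\alpha D_{\tilde{A}}^{-1}-\alpha^2 D_{\tilde{A}}^{-1}\tilde{A}D_{\tilde{A}}^{-1}$ is HPD, so $\hat{C}$ is at least non-negative definite; Hypothesis~1 (that $\mathbf{f}_{\hat{C}}$ is full-rank at every grid point) upgrades this to $\hat{C}\succ 0$ via Theorem~\ref{circ-s} and Remark~\ref{rmk:fourier-vs-funzione}, and Remark~\ref{rmk:Chat} then delivers the rank/positivity condition on $(B,C)$ required by Theorem~\ref{thm:teo_notay}.

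Second, I would use the block circulant algebra of Remark~\ref{rmk:rectang_Cn_prod} to identify the symbol of $\hat{C}$. Writing $\Delta_z=\diag_{1\le j\le s_a}\hat{a}_0^{(j,j)}(\mathbf{f}_{A_z})$, one has $D_{A_z}=\mathcal{C}_{\mathbf{n}}(\Delta_z)$ (a constant-in-$\boldsymbol{\theta}$ symbol), so $\hat{C}=\mathcal{C}_{\mathbf{n}}(\mathbf{f}_{\hat{C}})$ with
\begin{equation*}
\mathbf{f}_{\hat{C}}=\mathbf{f}_C+\sum_{z\in\{x,y\}}\mathbf{f}_{B_z}\bigl(2\alpha\Delta_z^{-1}-\alpha^2\Delta_z^{-1}\mathbf{f}_{A_z}\Delta_z^{-1}\bigr)\mathbf{f}_{B_z}^H,
\end{equation*}
a matrix-valued bivariate trigonometric polynomial. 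Hypothesis~2 then guarantees, through the symbol-based theory of Subsection~\ref{ssec:multigrid_for_block}, that the pairs $(\mathbf{p}_{A_x},\mathbf{f}_{A_x})$, $(\mathbf{p}_{A_y},\mathbf{f}_{A_y})$, $(\mathbf{p}_{\hat{C}},\mathbf{f}_{\hat{C}})$ each satisfy the approximation property of Theorem~\ref{thm:teoconv}. Since $\tilde{A}$ and $P_{\tilde{A}}$ are block diagonal, this yields the finite constants $\kappa(\tilde{A},P_{\tilde{A}})$ and $\kappa(\hat{C},P_{\hat{C}})$ required by Theorem~\ref{thm:teo_notay}.

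Third, I would show that under (\ref{eq:choice_of_omega}) each of the five arguments of the $\max$ in (\ref{eq:notay_max}) is strictly less than $1$. The two arguments of the form $1-\omega/\kappa(\cdot,\cdot)$ are immediately less than $1$ for $\omega>0$. For $\omega\hat{\gamma}_{\tilde{A}}-1<1$, the estimate in the proof of Lemma~\ref{lem:alpha_multi_block} gives
\begin{equation*}
\lambda_{\max}(D_{\tilde{A}}^{-1}\tilde{A})\le \max_{z\in\{x,y\}}\max_{j}\bigl(\hat{a}_0^{(j,j)}(\mathbf{f}_{A_z})\bigr)^{-1}\|\mathbf{f}_{A_z}\|_\infty,
\end{equation*}
so that the first entry of the $\min$ in (\ref{eq:choice_of_omega}) is precisely $\omega<2\hat{\gamma}_{\tilde{A}}^{-1}$; an analogous estimate for $\hat{\gamma}_{\hat{C}}$ exploits that $C+BA^{-1}B^T$ is block circulant with symbol $\mathbf{f}_C+\sum_z\mathbf{f}_{B_z}\mathbf{f}_{A_z}^{-1}\mathbf{f}_{B_z}^H$, which lies in $L^\infty$ thanks to (\ref{eq:bound_for_schur}), and the second entry of (\ref{eq:choice_of_omega}) yields $\omega<2\hat{\gamma}_{\hat{C}}^{-1}$. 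For the fifth argument, since $\tilde{\gamma}$ is the harmonic mean of $\hat{\gamma}_{\tilde{A}}$ and $\hat{\gamma}_{\hat{C}}$ one has $\tilde{\gamma}\le\min(\hat{\gamma}_{\tilde{A}},\hat{\gamma}_{\hat{C}})$, so the previous two bounds force $\omega\tilde{\gamma}<2$, hence $\omega(2-\omega\tilde{\gamma})/\tilde{\kappa}>0$ and the square root is strictly less than $1$.

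The step I expect to be most delicate is the second one: translating the abstract scalars $\hat{\gamma}_{\tilde{A}}$, $\hat{\gamma}_{\hat{C}}$, $\kappa(\cdot,\cdot)$ of Theorem~\ref{thm:teo_notay} into quantities that depend only on the generating symbols, which requires a careful use of Remark~\ref{rmk:rectang_Cn_prod} to recognize $\hat{C}$ as a multilevel block circulant and to express the Schur complement of $\mathcal{A}$ purely at the symbol level. The non-commutativity of matrix-valued symbols is the chief complication with respect to the scalar setting of~\cite{saddle_DBFF}, and it is what forces the upper bounds to be stated via the maximal diagonal Fourier coefficient and the $L^\infty$ norm of the symbol rather than by a single scalar evaluation.
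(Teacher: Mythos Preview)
Your plan is essentially the paper's proof: reduce to Theorem~\ref{thm:teo_notay}, invoke the block-symbol approximation-property theory of Subsection~\ref{ssec:multigrid_for_block} for each diagonal block, and then bound the five entries of (\ref{eq:notay_max}) individually via symbol-level estimates on $\hat{\gamma}_{\tilde{A}}$ and $\hat{\gamma}_{\hat{C}}$.

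One slip to fix: the harmonic mean of two positive numbers satisfies $\min(a,b)\le \tfrac{2ab}{a+b}\le \max(a,b)$, not $\le\min$. Your argument for the fifth term still goes through once you replace $\tilde{\gamma}\le\min(\hat{\gamma}_{\tilde{A}},\hat{\gamma}_{\hat{C}})$ by $\tilde{\gamma}\le\max(\hat{\gamma}_{\tilde{A}},\hat{\gamma}_{\hat{C}})$, since the two earlier bounds $\omega\hat{\gamma}_{\tilde{A}}<2$ and $\omega\hat{\gamma}_{\hat{C}}<2$ already give $\omega\max(\hat{\gamma}_{\tilde{A}},\hat{\gamma}_{\hat{C}})<2$. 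The paper handles this step slightly differently---it passes from $\omega<2\min(m_1,m_2)$ to $\omega<m_1+m_2$ and then exploits the componentwise monotonicity of $(x,y)\mapsto 2xy/(x+y)$ to bound $\tilde{\gamma}\le 2/(m_1+m_2)$---but your corrected route is marginally simpler and reaches the same conclusion.
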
 
\begin{proof}
As a consequence of Remark \ref{rmk:fourier-vs-funzione}, the assumptions on $\mathbf{f}_{A_x}$, $\mathbf{f}_{A_y}$ and $\mathbf{f}_{\hat{C}}$ imply that $\tilde{A}$ is HPD. Moreover, we see from Remark \ref{rmk:Chat} that either $B$ is full-rank or $C$ is HPD on the null space of $B^T$.

The assumptions on $\mathbf{p}_{A_x}$, $\mathbf{p}_{A_y}$, $\mathbf{p}_{\hat{C}}$ and $\mathbf{f}_{A_x}$, $\mathbf{f}_{A_y}$, $\mathbf{f}_{\hat{C}}$ ensure the validation of the approximation property by \cite[Section 6]{block_multigrid2}. That is, the pairs $(\tilde{A}, P_{\tilde{A}})$ and $({\hat{C}},P_{\hat{C}})$ fulfil the approximation property in the 2D block setting. 
Therefore, we can apply Theorem \ref{thm:teo_notay} and by \eqref{eq:notay_max} we have
$$\rho({\rm TGM}(\mathcal{\hat{A}},\mathcal{P},\omega))\le \max\left(1-\frac{\omega}{{\kappa}(\tilde{A},P_{\tilde{A}})}, 1-\frac{\omega}{{\kappa}({\hat{C}},P_{\hat{C}})}, \omega \hat{\gamma}_{\tilde{A}}-1, \omega \hat{\gamma}_{\hat{C}}-1, \sqrt{1-\frac{\omega(2-\omega\tilde{\gamma})}{\tilde{\kappa}}} \right).$$
In conclusion, in order to prove that $\rho({\rm TGM}(\mathcal{\hat{A}},\mathcal{P},\omega))<1$, we need to prove that each  of the quantities
in the maximum is bounded from above by a constant strictly smaller that 1.
 
 The quantities $1-\frac{\omega}{{\kappa}({A},P_{A})}$ and $1-\frac{\omega}{{\kappa}({\hat{C}},P_{\hat{C}})}$ are strictly smaller than 1 because the approximation property constants ${{\kappa}({A},P_{A})}$ and ${{\kappa}({\hat{C}},P_{\hat{C}})}$  are finite and positive.
 
Concerning the two terms $\omega \hat{\gamma}_{\tilde{A}}-1$ and  $\omega \hat{\gamma}_{\hat{C}}-1$ in the maximum, we estimate
\begin{equation}\label{eq:gamma1_block}
 \begin{split}
 \hat{\gamma}_{\tilde{A}}&= \left(\alpha\left(2-\alpha\lambda_{\max}(D_{\tilde{A}}^{-1}\tilde{A})\right)\right)^{-1}
 =\\
 &\left(2\alpha -\alpha^2\lambda_{\max}\left(\begin{bmatrix}
D^{-1}_{A_x}A_x&O\\
O & D^{-1}_{A_y}A_y
\end{bmatrix}	\right)\right)^{-1}
 \le\\
 & \left(2\alpha -\alpha^2\max_j\left\{ \frac{\|\mathbf{f}_{A_x}\|_\infty}{\hat{a}_0^{(j,j)}(\mathbf{f}_{A_x})}, \frac{\|\mathbf{f}_{A_y}\|_\infty}{{\hat{a}_0^{(j,j)}(\mathbf{f}_{A_y})}}\right\}\right)^{-1} , 
 \end{split}
 \end{equation}  
   \begin{equation}\label{eq:gamma2_block}
   \begin{split}
\hat{\gamma}_{\hat{C}}= &\lambda_{\max}\left(D_{\hat{C}}^{-1}(C+B\tilde{A}^{-1}B^T)\right)
\le  \max_{j}\frac{1}{{\hat{a}_0^{(j,j)}(\mathbf{f}_{\hat{C}})}} \lambda_{\max}\left(\mathcal{C}_n\left(\mathbf{f}_C+\mathbf{f}_{B_x}\mathbf{f}_{A_x}^{-1}\mathbf{f}_{B_x}^H+\mathbf{f}_{B_y}\mathbf{f}_{A_y}^{-1}\mathbf{f}_{B_y}^H\right)\right)\\
\le 
&\max_{j} \frac{1}{{\hat{a}_0^{(j,j)}(\mathbf{f}_{\hat{C}})}} \left\|\mathbf{f}_C+\mathbf{f}_{B_x}\mathbf{f}_{A_x}^{-1}\mathbf{f}_{B_x}^H+\mathbf{f}_{B_y}\mathbf{f}_{A_y}^{-1}\mathbf{f}_{B_y}^H\right\|_\infty, 
\end{split}
\end{equation}
where in the latter inequality we are using assumption (\ref{eq:bound_for_schur}) and the sub-additivity of norms. Hence with the choice of $\omega$ in assumption 3., we have
\begin{align*}
\omega \hat{\gamma}_{\tilde{A}}-1<&2 \left(2\alpha -\alpha^2\max_j\left\{ \frac{\|\mathbf{f}_{A_x}\|_\infty}{{\hat{a}_0^{(j,j)}(\mathbf{f}_{A_x})}}, \frac{\|\mathbf{f}_{A_y}\|_\infty}{{\hat{a}_0^{(j,j)}(\mathbf{f}_{A_y})}}\right\}\right)\cdot\\
&\cdot \left(2\alpha -\alpha^2\max_j\left\{ \frac{\|\mathbf{f}_{A_x}\|_\infty}{\hat{a}_0^{(j,j)}(\mathbf{f}_{A_x})}, \frac{\|\mathbf{f}_{A_y}\|_\infty}{{\hat{a}_0^{(j,j)}(\mathbf{f}_{A_y})}}\right\}\right)^{-1} -1\\
=&1,
\end{align*}
and
\begin{align*}
\omega \hat{\gamma}_{\hat{C}}-1<& 2\max_j\hat{a}_0^{(j,j)}(\mathbf{f}_{\hat{C}})\left\|\mathbf{f}_C+\mathbf{f}_{B_x}\mathbf{f}_{A_x}^{-1}\mathbf{f}_{B_x}^H+\mathbf{f}_{B_y}\mathbf{f}_{A_y}^{-1}\mathbf{f}_{B_y}^H\right\|_\infty^{-1} \cdot\\
&\cdot \max_{j} \frac{1}{{\hat{a}_0^{(j,j)}(\mathbf{f}_{\hat{C}})}} \left\|\mathbf{f}_C+\mathbf{f}_{B_x}\mathbf{f}_{A_x}^{-1}\mathbf{f}_{B_x}^H+\mathbf{f}_{B_y}\mathbf{f}_{A_y}^{-1}\mathbf{f}_{B_y}^H\right\|_\infty -1\\
=&1.
\end{align*}
Finally, in order to prove that $\sqrt{1-\frac{\omega(2-\omega\tilde{\gamma})}{\tilde{\kappa}}} <1$, we prove that $2-\omega \tilde{\gamma}>0$.
From hypothesis 3., we have
\begin{equation}\label{eq:om1_block}
\begin{split}
	\omega
	<&  2\alpha -\alpha^2\max_j\left\{ \frac{\|\mathbf{f}_{A_x}\|_\infty}{{\hat{a}_0^{(j,j)}(\mathbf{f}_{A_x})}}, \frac{\|\mathbf{f}_{A_y}\|_\infty}{{\hat{a}_0^{(j,j)}(\mathbf{f}_{A_y})}}\right\}+\\
 &+\max_j\hat{a}_0^{(j,j)}(\mathbf{f}_{\hat{C}})\left\|\mathbf{f}_C+\mathbf{f}_{B_x}\mathbf{f}_{A_x}^{-1}\mathbf{f}_{B_x}^H+\mathbf{f}_{B_y}\mathbf{f}_{A_y}^{-1}\mathbf{f}_{B_y}^H\right\|_\infty^{-1}.
 \end{split}
\end{equation}
Moreover, using the estimations \eqref{eq:gamma1_block} and \eqref{eq:gamma2_block}, we have
\begin{equation}\label{eq:gamma3_block}
\begin{split}
\tilde{\gamma}&=\frac{2\hat{\gamma}_{A}\hat{\gamma}_{\hat{C}}}{\hat{\gamma}_{A}+\hat{\gamma}_{\hat{C}}}\\
&\le 
2 \left(
\frac{\left(2\alpha -\alpha^2\max_j\left\{ \frac{\|\mathbf{f}_{A_x}\|_\infty}{\hat{a}_0^{(j,j)}(\mathbf{f}_{A_x})}, \frac{\|\mathbf{f}_{A_y}\|_\infty}{{\hat{a}_0^{(j,j)}(\mathbf{f}_{A_y})}}\right\}\right)^{-1} 
\max_{j} \frac{\left\|\mathbf{f}_C+\mathbf{f}_{B_x}\mathbf{f}_{A_x}^{-1}\mathbf{f}_{B_x}^H+\mathbf{f}_{B_y}\mathbf{f}_{A_y}^{-1}\mathbf{f}_{B_y}^H\right\|_\infty}{{\hat{a}_0^{(j,j)}(\mathbf{f}_{\hat{C}})}} }
{\left(2\alpha -\alpha^2\max_j\left\{ \frac{\|\mathbf{f}_{A_x}\|_\infty}{\hat{a}_0^{(j,j)}(\mathbf{f}_{A_x})}, \frac{\|\mathbf{f}_{A_y}\|_\infty}{{\hat{a}_0^{(j,j)}(\mathbf{f}_{A_y})}}\right\}\right)^{-1}  
+\max_{j} \frac{\left\|\mathbf{f}_C+\mathbf{f}_{B_x}\mathbf{f}_{A_x}^{-1}\mathbf{f}_{B_x}^H+\mathbf{f}_{B_y}\mathbf{f}_{A_y}^{-1}\mathbf{f}_{B_y}^H\right\|_\infty}{{\hat{a}_0^{(j,j)}(\mathbf{f}_{\hat{C}})}} }
\right),
\end{split}
\end{equation}
where we used the fact that the function $(x,y)\mapsto 2 \frac{x y}{x+y} $ is increasing in $(0,+\infty)\times (0,+\infty)$ with respect to the component-wise partial ordering in $\mathbb{R}^2$.
Then, combining \eqref{eq:om1_block} and \eqref{eq:gamma3_block}, we obtain $\omega\tilde{\gamma}< 2$.
\end{proof}

\begin{Remark}\label{rmk:results_Czero_sc1}
	In the previous theorem, some of the assumptions concern the pair $\left({\hat{C}},P_{\hat{C}}\right)$. The matrix $\hat{C}$ is computed from $A$, $B$ and $C$, so the required assumptions can be stated involving explicitly the latter three matrices. For the sake of brevity, we discuss this point here in a particular setting, which is relevant from the application point of view. Indeed, we focus on the case where the matrix $\mathcal{A}$ in \eqref{eq:problem_matrix} is such that $C$ is the null matrix and $s_C = 1$, so that ${f}_{\hat{C}}$ is the scalar--valued function
	\[
		{f}_{\hat{C}}=\mathbf{f}_{B_x}\mathbf{g}_x\mathbf{f}_{B_x}^H+\mathbf{f}_{B_y}\mathbf{g}_y\mathbf{f}_{B_y}^H,
	\]
	where we denoted by $\mathbf{g}_{\zeta}$ the generating function of $2\alpha D^{-1}_{{A}_{\zeta}}-\alpha^2D^{-1}_{{A_{\zeta}}} {A_{\zeta}} D^{-1}_{{A}_{\zeta}} $, for ${\zeta}\in\{x,y\}$. If we choose $\alpha$ according to \eqref{eq:choice_of_alpha}, then the functions $\mathbf{g}_{\zeta}$ are HPD at all points. Assumption 1. of Theorem \ref{thm:Notay_symbol_2D_block} then corresponds  to requiring that there exists a $\boldsymbol{\theta}_0$ such that $\mathbf{f}_{B_x}(\boldsymbol{\theta})$ and $\mathbf{f}_{B_y}(\boldsymbol{\theta})$ are both the null vectors if and only if $\boldsymbol{\theta}=\boldsymbol{\theta}_0$. Regarding the choice of ${p}_{\hat{C}}$, it is straightforward to see that in this scalar setting relations (\ref{eqn:condition_on_p_multi_intr})--(\ref{eqn:condition_on_s_f_multi_intr}) are fulfilled by the pair $({p}_{\hat{C}}, {f}_{\hat{C}})$ if we have (\ref{eqn:condition_on_p_multi_intr}) and 
	\[
		\limsup_{\boldsymbol{\theta} \rightarrow \boldsymbol{\theta}_0} \frac{\sum_{\boldsymbol{\xi}  \in \Omega(\boldsymbol{\theta})\setminus \{\boldsymbol{\theta}\} }\left|{p}(\boldsymbol{\xi})\right|^2}	
		{\sum_{\zeta\in\{x,y\}}\mathbf{f}_{B_{\zeta}}(\boldsymbol{\theta})\mathbf{g}_{\zeta}(\boldsymbol{\theta})\mathbf{f}_{B_{\zeta}}^H(\boldsymbol{\theta})}<\infty.
	\]
	Since the enumerator is a scalar, this second condition is the finiteness of the following limit superior:
	\[
		\limsup_{\boldsymbol{\theta} \rightarrow \boldsymbol{\theta}_0} \frac{1}
		{\sum_{\zeta\in\{x,y\}}\left(\mathbf{f}_{B_{\zeta}}(\boldsymbol{\theta})/\sum_{\boldsymbol{\xi}  \in \Omega(\boldsymbol{\theta})\setminus \{\boldsymbol{\theta}\} }\left|{p}(\boldsymbol{\xi})\right|\right)\mathbf{g}_{\zeta}(\boldsymbol{\theta})\left(\mathbf{f}_{B_{\zeta}}^H(\boldsymbol{\theta})/\sum_{\boldsymbol{\xi}  \in \Omega(\boldsymbol{\theta})\setminus \{\boldsymbol{\theta}\} }\left|{p}(\boldsymbol{\xi})\right|\right)}.
	\]
	Recalling again that the matrices $\mathbf{g}_{\zeta}(\boldsymbol{\theta})$ are HPD for all $\boldsymbol{\theta}$, the latter condition corresponds to requiring that
	\begin{equation}\label{eq:limsup_Czero_sc1}
		\limsup_{\boldsymbol{\theta} \rightarrow \boldsymbol{\theta}_0} \frac{\mathbf{f}_{B_x}(\boldsymbol{\theta})}{\sum_{\boldsymbol{\xi}  \in \Omega(\boldsymbol{\theta})\setminus \{\boldsymbol{\theta}\} }\left|{p}(\boldsymbol{\xi})\right|} \neq \mathbf{0} \qquad \mbox{or} \qquad
		\limsup_{\boldsymbol{\theta} \rightarrow \boldsymbol{\theta}_0} \frac{\mathbf{f}_{B_y}(\boldsymbol{\theta})}{\sum_{\boldsymbol{\xi}  \in \Omega(\boldsymbol{\theta})\setminus \{\boldsymbol{\theta}\} }\left|{p}(\boldsymbol{\xi})\right|} \neq \mathbf{0}.
	\end{equation}
\end{Remark}

\begin{Remark}
 {The  presented analysis has been performed considering circulant matrices to simplify calculations and proofs. However, we consider in next sections  applications which usually produce Toeplitz/Toeplitz-like structures. Nevertheless, the convergence result in Theorem \ref{thm:Notay_symbol_2D_block} can be extended and it} holds also for multilevel block Toeplitz matrices generated by trigonometric polynomials. Indeed, they  are a low rank correction of multilevel block circulant matrices with the same generating function. The convergence could slightly deteriorate, but the combination of the proposed multigrid method with Krylov methods kills the outliers and guarantees the efficiency proven in the circulant case.
\end{Remark}

\section{Saddle-point matrices stemming from the Stokes equation}\label{sec:stokes}
In this section we are interested in applying the multigrid method described before to large linear systems stemming from the finite element approximation of the Stokes equation. 
The Stokes equation has the form
\begin{equation}\label{eq:thegeneralproblem}
\left\lbrace\begin{array}{rll}
\Delta \mathbf{u} + \mathop{\mathrm{grad}} p & = -\Phi \quad & \text{in}\, \Omega,\\
\mathop{\mathrm{div}} \mathbf{u} & = 0 \quad & \text{in}\, \Omega,\\
\mathbf{u} & = \mathbf{g}_D \quad & \text{on}\, \Gamma_D,\\
\frac{\partial \mathbf{u}}{\partial \nu} & = \mathbf{g}_N \quad & \text{on}\, \Gamma_N,
\end{array}\right.
\end{equation}
where $\mathbf{u}$ is the velocity and $p$ is the pressure, by $\Gamma_D$ we denote the Dirichlet boundary part of $\Gamma := \partial\Omega$ and by $\Gamma_N$ the Neumann part. For the construction and analysis of the system matrices we consider the domain $\Omega = (0,1)^2$, homogenous Dirichlet boundary conditions are attained for $x_1 = 0$ and $x_2 = 1$, homogenous Neumann boundary conditions for $x_1 = 1$ and $x_2 = 0$.

\subsection{A TGM for a Finite Element discretization of the Stokes problem}
\label{ssec:Q1_iso_Q1_matthias}
The weak form of \eqref{eq:thegeneralproblem} is given by
\begin{equation*}
\begin{array}{rll}
a(\mathbf{u},\mathbf{v}) - b(\mathbf{v},q) & = (\mathbf{\Phi},\mathbf{v}), \quad & \text{for all $\mathbf{v}$},\\
b(\mathbf{u},q) & = 0, \quad & \text{for all $\mathbf{q}$},
\end{array}
\end{equation*}
where
\begin{equation*}
a(\mathbf{u},\mathbf{v}) = \int\limits_\Omega \nabla \mathbf{u} \cdot \nabla \mathbf{v}\ dx \quad \text{and} \quad b(\mathbf{u},q) = \int\limits_\Omega \mathop{\mathrm{div}} \mathbf{u}\ q\ dx.
\end{equation*}
Higher order elements are needed for the discretization of the velocities $\mathbf{u}$ than for the discretization of the pressure $p$, e.g., Q2-Q1 Taylor-Hood-elements. Alternatively, linear elements can be used for the {velocities} but using a subdivision of the element, corresponding to a 2-times finer---or once refined---mesh for the {velocities}. We choose the latter approach that is often called Q1-iso-Q2/Q1 \cite[ {Section VI.6}]{MR1115205}. The properties of the system matrix are similar to those of the system matrix produced by Q2-Q1 elements. The advantage of using Q1-iso-Q2/Q1 is that the discrete operator representing $a$ is more sparse than that for Q2-Q1 and thus cheaper to apply. {We consider uniform discretization with same numbers of points in both directions $x$ and $y$ equals to $n$. } The resulting algebraic system of equations is given by
\begin{equation}
\label{eq:syst_q1_iso_q2}
	{{A}}_N\mathbf{x} = \left[\begin{array}{cc|c}
	{A_x} & O & B_x \\
	O & A_y& B_y \\[-0.1em] \hline
	B_x^H & B_y^H & O
	\end{array}\right]\begin{bmatrix}
	 \mathbf{u}\\
	  \mathbf{p}  
\end{bmatrix}	  = \begin{bmatrix}
	 \mathbf{b_u}\\
	  \mathbf{b_p}  
\end{bmatrix}=	 \mathbf{b},
\end{equation}
where $A_x,A_y \in \mathbb{R}^{4n^2\times 4n^2}$ and $B_x, B_y \in \mathbb{R}^{4n^2\times n^2}$, so $N = 9n^2$.
In particular, the matrices $A_x$ and $A_y$ are non-negative and they are a permutation of $4\times 4$ block bi-level Toeplitz matrices with partial dimension $n$.
Precisely, let $\mathbf{e}_i$  be the $i$th column of the identity matrix of size $4n^2$, we can define a proper $4n^2\times 4n^2$ permutation matrix,  $\Pi_4=[P_1|P_2|\dots|P_4]$, $P_l\in \mathbb{R}^{4n^2\times n^2},\, l=1,\dots,4$,  such that the $k$th column of $P_l,\, l=1,\dots,4$, is $e_{l+4(k-1)}$. The matrix $\Pi_4$ transforms the matrix ${A_x}$ into the following a $4\times 4$ block bi-level Toeplitz matrix
\[ T_{\mathbf{n}}(\mathbf{f}_{A_x}(\theta_1,\theta_2))=\Pi_4{A_x}\Pi_4^T,\]
where 
\begin{equation}\label{eq:fA}
\mathbf{f}_{A_x}(\theta_1,\theta_2)=
-\frac{1}{3}\begin{bmatrix}
-8 & 1+{\rm e}^{-\iota\theta_1} & 1+{\rm e}^{-\iota\theta_2} &\varphi(-\theta_1,-\theta_2)\\
1+{\rm e}^{\iota\theta_1} & -8 & \varphi(\theta_1,-\theta_2) & 1+{\rm e}^{-\iota\theta_2}\\
1+{\rm e}^{\iota\theta_2}& \varphi(-\theta_1,\theta_2) & -8 & 1+{\rm e}^{-\iota\theta_1}\\
\varphi(\theta_1,\theta_2) &  1+{\rm e}^{\iota\theta_2} &  1+{\rm e}^{\iota\theta_1} & -8
\end{bmatrix},
\end{equation}
with $\varphi(\theta_1,\theta_2)=1+{\rm e}^{\iota\theta_1}+{\rm e}^{\iota\theta_2}+{\rm e}^{\iota(\theta_1+\theta_2)}$. An analogous transformation can be applied to ${A_y}$ and the generating function is $\mathbf{f}_{A_y}(\theta_1,\theta_2)=\mathbf{f}_{A_x}(\theta_2,\theta_1)$.

We consider an analogous $N\times N$ permutation matrix,  $\Pi_9=[P_1|P_2|\dots|P_9]$, $P_l\in \mathbb{R}^{N\times n^2},\, l=1,\dots,9$,  such that the $k$th column of $P_l,\, l=1,\dots,9$, is $e_{l+9(k-1)}$, with $e_i$ being the $i$th column of the identity matrix of size $9n^2$. We can transform the global matrix $\mathcal{A}_N$ in a $9\times 9$ block bi-level system. This property will be used later when we will exploit the information of the permuted matrix
$
	G_N=\Pi_9 \mathcal{A}_N \Pi_9^T=T_{\mathbf{n}}(\mathbf{f}),
$
	where
	$T_{\mathbf{n}}(\mathbf{f})$ is the bi-level $9\times 9$ block Toeplitz
		$
		T_{\bf{n}}(\mathbf{f}) =\left[\hat{\mathbf{f}}_{\mathbf{i}-\mathbf{j}}\right]_{\mathbf{i},\mathbf{j}=\mathbf{1}}^{\bf n}\in\mathbb{C}^{N\times N}
		$
		generated by $\mathbf{f}:[-\pi,\pi]^2\rightarrow \mathbb{C}^{9\times 9}$. In particular,
		
	\begin{equation}\label{eq:symbol_f}
	\mathbf{f}(\theta_1,\theta_2)=\begin{bmatrix}
	\mathbf{f}_{A_x}(\theta_1,\theta_2) & O & \mathbf{f}_{B_x}(\theta_1,\theta_2)\\
	O& \mathbf{f}_{A_y}(\theta_1,\theta_2)& \mathbf{f}_{B_y}(\theta_1,\theta_2)\\
	\mathbf{f}_{B_x}^H(\theta_1,\theta_2)& \mathbf{f}_{B_y}^H(\theta_1,\theta2)& 0
	\end{bmatrix},
\end{equation}		
where
\begin{equation}
\label{eq:simbolo_global}
	\mathbf{f}_{B_x}(\theta_1,\theta_2)=\begin{bmatrix}
	\frac{1}{48}(-\iota\sin\theta_1)(1+2\cos\theta_2)\\
	\frac{1}{24}(1-{\rm e}^{\iota\theta_1})(5+\cos\theta_2)\\
	\frac{1}{8}(\iota\sin\theta_1)(1+{\rm e}^{\iota\theta_2})\\
	\frac{1}{8}(1-{\rm e}^{\iota\theta_1})(1+{\rm e}^{\iota\theta_2})
	\end{bmatrix}\!\!,\,
	\mathbf{f}_{B_y}(\theta_1,\theta_2)=\begin{bmatrix}
	\frac{1}{48}(1+2\cos\theta_1)(-\iota\sin\theta_2)\\
		\frac{1}{8}(1+{\rm e}^{\iota\theta_1})(-\iota\sin\theta_2)\\
	\frac{1}{24}(5+\cos\theta_1)(1-{\rm e}^{\iota\theta_2})\\
	\frac{1}{8}(1+{\rm e}^{\iota\theta_1})(1-{\rm e}^{\iota\theta_2})
	\end{bmatrix}\!\!.
\end{equation}	

{To solve efficiently the system (\ref{eq:syst_q1_iso_q2}) with TGM satisfying the assumptions of Theorem 	\ref{thm:Notay_symbol_2D_block}},  we investigate the structure of the coefficient matrix  ${\mathcal{A}}_N $. It is a block matrix of the form
\begin{equation*}
{\mathcal{A}}_N = \left[\begin{array}{ccc}
	\tilde{A}& B^T \\
	B & -C
	\end{array}\right],
\end{equation*}
with  $C=O$, $B=[B_x,B_y]$ and $\tilde{A}=\begin{bmatrix}
{A_x} & O\\
	O & {A_y}
\end{bmatrix}$. 

\subsection{Spectral analysis of $\tilde{A}$ and choice of $P_{\tilde{A}}$ }
\label{ssec:A}

{
Lemma \ref{lem:alpha_multi_block} and Theorem \ref{thm:Notay_symbol_2D_block} highlight the importance of the block $\tilde{A}$  for the choice of the parameter $\alpha$ and the construction of an efficient operator $P_{\tilde{A}}$. 
In practice we can just consider the matrix  $A_x$ and the related generating function $\mathbf{f}_{A_x}$, since $A_y$ and $\mathbf{f}_{A_y}$ enjoy the same spectral properties. For this reason in the following we will avoid the specification $x$ or $y$.}
We already know that $A$ is similar to a bi-level $4\times 4$ block Toeplitz $T_{\bf{n}}(\mathbf{f}_A)$. The following proposition provides us with important spectral information on $\mathbf{f}_A$, and consequently $T_{\bf{n}}(\mathbf{f}_A)$.
\begin{Proposition}\label{prop:fA}
Let $\mathbf{f}_A$ be the $4\times 4$ matrix-valued function defined in (\ref{eq:fA}) and $T_{\bf{n}}(\mathbf{f}_A)=\Pi_4A\Pi_4^T$ the associated Toeplitz matrix. Then the four eigenvalue functions of $\mathbf{f}_A$  are 
 \begin{equation}
 \label{eq:eig_fa_explicit}
 \begin{split}
 & \lambda_1(\mathbf{f}_A)= 3-\frac{1}{3}{\rm e}^{-\iota \left(\frac{\theta_1+\theta_2}{2}\right)}\left( {\rm e}^{\iota \theta_1}+{\rm e}^{\iota \frac{\theta_1}{2}}+1\right)
\left( {\rm e}^{\iota \theta_2}+{\rm e}^{\iota \frac{\theta_2}{2}}+1\right);\\
& \lambda_2(\mathbf{f}_A)=3-\frac{1}{3}{\rm e}^{-\iota \left(\frac{\theta_1+\theta_2}{2}\right)}\left( {\rm e}^{\iota \theta_1}-{\rm e}^{\iota \frac{\theta_1}{2}}+1\right)
\left( {\rm e}^{\iota \theta_2}-{\rm e}^{\iota \frac{\theta_2}{2}}+1\right);\\
& \lambda_3(\mathbf{f}_A)=3+\frac{1}{3}{\rm e}^{-\iota \left(\frac{\theta_1+\theta_2}{2}\right)}\left( {\rm e}^{\iota \theta_2}-{\rm e}^{\iota \frac{\theta_2}{2}}+1\right)
\left( {\rm e}^{\iota \theta_1}+{\rm e}^{\iota \frac{\theta_1}{2}}+1\right);\\
&\lambda_4(\mathbf{f}_A)=3+\frac{1}{3}{\rm e}^{-\iota \left(\frac{\theta_1+\theta_2}{2}\right)}\left( {\rm e}^{\iota \theta_1}-{\rm e}^{\iota \frac{\theta_1}{2}}+1\right)
\left( {\rm e}^{\iota \theta_2}+{\rm e}^{\iota \frac{\theta_2}{2}}+1\right).\\
 \end{split}
 \end{equation}
 Moreover, 
 \begin{enumerate}
 \item the minimum eigenvalue function of $ \lambda_1(\mathbf{f}_A)$ is non-negative with a zero of order 2 in $\boldsymbol{\theta}_0=(0, 0)$. Furthermore, $e_4$ is the eigenvector of $\mathbf{f}_A(\boldsymbol{\theta}_0)$ associated with $\lambda_1(\mathbf{f}_A(\boldsymbol{\theta}_0))=0$, where ${\rm \textbf{e}}_{4}=[1,1,1,1]^T$. 
 \item the minimal eigenvalue of $T_{\bf{n}}(\mathbf{f}_A)$ (and consequently of $A$) goes to zero as $(4n^2)^{-1}$;
 \item the condition number $\kappa(T_{\bf{n}}(\mathbf{f}_A))$ of $T_{\bf{n}}(\mathbf{f}_A)$ (and consequently of $A$) is proportional to  $O(n^2)$.
 \end{enumerate}
  
\end{Proposition}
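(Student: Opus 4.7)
The plan is to exploit a Kronecker-product decomposition of $\mathbf{f}_A$ that makes all four eigenvalues explicit. Setting
$\mathbf{g}(\theta)=\begin{bmatrix}1 & 1+\E^{-\iota\theta}\\ 1+\E^{\iota\theta} & 1\end{bmatrix}$,
I would first verify entry-by-entry the identity
\begin{equation*}
\mathbf{f}_A(\theta_1,\theta_2) = 3\, I_4 - \tfrac{1}{3}\, \mathbf{g}(\theta_2)\otimes \mathbf{g}(\theta_1).
\end{equation*}
This is a finite algebraic check on the $16$ entries: both sides carry $8/3$ on the diagonal, and every off-diagonal entry of $\mathbf{f}_A$ factors correctly into a $\theta_1$-piece times a $\theta_2$-piece. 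For instance, since $\varphi(\theta_1,\theta_2)=(1+\E^{\iota\theta_1})(1+\E^{\iota\theta_2})$, the $(1,4)$ entry of $\mathbf{f}_A$ equals $-\tfrac{1}{3}(1+\E^{-\iota\theta_1})(1+\E^{-\iota\theta_2})=-\tfrac{1}{3}[\mathbf{g}(\theta_2)\otimes\mathbf{g}(\theta_1)]_{1,4}$, and the remaining entries match analogously.

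Since $\mathbf{g}(\theta)$ is Hermitian with trace $2$ and determinant $1-|1+\E^{\iota\theta}|^2=-1-2\cos\theta$, its eigenvalues are $1\pm|1+\E^{\iota\theta}|=1\pm 2\cos(\theta/2)$ on $[-\pi,\pi]$. The spectrum of a Kronecker product being the set of pairwise products, the four eigenvalues of $\mathbf{f}_A$ are
\begin{equation*}
3-\tfrac{1}{3}\bigl(1+\varepsilon_1\cdot 2\cos(\theta_1/2)\bigr)\bigl(1+\varepsilon_2\cdot 2\cos(\theta_2/2)\bigr),\qquad \varepsilon_1,\varepsilon_2\in\{+1,-1\},
\end{equation*}
and the elementary identity $\E^{-\iota\theta/2}\bigl(\E^{\iota\theta}\pm \E^{\iota\theta/2}+1\bigr) = \pm 1 + 2\cos(\theta/2)$ converts these into the four closed forms displayed in \eqref{eq:eig_fa_explicit}; the case $\varepsilon_1=\varepsilon_2=+1$ gives $\lambda_1$.

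For item (1), writing $q_i:=1+2\cos(\theta_i/2)$ and $r_i:=q_i-2$, one has $q_i\in[1,3]$ and $r_i\in[-1,1]$ on $[-\pi,\pi]$, so $\lambda_1\in[0,8/3]$ with $\lambda_1=0$ iff $q_1=q_2=3$, i.e.\ iff $\boldsymbol{\theta}=(0,0)$. The pointwise differences $\lambda_3-\lambda_1=\tfrac{4}{3}q_1\cos(\theta_2/2)$, $\lambda_4-\lambda_1=\tfrac{4}{3}q_2\cos(\theta_1/2)$ and $\lambda_2-\lambda_1=\tfrac{2}{3}(q_1+q_2-2)$ are all non-negative, so $\lambda_1$ is globally the minimal eigenvalue. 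Taylor-expanding $q_i=3-\theta_i^2/4+O(\theta_i^4)$ near the origin yields $\lambda_1(\boldsymbol{\theta})=\tfrac{1}{4}(\theta_1^2+\theta_2^2)+O(\|\boldsymbol{\theta}\|^4)$, exhibiting a zero of order $2$ together with two-sided bounds $c\|\boldsymbol{\theta}\|^2\le\lambda_1(\mathbf{f}_A(\boldsymbol{\theta}))\le C\|\boldsymbol{\theta}\|^2$. Finally, the leading eigenvector of $\mathbf{g}(0)=\bigl[\begin{smallmatrix}1&2\\2&1\end{smallmatrix}\bigr]$ for the eigenvalue $3$ is $(1,1)^T$, so by the Kronecker structure $(1,1)^T\otimes(1,1)^T=(1,1,1,1)^T$ is the null vector of $\mathbf{f}_A(\boldsymbol{\theta}_0)$, recovering the claimed $e_4$.

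Items (2) and (3) then follow at once from Theorem~\ref{thm:loc-extr-s}: feeding the two-sided bound on $\lambda_1$ with $t=2$ and $m=2$ into part (2) of the theorem gives $\lambda_{\min}(T_{\mathbf{n}}(\mathbf{f}_A))=O(n^{-2})$, while $\lambda_{\max}(T_{\mathbf{n}}(\mathbf{f}_A))\le\|\mathbf{f}_A\|_\infty$ is a finite constant by part (1), yielding $\kappa(T_{\mathbf{n}}(\mathbf{f}_A))=O(n^2)$. The only non-routine step is discovering the Kronecker decomposition of $\mathbf{f}_A$ in the first place: it is natural in view of the tensor-product structure of the $Q1$-iso-$Q2/Q1$ element and the symmetry of $\mathbf{f}_A$ under swapping $\theta_1\leftrightarrow\theta_2$, but it is not visible from a cursory glance at \eqref{eq:fA}, so this is where the main effort goes; everything else is elementary trigonometry plus one application of the localisation theorem.
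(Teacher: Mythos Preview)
Your proof is correct and follows essentially the same route as the paper: both arguments hinge on the Kronecker decomposition $\mathbf{f}_A(\theta_1,\theta_2)=3I_4-\tfrac{1}{3}\,h(\theta_2)\otimes h(\theta_1)$ with $h=\mathbf{g}$, compute the eigenvalues of the $2\times2$ factor, and then invoke Theorem~\ref{thm:loc-extr-s} for items (2) and (3). Your version is marginally more thorough in that you verify explicitly that $\lambda_1$ is the global minimum via the pairwise differences and obtain the order-$2$ zero by Taylor expansion, whereas the paper checks the zero by evaluating first and second partial derivatives and verifies $\mathbf{f}_A(\boldsymbol{\theta}_0)e_4=0$ by direct matrix multiplication rather than via the tensor structure.
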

\begin{proof}
The function $\mathbf{f}_A$ can be written in a more compact form as
\begin{equation}
\label{eq:symplification_fA}
\mathbf{f}_A(\theta_1,\theta_2)=\frac{9}{3} I_4-\frac{1}{3} h(\theta_2)\otimes h(\theta_1)=3I_4-\frac{1}{3} h(\theta_2)\otimes h(\theta_1),
\end{equation}
where $h(\theta)=\begin{bmatrix}
1 & {\rm e}^{-\iota \theta}+1\\
{\rm e}^{\iota \theta}+1 & 1
\end{bmatrix}.
$
This implies that $T_{\bf{n}}(\mathbf{f}_A)$ is the SPD matrix 
\begin{equation}
\label{eq:perm_A}
T_{\bf{n}}(\mathbf{f}_A)=3I_{4n^2}-\frac{1}{3} T_{{n}}(h(\theta_2))\otimes T_{{n}}(h(\theta_1)).
\end{equation}
Moreover, formula (\ref{eq:symplification_fA})  and the properties of the tensor product imply that the four eigenvalue functions $\lambda_k(\mathbf{f}_A), \, k=1,\dots,4$ of $\mathbf{f}_A$ are given by the following combination of the two eigenvalue functions of $h({\theta})$ $$\left\{3-\frac{1}{3} \lambda_{i}(h(\theta_2))\lambda_{j}(h(\theta_1)\right\}_{i,j=1}^2.$$ 
The eigenvalue functions of $h({\theta})$ can be computed analytically and they are given by
 \[\lambda_1(h(\theta))= -{\rm e}^{-\iota \frac{\theta}{2}}({\rm e}^{\iota {\theta}}-{\rm e}^{\iota \frac{\theta}{2}} + 1); \quad \lambda_2(h(\theta))={\rm e}^{-\iota \frac{\theta}{2}}({\rm e}^{\iota {\theta}}+{\rm e}^{\iota \frac{\theta}{2}} + 1).\] 
 Then, using their combination, we derive the expressions of the four eigenvalue functions of $\mathbf{f}_A(\theta_1,\theta_2)$ which are exactly those given in formula (\ref{eq:eig_fa_explicit}).
Moreover, by direct computation, we have
\[\mathbf{f}_A(0,0){\rm \textbf{e}}_{4}=\begin{bmatrix}
 &   8/3    &       -2/3 &          -2/3   &        -4/3     \\
  &    -2/3  &          8/3&           -4/3 &          -2/3    \\ 
   &   -2/3  &         -4/3 &           8/3  &         -2/3     \\
    &  -4/3   &        -2/3   &        -2/3   &         8/3 \\
\end{bmatrix}{\rm \textbf{e}}_{4}=0{\rm \textbf{e}}_{4}.
 \]
 Then, ${\rm \textbf{e}}_{4}$  is an eigenvector of $\mathbf{f}_A(\boldsymbol{\theta}_0)$ associated with $0$.
In addition, the minimal eigenvalue function is
\[\lambda_1(\mathbf{f}_A)= 3-\frac{1}{3}{\rm e}^{-\iota \left(\frac{\theta_1+\theta_2}{2}\right)}\left( {\rm e}^{\iota \theta_1}+{\rm e}^{\iota \frac{\theta_1}{2}}+1\right)
\left( {\rm e}^{\iota \theta_2}+{\rm e}^{\iota \frac{\theta_2}{2}}+1\right),\]
and it is straightforward to see that is a non-negative function over $[-\pi,\pi]$ and it is such that 
 \begin{align*}
\lambda_1(\mathbf{f}_A){|_{(0,0)}}&= 0,\\
\frac{\partial \, \lambda_1(\mathbf{f}_A)(\theta_1, \theta_2)}{\partial{\theta_1}}{\left|_{(0,0)}\right.}&=\frac{\partial \, \lambda_1(\mathbf{f}_A)(\theta_1, \theta_2)}{\partial{\theta_2}}{\left|_{(0,0)}\right.}=0,
\end{align*}
\begin{align*}
\frac{\partial^2 \, \lambda_1(\mathbf{f}_A)(\theta_1, \theta_2)}{\partial{\theta_2}\partial{\theta_1}}{\left|_{(0,0)}\right.}&= \frac{\partial^2 \, \lambda_1(\mathbf{f}_A)(\theta_1, \theta_2)}{\partial{\theta_1}\partial{\theta_2}}{\left|_{(0,0)}\right.}=0, 
\end{align*}
\begin{align*}
\frac{\partial^2 \, \lambda_1(\mathbf{f}_A)(\theta_1, \theta_2)}{\partial{\theta_1^2}}{\left|_{(0,0)}\right.}&=\frac{\partial^2 \, \lambda_1(\mathbf{f}_A)(\theta_1, \theta_2)}{\partial{\theta_2^2}}{\left|_{(0,0)}\right.}= \frac{1}{2}.\\
\end{align*} Therefore, $\lambda_1(\mathbf{f}_A)$ has a zero of order 2 in $\boldsymbol{\theta}_0=(0, 0)$.
In light of the third item of Theorem \ref{thm:loc-extr-s}, we conclude that the minimal eigenvalue of $T_{\bf{n}}(\mathbf{f}_A)$  goes to zero as $(4n^2)^{-1}$ and, by similitude, that of $A$ as well.  Furthermore,
\begin{equation}
\label{eq:max_eig_fA}
M_s=\max_{(\theta_1,\theta_2)\in [-\pi,\pi]}\lambda_4(\mathbf{f}_A)=\max_{(\theta_1,\theta_2)\in [-\pi,\pi]}\lambda_3(\mathbf{f}_A)=4,
\end{equation}
 and from Theorem \ref{thm:loc-extr-s} we have that the spectrum of $T_{\bf{n}}(\mathbf{f}_A)$ is contained in $(0,4).$ Consequently,
the condition number $\kappa(T_{\bf{n}}(\mathbf{f}_A))$ of $T_{\bf{n}}(\mathbf{f}_A)$ is 
$$\kappa(T_{\bf{n}}(\mathbf{f}_A))=\frac{\lambda_{\max}(T_{\bf{n}}(\mathbf{f}_A))}{\lambda_{\min}(T_{\bf{n}}(\mathbf{f}_A))}\le \frac{4}{\lambda_{\min}(T_{\bf{n}}(\mathbf{f}_A))}\thickapprox n^2. $$
The similitude between $A$ and  $T_{\bf{n}}(\mathbf{f}_A)$ concludes the proof of the Proposition.
 	\end{proof}
A graphical confirmation of the result of the Proposition \ref{prop:fA} concerning the behavior of the eigenvalue functions is given in Figure \ref{fig:eig_fA} where the plot of $\lambda_i(\mathbf{f}_A)$, $i=1,\dots,4,$ is shown.

\begin{figure}[htb]
\centering
\includegraphics[width=0.39\textwidth]{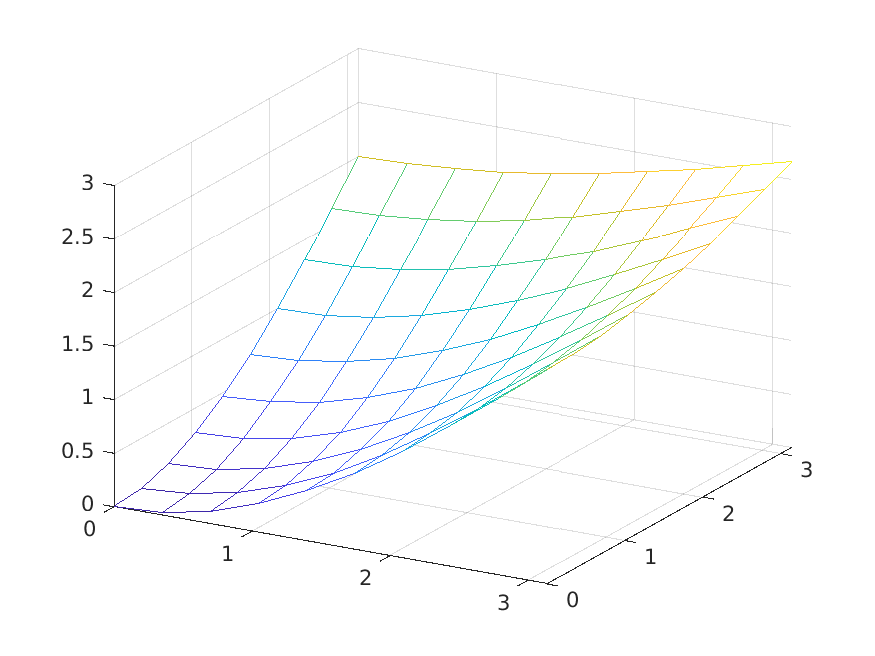}
\label{valutazioni_lambda1_n=40_fA}
\includegraphics[width=0.39\textwidth]{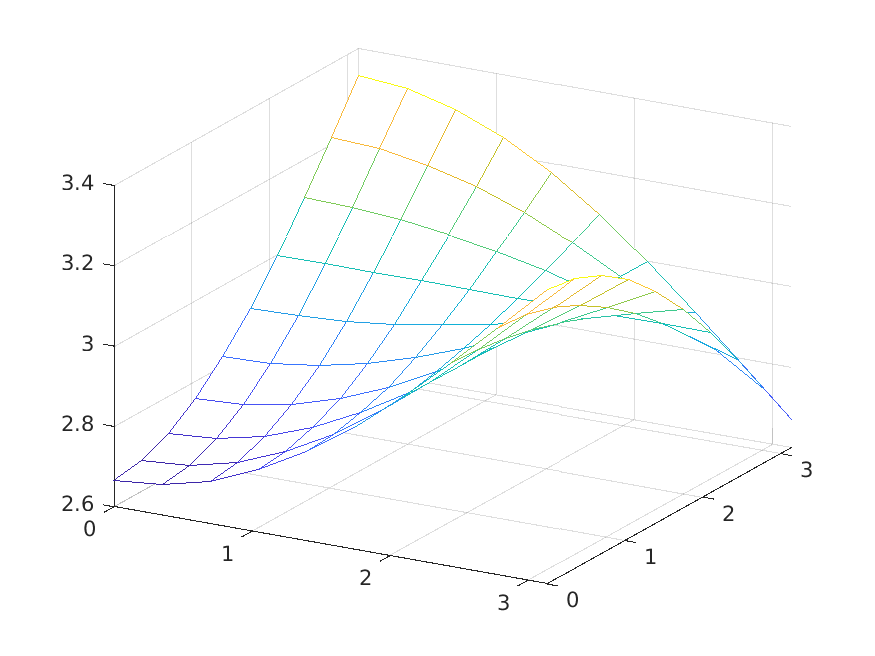}
\label{valutazioni_lambda2_n=40_fA}
\includegraphics[width=0.39\textwidth]{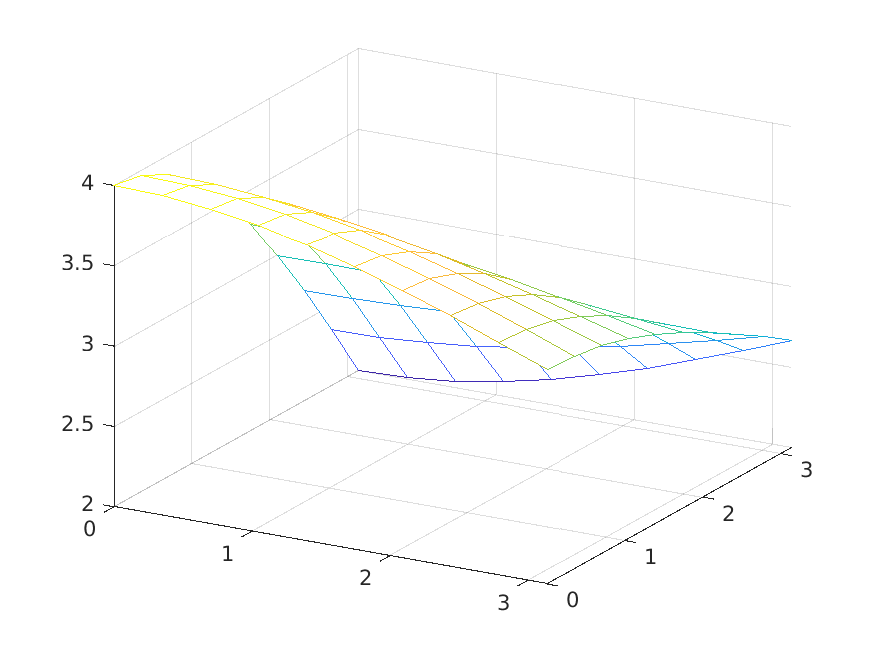}
\label{valutazioni_lambda3_n=40_fA}
\includegraphics[width=0.39\textwidth]{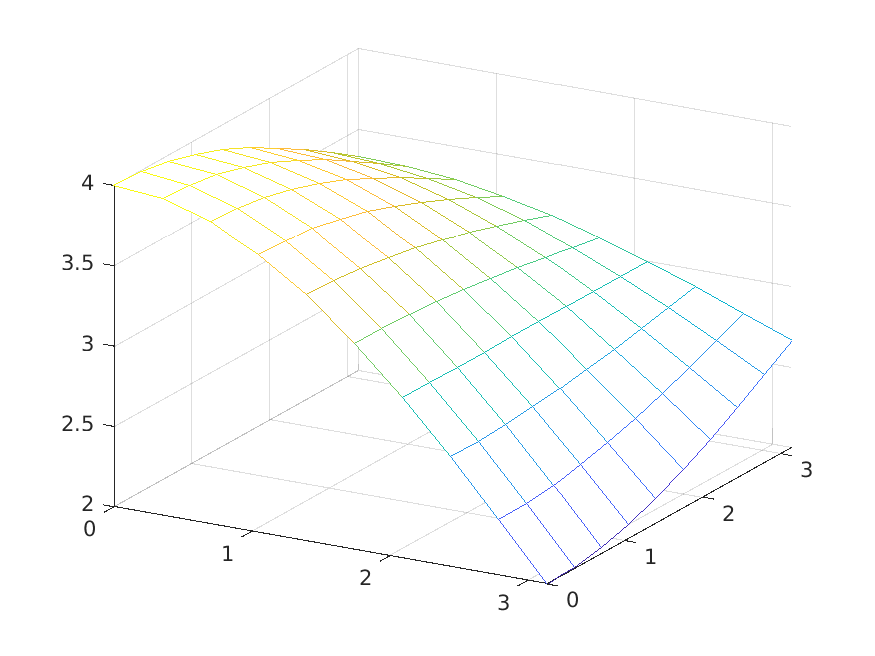}
\label{valutazioni_lambda4n=40_fA}
\caption{Plot of the eigenvalues functions $\lambda_l(\mathbf{f}_A)$, $l=1,\ldots,4$ when $n=10$.}\label{fig:eig_fA}
\end{figure}

{
\begin{Remark}
Because of the spectral properties of the symbol $\mathbf{f}_A$ and $T_{\bf{n}}(\mathbf{f}_A)$, we need to choose  $$\alpha< 2\max_{j=1,\dots,4}\left\{\left(\hat{a}_0^{(j,j)}(\mathbf{f}_{A})\right)^{-1}\|\lambda_j\left(\mathbf{f}_{A}\right)\|_\infty\right\}^{-1}=2\left(\,\frac{3}{8}\,4\right)^{-1}=\frac{4}{3}$$
\end{Remark}
 {In the following we will fix $\alpha$ equal to $\frac{2}{3}$ which is the middle value in the range of the admissible values $(0, 4/3)$. This choice is motivated by the analysis performed in \cite[Section 5]{MR3439215} where it is  investigated the sensitivity with respect to variations around the value $\alpha\approx \left(\lambda_{\rm max}\left(D_A^{-1} A\right)\right)^{-1} $ - which corresponds to the middle point of the admissible values.}


{As suggested by Theorem \ref{thm:Notay_symbol_2D_block}, we need to construct 
\begin{equation}
\label{eq:P_Atilde}
{P}_{\tilde{A}}=\begin{bmatrix}
  \mathcal{C}_{\mathbf{n}}(\mathbf{p}_{A_x})(K_{\mathbf{n}}^T\otimes I_{s_a}) & \\
   &   \mathcal{C}_{\mathbf{n}}(\mathbf{p}_{A_y})(K_{\mathbf{n}}^T \otimes I_{s_a})
  \end{bmatrix}, \quad P_{\hat{C}}=\mathcal{C}_\mathbf{n}(\mathbf{p}_{\hat{C}})(K_\mathbf{n}^T\otimes I_{s_c})
\end{equation}
  
   with $\mathbf{p}_{A_x}$, $\mathbf{p}_{A_y}$, and $\mathbf{p}_{\hat{C}}$ proper bi-variate matrix-valued trigonometric polynomials. Since $\mathbf{f}_{A_x}$ and  $\mathbf{f}_{A_y}$ have the same spectral properties, we fix $\mathbf{p}_{A_x}=\mathbf{p}_{A_y}=: \mathbf{p}_{A}.$
In the following we discuss the choice of the  polynomial $\mathbf{p}_{A}$.
}

We recall that a classical approach is that of constructing $P_{{A}}$ as the multilevel linear interpolation projector. That is,
\begin{equation}\label{eq:P_A}
{P}_{{A}}=\Pi_4^T \left(T_{\mathbf{n}}(\mathbf{p}^{(4)})\left(K_{\mathbf{n,k}}\otimes I_{\textcolor{black}{4}}\right)\right)\Pi_4,
\end{equation}
where $K_{\mathbf{n,k}}=K_{{n,k}}\otimes K_{{n,k}}$, $k=\frac{n-1}{2}$, $n$ odd, and 

\[K_{n,k} = \left[\begin{array}{cccccccc}
		0 &  & & &\\
		1 &  & & &\\
        0 & 0& & &\\
	\vdots& 1& & &\\	
		  & 0& & &\\
		  & \vdots& & &\\
		  &  & & &\vdots\\
		  &  & & &1\\
	      &  & & &0\\		
		\end{array}\right]_{n\times k}.\]

 Moreover, we choose as $\mathbf{p}^{(4)}(\theta_1,\theta_2)=\mathbf{p}^{(2)}(\theta_1) \otimes \mathbf{p}^{(2)}(\theta_2)$, that is the Kronecker product of the univariate trigonometric polynomial 
 \begin{equation}\label{eq:p_L2}
 \mathbf{p}^{(2)}(\theta)=\begin{bmatrix}
1 +{\rm e}^{-\iota \theta}& {\rm e}^{\iota \theta}+1\\
2{\rm e}^{-\iota \theta} & 2
\end{bmatrix}.
 \end{equation}
Next theorem highlights  the properties of the polynomial $\mathbf{p}^{(2)}(\theta)$ and its multivariate version $\mathbf{p}^{(4)}(\theta_1, \theta_2)$ in order to show that the grid transfer operator ${P}_{{A}}$ fulfills  the conditions for the convergence and optimality of the TGM methods proposed in \cite[Section 6]{block_multigrid2}, here conditions (\ref{eqn:condition_on_p_multi_intr})
- (\ref{eqn:condition_on_s_f_multi_intr}).

\begin{Theorem}\label{thm:tgm_conv_Pa}
Let $\mathbf{p}^{(2)}(\theta)$ be the $2\times 2$ trigonometric polynomial defined in (\ref{eq:p_L2}) and $\mathbf{p}^{(4)}(\theta_1,\theta_2)=\mathbf{p}^{(2)}(\theta_1) \otimes \mathbf{p}^{(2)}(\theta_2)$. Consider $ \mathbf{f}_A(\theta_1,\theta_2)$ defined in (\ref{eq:fA}).  Then, $\mathbf{p}^{(4)}(\theta_1,\theta_2)=\mathbf{p}^{(2)}(\theta_1) \otimes \mathbf{p}^{(2)}(\theta_2)$ is such that:
\begin{itemize}
\item \begin{equation}\label{eqn:condition_on_p_multi}
	\sum_{\xi \in \Omega(\boldsymbol{\theta})}\mathbf{p}^{(4)}(\xi)^{H}\mathbf{p}^{(4)}(\xi)>0, \quad\forall\, \boldsymbol{\theta} \in[0,2\pi)^{ {2}},
\end{equation}
 where $\Omega(\theta)=\left\{({\theta_1}, \theta_2) , ({\theta_1+\pi}, \theta_2), ({\theta_1}, \theta_2+\pi), ({\theta_1}+\pi, \theta_2+\pi) \right\}$.
 \item The function
$
\mathbf{s}^{(4)}(\boldsymbol{\theta}) = \mathbf{p}^{(4)}(\boldsymbol{\theta})\left(\sum_{\xi \in \Omega(\boldsymbol{\theta}) }\mathbf{p}^{(4)}(\xi)^{H}\mathbf{p}^{(4)}(\xi)\right)^{-1}\mathbf{p}^{(4)}(\boldsymbol{\theta})^{H}$
is well-defined for all $\boldsymbol{\theta} \in[0,2\pi)^{ {2}}$ and \begin{equation}\label{eqn:condition_on_s_multi}
	\mathbf{s}^{(4)}(\boldsymbol{\theta} _0){\rm \textbf{e}}_{4}={\rm \textbf{e}}_{4},
\end{equation}
where ${\rm \textbf{e}}_{4}=[1,1,1,1]^T$ and $\boldsymbol{\theta}_0=(0,0) $.
\item \begin{equation}\label{eqn:condition_on_s_f_multi}
\lim_{\boldsymbol{\theta} \rightarrow \boldsymbol{\theta}_0} (\lambda_1(\mathbf{f}_A(\boldsymbol{\theta})))^{-1}(1-\lambda_{1}(\mathbf{s}^{(4)}(\boldsymbol{\theta})))=c,
\end{equation}
 where $c\in\mathbb{R}$ is a constant and $\lambda_1(\mathbf{f}_A)$ and $\lambda_{1}(\mathbf{s}^{(4)})$ are the minimal eigenvalue functions of $\mathbf{f}_A$ and $\mathbf{s}^{(4)}$, respectively. 
\end{itemize}
\end{Theorem}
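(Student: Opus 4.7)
My plan is to exploit the Kronecker factorization $\mathbf{p}^{(4)}(\theta_1,\theta_2) = \mathbf{p}^{(2)}(\theta_1) \otimes \mathbf{p}^{(2)}(\theta_2)$ together with the mixed-product rule $(A\otimes B)^H(C\otimes D) = (A^HC)\otimes(B^HD)$, reducing each of the three verifications to a univariate computation on $\mathbf{p}^{(2)}$. Since $\Omega(\boldsymbol{\theta}) = \{\theta_1,\theta_1+\pi\}\times\{\theta_2,\theta_2+\pi\}$, the sum in (\ref{eqn:condition_on_p_multi}) factors as $\sum_{\boldsymbol{\xi}} \mathbf{p}^{(4)}(\boldsymbol{\xi})^H\mathbf{p}^{(4)}(\boldsymbol{\xi}) = M(\theta_1)\otimes M(\theta_2)$, where $M(\theta):=\mathbf{p}^{(2)}(\theta)^H\mathbf{p}^{(2)}(\theta)+\mathbf{p}^{(2)}(\theta+\pi)^H\mathbf{p}^{(2)}(\theta+\pi)$. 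A direct computation from (\ref{eq:p_L2}) yields
\[
M(\theta) = \begin{bmatrix}12 & 2+2\mathrm{e}^{2\iota\theta}\\ 2+2\mathrm{e}^{-2\iota\theta} & 12\end{bmatrix},\qquad \det M(\theta) = 136 - 8\cos 2\theta \ge 128 > 0,
\]
so $M(\theta)$ is HPD, and the Kronecker product of two HPD matrices is HPD, settling (\ref{eqn:condition_on_p_multi}).

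By the same factorization $\mathbf{s}^{(4)}(\boldsymbol{\theta}) = \mathbf{s}^{(2)}(\theta_1)\otimes\mathbf{s}^{(2)}(\theta_2)$ with $\mathbf{s}^{(2)}(\theta) := \mathbf{p}^{(2)}(\theta) M(\theta)^{-1}\mathbf{p}^{(2)}(\theta)^H$, so condition (\ref{eqn:condition_on_s_multi}), using $e_4=\mathbf{1}_2\otimes\mathbf{1}_2$, reduces to $\mathbf{s}^{(2)}(0)\mathbf{1}_2 = \mathbf{1}_2$. This follows from the chain: $\mathbf{p}^{(2)}(0)=2\mathbf{1}_2\mathbf{1}_2^T$ gives $\mathbf{p}^{(2)}(0)^H\mathbf{1}_2 = 4\mathbf{1}_2$; then $M(0)\mathbf{1}_2 = 16\mathbf{1}_2$ yields $M(0)^{-1}(4\mathbf{1}_2) = \tfrac14\mathbf{1}_2$; and finally $\mathbf{p}^{(2)}(0)(\tfrac14\mathbf{1}_2)=\mathbf{1}_2$. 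A byproduct of this computation is $\mathbf{s}^{(2)}(0) = \tfrac12\mathbf{1}_2\mathbf{1}_2^T$, whose spectrum is $\{1,0\}$, so the spectrum of $\mathbf{s}^{(4)}(\boldsymbol{\theta}_0)$ is $\{1,0,0,0\}$; accordingly the symbol $\lambda_1$ in (\ref{eqn:condition_on_s_f_multi}) must be read, consistently with (\ref{eqn:condition_on_s_multi_intr})--(\ref{eqn:condition_on_s_f_multi_intr}), as the eigenvalue branch of $\mathbf{s}^{(4)}$ continuing from $1$ at $\boldsymbol{\theta}_0$ along the direction $e_4$.

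The crux is (\ref{eqn:condition_on_s_f_multi}). Because $\mathbf{s}^{(2)}(\theta)$ is a $2\times 2$ Hermitian analytic function of $\theta$ whose eigenvalues $\{1,0\}$ at $\theta=0$ are simple, Rellich's theorem supplies a real-analytic branch $\mu(\theta)$ with $\mu(0)=1$. From the closed form $\mu = \tfrac12\bigl(\mathrm{tr}\,\mathbf{s}^{(2)}(\theta) + \sqrt{(\mathrm{tr}\,\mathbf{s}^{(2)}(\theta))^2 - 4\det\mathbf{s}^{(2)}(\theta)}\bigr)$ I would Taylor-expand trace and determinant to second order about $\theta=0$, obtaining $\mu(\theta) = 1 - c_1\theta^2 + o(\theta^2)$ with an explicit $c_1>0$. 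The Kronecker spectral mapping then identifies the eigenvalue of $\mathbf{s}^{(4)}$ along $e_4$ as $\mu(\theta_1)\mu(\theta_2)$, giving
\[
1 - \mu(\theta_1)\mu(\theta_2) = c_1(\theta_1^2+\theta_2^2) + o(\|\boldsymbol{\theta}\|^2).
\]
Combined with the Taylor expansion $\lambda_1(\mathbf{f}_A(\boldsymbol{\theta})) = \tfrac14\|\boldsymbol{\theta}\|^2 + o(\|\boldsymbol{\theta}\|^2)$ already recorded in the proof of Proposition~\ref{prop:fA} (whose Hessian at $\boldsymbol{\theta}_0$ is $\tfrac12 I_2$), the limit in (\ref{eqn:condition_on_s_f_multi}) equals $4c_1$, a finite constant, as required. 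The main obstacle is the book-keeping of the second-order Taylor coefficients of $\mathbf{p}^{(2)}(\theta)$ and $M(\theta)^{-1}$ entering the expansion of $\mathbf{s}^{(2)}(\theta)$, but since everything takes place in a $2\times 2$ setting the computation is routine.
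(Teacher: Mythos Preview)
Your treatment of the first two items is essentially the paper's: you reduce to the univariate symbol via the mixed-product rule, compute $M(\theta)$ explicitly, and verify $\mathbf{s}^{(2)}(0)\mathbf{1}_2=\mathbf{1}_2$ by hand (the paper invokes a cited lemma for the last identity, but the content is the same).

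For the third item your route diverges from the paper's, and you miss a structural fact that makes the computation trivial. The paper observes that $\mathbf{s}^{(2)}(\theta)$ is an \emph{algebraic projector}, i.e.\ $\mathbf{s}^{(2)}(\theta)^2=\mathbf{s}^{(2)}(\theta)$ for every $\theta$; hence its eigenvalues lie in $\{0,1\}$, and by continuity the branch through $1$ at $\theta=0$ is identically $1$. Consequently $1-\lambda_1(\mathbf{s}^{(4)}(\boldsymbol{\theta}))\equiv 0$ and the limit in (\ref{eqn:condition_on_s_f_multi}) is simply $c=0$. The projector property is not accidental: $\det\mathbf{p}^{(2)}(\theta)\equiv 0$, so $\mathbf{p}^{(2)}(\theta)=u(\theta)v(\theta)^H$ is rank one; writing $\mathbf{s}^{(2)}=(v^HM^{-1}v)\,uu^H$ and checking that $v$ is an eigenvector of $M$ with eigenvalue $12+4\cos\theta=2\|u\|^2$ gives $(v^HM^{-1}v)\|u\|^2=1$, whence $\mathbf{s}^{(2)}$ is idempotent.

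Your Taylor-expansion argument is not wrong in principle, but your assertion that the second-order coefficient satisfies $c_1>0$ is false: since $\mu(\theta)\equiv 1$, in fact $c_1=0$. This does not break your proof---you only need $c_1$ finite, and the limit you obtain is $4c_1=0$---but it shows that the Rellich/trace--determinant machinery you set up is working hard to produce an identically vanishing numerator. Recognising the rank-one structure of $\mathbf{p}^{(2)}$ up front collapses the whole computation.
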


\begin{proof}
Given ${\rm \textbf{e}}_{2}=[1,1]^T$, the trigonometric polynomial $\mathbf{p}^{(2)}$  by direct computation fulfils the following:
 \begin{enumerate}
 \item [1)]  $\mathbf{p}^{(2)}(0)\,{\rm e}_{2}=4\, {\rm e}_{2}$.
 \item  [2)]  $\mathbf{p}^{(2)}(\pi)\,{\rm e}_{2}=0\, {\rm e}_{2} $.
   \item  [3)]  $\mathbf{p}^{(2)}(0)^H\,{\rm e}_{2}=4\, {\rm e}_{2}$. 
   \item [4)] $\mathbf{p}^{(2)}(\theta)^{H}\mathbf{p}^{(2)}(\theta)+\mathbf{p}^{(2)}(\theta+\pi)^{H}\mathbf{p}^{(2)}(\theta+\pi)=\begin{bmatrix}
    12 &2{\rm e}^{2\iota \theta} + 2\\
2{\rm e}^{-2\iota \theta} + 2&                 12   
\end{bmatrix}$.
 \end{enumerate}
 Item $4)$ clearly implies that $\mathbf{p}^{(2)}(\theta)^{H}\mathbf{p}^{(2)}(\theta)+\mathbf{p}^{(2)}(\theta+\pi)^{H}\mathbf{p}^{(2)}(\theta+\pi)$ is a positive definite matrix, $\forall \theta \in [0,2\pi)$. The items $1)-3)$, together with  \cite[Lemma 4.3]{block_multigrid2},  imply that
 \[\mathbf{s}^{(2)}(0){\rm \textbf{e}}_{2} = {\rm \textbf{e}}_{2}, \] 
 where $\mathbf{s}^{(2)}=\mathbf{p}^{(2)}(\theta)\left(\mathbf{p}^{(2)}(\theta)^{H}\mathbf{p}^{(2)}(\theta)+\mathbf{p}^{(2)}(\theta+\pi)^{H}\mathbf{p}^{(2)}(\theta+\pi)\right)^{-1}\mathbf{p}^{(2)}(\theta)^{H}$, is well-defined for all $\theta\in [0,2\pi)$.
 Because of the tensor structure of $\mathbf{ p}^{(4)}$, the result in \cite[Lemma 6.2]{block_multigrid2} ensures that it verifies the analogous positivity condition given in  (\ref{eqn:condition_on_p_multi}).   Consequently, the quantity \[
\mathbf{s}^{(4)}(\boldsymbol{\theta}) = \mathbf{p}^{(4)}(\boldsymbol{\theta})\left(\sum_{\xi \in \Omega(\boldsymbol{\theta}) }\mathbf{p}^{(4)}(\xi)^{H}\mathbf{p}^{(4)}(\xi)\right)^{-1}\mathbf{p}^{(4)}(\boldsymbol{\theta})^{H}\]
is well-defined for all $\boldsymbol{\theta}=(\theta_1,\theta_2) \in[0,2\pi)^{ {2}}$. In addition, by \cite[Lemma 6.2]{block_multigrid2}, we have $\mathbf{s}^{(4)}(\theta_1,\theta_2)=\mathbf{s}^{(2)}(\theta_1)\otimes \mathbf{s}^{(2)}(\theta_2)$.
Writing ${\rm \textbf{e}}_{4}$ as ${\rm \textbf{e}}_{2}\otimes {\rm \textbf{e}}_{2}$ and exploiting the properties of the Kronecker product, we have
\begin{equation*}
\begin{split}
&\mathbf{s}^{(4)}(\boldsymbol{\theta}_0){\rm \textbf{e}}_{4}=(\mathbf{s}^{(2)}(0)\otimes \mathbf{s}^{(2)}(0))({\rm \textbf{e}}_{2}\otimes {\rm \textbf{e}}_{2})=\\
&(\mathbf{s}^{(2)}(0){\rm \textbf{e}}_{2})\otimes  (\mathbf{s}^{(2)}(0){\rm \textbf{e}}_{2})={\rm \textbf{e}}_{2}\otimes {\rm \textbf{e}}_{2}={\rm \textbf{e}}_{4}.
\end{split}
\end{equation*}
which concludes the proof of (\ref{eqn:condition_on_s_multi}). The last thing to be proved is that
\begin{equation}
\limsup_{\boldsymbol{\theta} \rightarrow \boldsymbol{\theta}_0} \lambda_{1}(\mathbf{f}_A(\boldsymbol{\theta}))^{-1}(1-\lambda_{1}(\mathbf{s}^{(4)}(\boldsymbol{\theta})))<+\infty.
\end{equation}
Note that $\mathbf{s}^{(2)}$ is an algebraic projector, that is, it can be verified by direct computation that $\mathbf{s}^2(\theta)-\mathbf{s}(\theta)=\mathbf{0}$. Since $\lambda_{1}(\mathbf{s}(0))= 1$ and the eigenvalue functions are continuous, we have $\lambda_1(\mathbf{s}^{(2)}(\theta))\equiv 1$ and so $\lambda_1(\mathbf{s}^{(4)})$ is identically 1 as well. Moreover, by (\ref{eq:eig_fa_explicit}), the function $\lambda_1(\mathbf{f}_A)$ is different from $0$ in a neighbourhood of $\boldsymbol{\theta}_0$. Then, the bound in (\ref{eqn:condition_on_s_f_multi}) holds.
\end{proof}

\subsection{Analysis of $\hat{C}$ and choice of $P_{\hat{C}}$}
\label{ssec:Chat}
In order to compute the sequence of grid transfer operators  $P_{\hat{C}}$, {we study the structure of $\hat{C}$, which in the presented setting has a (bi-level) scalar  nature}. 

Then, the associated spectral symbol will be a (bi-variate) scalar-valued function instead of matrix-valued. Multigrid methods for scalar non-negative Toeplitz matrix sequences have been deeply investigated in \cite{AD,ADS,FS1,FS2,Serra_Possio}, so we will first draft the idea on which our choice of ${P}_{\hat{C}}$ is based to see that it fits into a classical setting, referring in particular to Lemma 4.3 in \cite{Serra_Possio}. Then, we will exploit Remark \ref{rmk:results_Czero_sc1} to theoretically validate our choice.
 
Firstly, we recall that $\hat{C}$ is a non-negative matrix and 
 $\hat{C}=B(2\alpha D_{\tilde{A}}^{-1}-\alpha^2D_{\tilde{A}}^{-1}\tilde{A}D_{\tilde{A}}^{-1})B^T $.
{Because of the structure of $A$, $D_{\tilde{A}}=\frac{8}{3}I_{8n^2}$}. Moreover $\alpha=\frac{2}{3}$. Then, $\hat{C}$ is a low-rank correction of the Toeplitz matrix $T_{\mathbf{n}}({f}_{\hat{C}})$ with 
{$${f}_{\hat{C}}(\theta_1,\theta_2)=
\left(\mathbf{f}_{B_x}\left(\frac{1}{2}I_4-\frac{1}{16}\mathbf{f}_{A_{x}}\right) \mathbf{f}^H_{B_x}+\mathbf{f}_{B_y}\left(\frac{1}{2}I_4-\frac{1}{16}\mathbf{f}_{A_{y}}\right) \mathbf{f}^H_{B_y}\right)(\theta_1,\theta_2),
$$}
see Remark \ref{rmk:rectang_Cn_prod} for more details on the construction of the function ${f}_{\hat{C}}$.

The function ${f}_{\hat{C}}$ is a bivariate scalar valued function such that $\hat{a}_0({f}_{\hat{C}})=11/96$. Moreover, it is straightforward to show that  ${f}_{\hat{C}}$ has a zero of order 2 in the origin. For example we can use the procedure analogous to the proof of Proposition \ref{prop:fA} with the additional simplification that ${f}_{\hat{C}}$ is scalar valued then its eigenvalue function coincides with  ${f}_{\hat{C}}$ itself.  Figure \ref{fig:plot_fhatc} shows the plot of ${f}_{\hat{C}}$ on the grid $[0,\pi]^2$.

\begin{figure}[htb]
\centering
\includegraphics[width=0.50\textwidth]{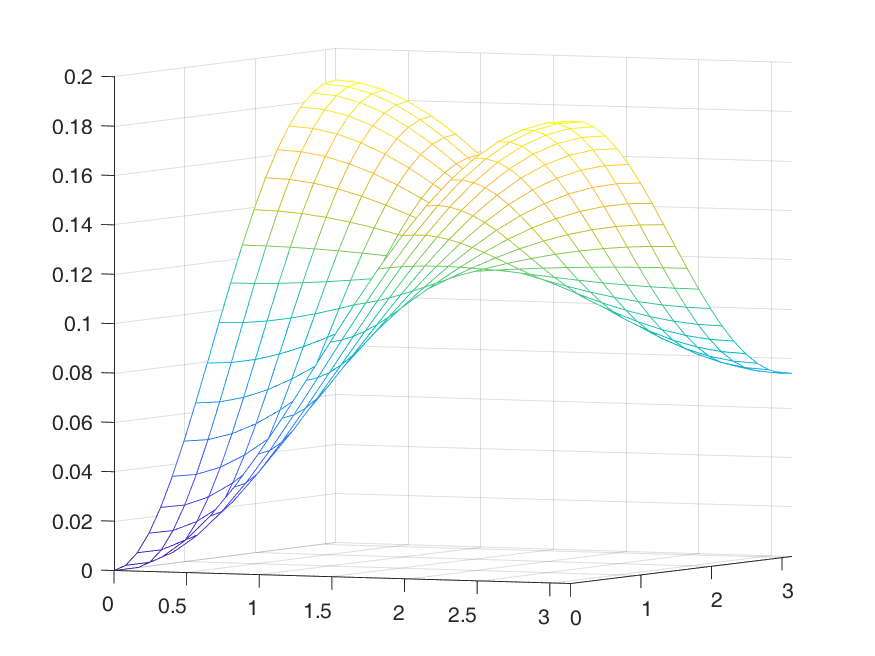}
\caption{Plot of the  function ${f}_{\hat{C}}$ on the grid $[0,\pi]^2$ with $n=20$ points.}\label{fig:plot_fhatc}
\end{figure}

Then, we construct ${P}_{{C}}$ as 
\begin{equation}\label{eq:P_C}
{P}_{{C}}=T_{\mathbf{n}}\left((2+2\cos\theta_1)(2+2\cos\theta_2)\right)K_{\mathbf{n,k}}.
\end{equation}
In order to be sure that with this choice of grid transfer operator assumption 2. of Theorem \ref{thm:Notay_symbol_2D_block} is fulfilled, we recall Remark \ref{rmk:results_Czero_sc1}. Positivity condition \eqref{eqn:condition_on_p_multi_intr} is satisfied since the sum
\begin{align*}
	&\left|(2+2\cos(\theta_1))(2+2\cos(\theta_2))\right|^2+\left|(2+2\cos(\theta_1+\pi))(2+2\cos(\theta_2))\right|^2+\\
	+&\left|(2+2\cos(\theta_1))(2+2\cos(\theta_2+\pi))\right|^2+\left|(2+2\cos(\theta_1+\pi))(2+2\cos(\theta_2+\pi))\right|^2
\end{align*}
is different from 0 for all $(\theta_1,\theta_2)\in \mathbb{R}^2$. Condition \eqref{eq:limsup_Czero_sc1} is fulfilled since the functions in \eqref{eq:simbolo_global} vanish in $(0,0)$ with a zero of order 1.

\subsection{Choice of the smoothing parameter}
\label{ssec:omega}
In order to fulfill the assumption 3.\ of Theorem \ref{thm:Notay_symbol_2D_block} in our setting we require
 that 
\begin{multline*} 
 \omega<2\min \left\{2\alpha -\alpha^2\max_j\left\{ \frac{\|\mathbf{f}_{A_x}\|_\infty}{{\hat{a}_0^{(j,j)}(\mathbf{f}_{A_x})}}, \frac{\|\mathbf{f}_{A_y}\|_\infty}{{\hat{a}_0^{(j,j)}(\mathbf{f}_{A_y})}}\right\},\right.\\
 \left.\hat{a}_0({f}_{\hat{C}})\left\|\mathbf{f}_C+\mathbf{f}_{B_x}\mathbf{f}_{A_x}^{-1}\mathbf{f}_{B_x}^H+\mathbf{f}_{B_y}\mathbf{f}_{A_y}^{-1}\mathbf{f}_{B_y}^H\right\|_\infty^{-1}\right\}.
\end{multline*} 
That is
\begin{align}
\label{eq:choice_omega_num}
 \omega&<2\min \left\{2\left(\frac{2}{3}\right) -\left(\frac{2}{3}\right)^2 \frac{4\cdot 3}{8},\right. \left.\frac{11}{96} \left\|\mathbf{f}_{B_x}\mathbf{f}_{A_x}^{-1}\mathbf{f}_{B_x}^H+\mathbf{f}_{B_y}\mathbf{f}_{A_y}^{-1}\mathbf{f}_{B_y}^H\right\|_\infty^{-1}\right\}\\
 &=2\min\left\{\frac{2}{3}, \frac{11}{24} \right\}=\frac{11}{12}.
\end{align} 

In the first equality above we are using the fact that the function $f_S=\mathbf{f}_{B_x}\mathbf{f}_{A_x}^{-1}\mathbf{f}_{B_x}^H+\mathbf{f}_{B_y}\mathbf{f}_{A_y}^{-1}\mathbf{f}_{B_y}^H$ is a scalar valued function whose maximum is $\frac{1}{4}$ corresponding to the point $(0,\pi)$ (equiv. $(\pi,0)$).
Figure \ref{fig:plot_f_S} shows the plot of ${f}_{S}$ on a uniform grid over $[0,\pi]^2$.

\begin{figure}[htb]
\centering
\includegraphics[width=0.50\textwidth]{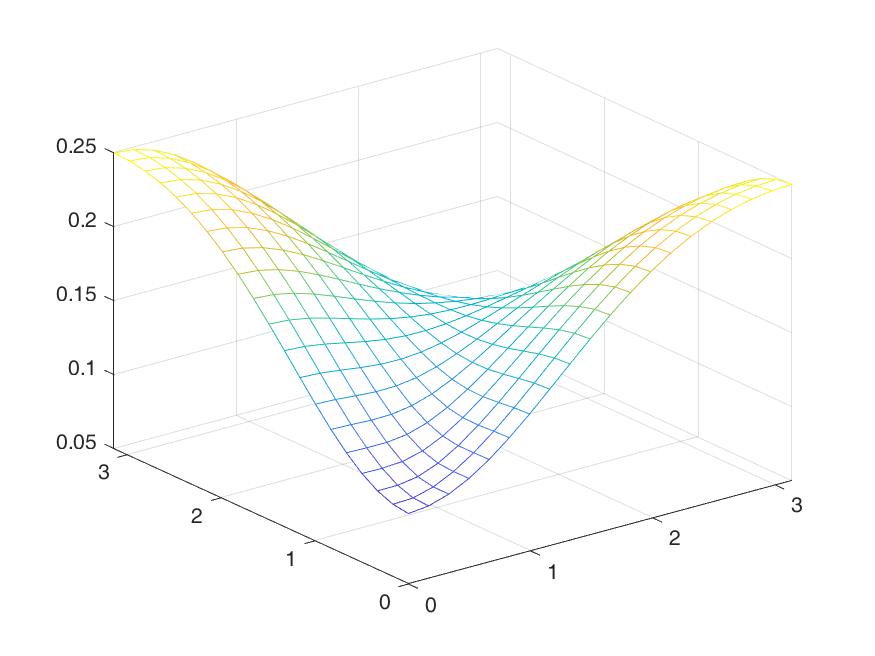}
\caption{Plot of the  function ${f}_{S}$ on a uniform grid over $[0,\pi]^2$ with $n=20$ points.}\label{fig:plot_f_S}
\end{figure}

Therefore, Theorem \ref{thm:Notay_symbol_2D_block} ensures that the TGM applied to the system having $\mathcal{\hat{A}}$ as coefficient matrix converges with the choice $\omega\in (0, 11/12) $.

\section{Extension to MGM}\label{sec:procedure}
In Section \ref{sec:saddle_multi_block} we presented a TGM procedure for solving a system of the form (\ref{eq:saddle_notay}) consisting in the transformation (\ref{eq:def_LAU_2D_block}) and the application of the projecting strategy given by $\mathcal{P}$. In this section we propose a multigrid strategy with more than two grids.
Moreover, in order to analyse the multigrid method for multiple grids, we need to study the problem at the coarser levels.
\subsection{A 3-step procedure}
Firstly, we define the CGC introducing a procedure consisting of 3 steps (3SP):
\begin{itemize}
\item[(i)] transform the system into a new one by making use of the two invertible matrices $\mathcal{L}$ and $\mathcal{U}$.
\item[(ii)] Apply the projecting strategy given by $\mathcal{P}$.
\item[(iii)] “Symmetrize" the problem with a proper variable transformation.
\end{itemize}
First of all, note that after step one the transformed system is not symmetric. However, the block $\hat{C}=B (2\alpha D_{\tilde{A}}^{-1}-\alpha^2D_{\tilde{A}}^{-1}\tilde{A}D_{\tilde{A}}^{-1})B^T$ is a SPD matrix as long as the parameter $\alpha$ is such that $\alpha< 2(\lambda_{\rm max}(D_{\tilde{A}}^{-1}A))^{-1}$. Equivalently, the matrix in block position $(1,1)$ is SPD, since it is given by $\tilde{A}$.

After the application of the grid transfer operator 
\begin{equation*}
\mathcal{P}=\begin{bmatrix}
P_{\tilde{A}}& \\
& P_{\hat{C}}
\end{bmatrix},
\end{equation*}
the matrix at the coarse level has the form
\begin{equation}\label{eq:projectsystem}
\begin{bmatrix}
P_{\tilde{A}}^T\tilde{A}P_{\tilde{A}} &P_{\tilde{A}}^T (I_{(d-1)n^2}-\alpha\tilde{A}D_{\tilde{A}}^{-1})B^TP_{\hat{C}}\\
-P_{\hat{C}}^TB(I_{(d-1)n^2}-\alpha D_{\tilde{A}}^{-1}\tilde{A})P_{\tilde{A}} & P_{\hat{C}}^TCP_{\hat{C}}+P_{\hat{C}}^TB (2\alpha D_{\tilde{A}}^{-1}-\alpha^2D_{\tilde{A}}^{-1}\tilde{A}D_{\tilde{A}}^{-1})B^TP_{\hat{C}}
\end{bmatrix},
\end{equation}
which is a matrix of a form which resembles that of the system (\ref{eq:saddle_notay}), apart from the signs of the blocks in position $(2,1)$ and $(2,2)$. 
This explains why the third step of our procedure consists in the variable transformation obtained multiplying both (\ref{eq:projectsystem}) and the right hand side on the left by the matrix
\begin{equation}
\begin{bmatrix}
I_{2s_an^2}& \\
 & -I_{s_cn^2}
\end{bmatrix}.
\end{equation}
This last step restores the symmetry of the matrix and gives to (\ref{eq:projectsystem}) the same block form of the original coefficient matrix so we can proceed recursively, defining a V-cycle procedure.

Precisely,  the base of the recursion is the original system written as 
\begin{equation}\label{eq:Astep0}
\mathcal{A}_{N_0}\{0\} = \left[\begin{array}{ccc}
	\tilde{A}\{0\} & B\{0\}^T \\
	B\{0\} & -C\{0\}
	\end{array}\right],
\end{equation}
with  $\tilde{A}\{0\}\in \mathbb{R}^{s_an_0^2\times s_an_0^2}$ SPD, $C\{0\} \in \mathbb{R}^{s_cn_0^2\times s_cn_0^2}$ non-negative definite, and $B\{0\}\in \mathbb{R}^{s_cn_0^2\times 2s_an_0^2}$.

At level $\ell$, we have $P_{\tilde{A}\{\ell\}}\in\mathbb{R}^{2s_an_{\ell}^2 \times2s_an_{\ell+1}^2}$ and $P_{\hat{C}\{\ell\}}\in\mathbb{R}^{s_cn_{\ell}^2\times s_cn_{\ell+1}^2}$  the prolongation operators chosen for solving efficiently the scalar systems with coefficient matrix   $\tilde{A}\{\ell\}$ and $\hat{C}\{\ell\}=C\{\ell\}+B\{\ell\} (2\alpha_\ell D_{\tilde{A}\{\ell\}}^{-1}-\alpha_\ell^2D_{\tilde{A}\{\ell\}}^{-1}\tilde{A}\{\ell\}D_{\tilde{A}\{\ell\}}^{-1})B\{\ell\}^T$, respectively, where $\alpha_\ell$ is such that $\alpha_\ell<2\lambda_{\max}\left(D_{\tilde{A}\{\ell\}}^{-1}\tilde{A}\right)^{-1}$.
Then, the inductive step of our recursive definition is given by
\begin{equation}
\mathcal{A}_{N_{\ell+1}}\{\ell+1\} = \left[\begin{array}{ccc}
	\tilde{A}\{\ell+1\} & B\{\ell+1\}^T \\
	B\{\ell+1\} & -\hat{C}\{\ell+1\}
	\end{array}\right].
\end{equation}
where
\begin{small}
\begin{equation}\label{eq:A_recursive_def}
	\tilde{A}\{\ell+1\}=P_{\tilde{A}\{\ell\}}^T\tilde{A}\{\ell\}P_{\tilde{A}\{\ell\}},
\end{equation}
\begin{equation}\label{eq:B_recursive_def}
	B\{\ell+1\}=P_{\hat{C}\{\ell\}}^TB\{\ell\}(I_{2s_an_\ell^2}-\alpha_\ell D_{\tilde{A}\{\ell\}}^{-1}\tilde{A}\{\ell\})P_{\tilde{A}\{\ell\}}, 
\end{equation}
\begin{equation}\label{eq:C_recursive_def}
	\hat{C}\{\ell+1\}=\left(P_{\hat{C}\{\ell\}}^T\hat{C}\{\ell\}P_{\hat{C}\{\ell\}}+P_{\hat{C}}^TB\{\ell\} (2\alpha_\ell D_{\tilde{A}\{\ell\}}^{-1}-\alpha_\ell^2D_{\tilde{A}\{\ell\}}^{-1}\tilde{A}\{\ell\}D_{\tilde{A}\{\ell\}}^{-1})B\{\ell\}^TP_{\hat{C}\{\ell\}}\right).
\end{equation}
\end{small}

For the considerations that we made when we described our 3SP, for each $\ell$ we have that $\tilde{A}\{\ell+1\}\in \mathbb{R}^{2s_an_{\ell+1}^2\times 2s_an_{\ell+1}^2}$ is SPD, $C\{{\ell+1}\} \in \mathbb{R}^{s_cn_{\ell+1}^2\times s_cn_{\ell+1}^2}$ is non-negative definite, and $B\{{\ell+1}\}\in \mathbb{R}^{s_cn_{\ell+1}^2\times 2s_an_\ell^2}$.

This procedure can be seen globally as one block multigrid algorithm  with restriction and prolongation operators $R\{\ell\}$ and $P\{\ell\}$.

\begin{Proposition}\label{prop:proj}
Consider the three steps procedure described by items (i)-(ii). At level $\ell$, define the prolongation and restriction operators as
\begin{multline}
R\{\ell\}=\begin{bmatrix}
I_{2s_an^2\{\ell\}}& \\
\alpha\{\ell\}P_{\hat{C}\{\ell\}}^TB\{\ell-1\}  D_{\tilde{A}\{\ell\}}^{-1}  P_{\tilde{A}\{\ell\}}&-I_{s_cn^2\{\ell\}}\end{bmatrix}\cdot \\
 \cdot\begin{bmatrix}
I_{2s_an^2\{\ell\}}& \\
& -I_{s_cn^2\{\ell\}}& \\
\end{bmatrix}
\begin{bmatrix}
P^T_{\tilde{A}\{\ell\}}&\\
& P^T_{\tilde{A}\{\ell\}}
\end{bmatrix}
\end{multline}
\begin{equation}
P\{\ell\}=\begin{bmatrix}
P^T_{\tilde{A}\{\ell\}}&\\
& P^T_{\hat{C}\{\ell\}}
\end{bmatrix}\begin{bmatrix}
I_{2s_an^2\{\ell\}}& -\alpha\{\ell\}P_{\tilde{A}\{\ell\}}^T D_{\tilde{A}\{\ell\}}^{-1} B^T\{\ell-1\}P_{\tilde{A}\{\ell\}} \\
& I_{s_cn^2\{\ell\}}
\end{bmatrix}
\end{equation}
After $\ell+1$ iterations of 3SP, the resulting coefficient matrix $\mathcal{A}_{N_{\ell+1}}\{\ell+1\}$ is given by
\begin{equation}
\mathcal{A}_{N_{\ell+1}}\{\ell+1\}=R\{\ell\}\begin{bmatrix}
\tilde{A}\{\ell\} & B^T\{\ell\} \\
-B\{\ell\} & \hat{C}\{\ell\} 
\end{bmatrix}P\{\ell\}.
\end{equation}

%
%
\end{Proposition}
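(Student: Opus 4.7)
The plan is a direct block-matrix verification: expand the three-factor products defining $R\{\ell\}$ and $P\{\ell\}$ into closed $2\times 2$ block form, then compute $R\{\ell\}M\{\ell\}P\{\ell\}$, where $M\{\ell\}$ denotes the middle matrix $\begin{bmatrix}\tilde{A}\{\ell\} & B^T\{\ell\}\\ -B\{\ell\} & \hat{C}\{\ell\}\end{bmatrix}$, and match its four blocks with the recursive definitions \eqref{eq:A_recursive_def}--\eqref{eq:C_recursive_def} of $\tilde{A}\{\ell+1\}$, $B\{\ell+1\}$ and $-\hat{C}\{\ell+1\}$. Writing $\mathcal{P}=\diag(P_{\tilde{A}\{\ell\}},P_{\hat{C}\{\ell\}})$ and $J=\diag(I,-I)$, collapsing the three factors defining $R$ and $P$ produces
\[
R\{\ell\}=\begin{bmatrix}P_{\tilde{A}\{\ell\}}^T & 0\\ -\alpha_\ell P_{\hat{C}\{\ell\}}^T B\{\ell\} D_{\tilde{A}\{\ell\}}^{-1} & -P_{\hat{C}\{\ell\}}^T\end{bmatrix},\qquad
P\{\ell\}=\begin{bmatrix}P_{\tilde{A}\{\ell\}} & -\alpha_\ell D_{\tilde{A}\{\ell\}}^{-1} B^T\{\ell\} P_{\hat{C}\{\ell\}}\\ 0 & P_{\hat{C}\{\ell\}}\end{bmatrix},
\]
the signs in the off-diagonal positions coming from commuting the central sign-flip $J$ past the triangular factors $\mathcal{L}$, $\mathcal{U}$ and the block-diagonal projector $\mathcal{P}$.

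With these closed forms I would first compute $M\{\ell\}P\{\ell\}$ and then left-multiply by $R\{\ell\}$. Three of the four resulting blocks match the stated recursions essentially by inspection: the (1,1) entry is $P_{\tilde{A}\{\ell\}}^T\tilde{A}\{\ell\}P_{\tilde{A}\{\ell\}}$, which is \eqref{eq:A_recursive_def}; the (1,2) entry is $P_{\tilde{A}\{\ell\}}^T(I-\alpha_\ell\tilde{A}\{\ell\}D_{\tilde{A}\{\ell\}}^{-1})B^T\{\ell\}P_{\hat{C}\{\ell\}}$, i.e.\ the transpose of \eqref{eq:B_recursive_def}; and the (2,1) entry assembles to $P_{\hat{C}\{\ell\}}^T B\{\ell\}(I-\alpha_\ell D_{\tilde{A}\{\ell\}}^{-1}\tilde{A}\{\ell\})P_{\tilde{A}\{\ell\}}$, coinciding with \eqref{eq:B_recursive_def} once the two minus signs from $-B\{\ell\}$ in $M\{\ell\}$ and $-P_{\hat{C}\{\ell\}}^T$ in $R\{\ell\}$ cancel.

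The only substantive check, and the step I expect to be the main obstacle, is the (2,2) block. Expanding the four summands, it organises into
\[
-P_{\hat{C}\{\ell\}}^T\hat{C}\{\ell\}P_{\hat{C}\{\ell\}}\;-\;P_{\hat{C}\{\ell\}}^T B\{\ell\}\bigl(2\alpha_\ell D_{\tilde{A}\{\ell\}}^{-1}-\alpha_\ell^2 D_{\tilde{A}\{\ell\}}^{-1}\tilde{A}\{\ell\}D_{\tilde{A}\{\ell\}}^{-1}\bigr)B^T\{\ell\}P_{\hat{C}\{\ell\}},
\]
which by \eqref{eq:C_recursive_def} is precisely $-\hat{C}\{\ell+1\}$. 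The care is needed in producing the coefficient $2\alpha_\ell$: it arises from the sum of two separate contributions of the shape $-\alpha_\ell P_{\hat{C}\{\ell\}}^T B\{\ell\}D_{\tilde{A}\{\ell\}}^{-1} B^T\{\ell\}P_{\hat{C}\{\ell\}}$, one coming from pairing the off-diagonal entry of $R\{\ell\}$ against the $B^T\{\ell\}P_{\hat{C}\{\ell\}}$ piece of $M\{\ell\}P\{\ell\}$, and the other from $-P_{\hat{C}\{\ell\}}^T$ multiplying the $\alpha_\ell B\{\ell\}D_{\tilde{A}\{\ell\}}^{-1}B^T\{\ell\}P_{\hat{C}\{\ell\}}$ term that accompanies $\hat{C}\{\ell\}P_{\hat{C}\{\ell\}}$; whereas the $\alpha_\ell^2$ cross-term emerges from the unique product that chains the off-diagonal entries of both $R\{\ell\}$ and $P\{\ell\}$. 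Once these signs are tracked past $J$ with care, the identity reduces to a term-by-term match with \eqref{eq:A_recursive_def}--\eqref{eq:C_recursive_def}.
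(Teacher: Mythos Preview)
Your proposal is correct and follows exactly the route the paper takes: the paper's own proof consists of the single sentence ``The assertion follows immediately by direct computation,'' and what you have written is precisely that computation spelled out block by block. Along the way you have also silently repaired several evident typos in the statement of the proposition (e.g.\ $B\{\ell-1\}$ versus $B\{\ell\}$, and the second diagonal entry of the rightmost factor of $R\{\ell\}$), which is appropriate since your closed forms for $R\{\ell\}$ and $P\{\ell\}$ are the ones that make the identity dimensionally consistent and true.
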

\begin{proof}
The assertion follows immediately by direct computation.
%
%
%
%
\end{proof}

\subsection{Smoother and grid transfer operator at coarser levels}\label{ssec:smoth_proj_mgm}
In order to develop a multigrid method as described in the previous subsection, we need to choose a sequence of smoothing parameters $\omega_\ell$ and to construct two sequences of grid transfer operators whose elements are $P_{\tilde{A}\{\ell\}}$ and $P_{\hat{C}\{\ell\}}$, associated respectively with ${\tilde{A}\{\ell\}}$ and ${\hat{C}\{\ell\}}$.

In the circulant setting, the matrices ${\tilde{A}\{\ell\}}$ and ${\hat{C}\{\ell\}}$ are still circulant matrices and we denote by $\mathbf{f}_{\tilde{A}\{\ell\}}$ and $\mathbf{f}_{\hat{C}\{\ell\}}$ the respective generating functions. In the Toeplitz setting, the Toeplitz structure is lost at coarser levels, in particular for the block ${\hat{C}\{\ell\}}$. However, ${\hat{C}\{\ell\}}$ can still be associated with the symbol $\mathbf{f}_{\hat{C}\{\ell\}}$, computed like in the circulant case. This is possible because the corrections at the corners have a rank which remains bounded as $\ell$ increases, as a consequence of the bound on the matrix bandwidth on which we will comment in the next section. 

In principle, our strategy at level $\ell$ is to choose the smoothing parameter $\omega_\ell$ and the grid transfer operators $P_{\tilde{A}\{\ell\}}$ and $P_{\hat{C}\{\ell\}}$ such that the TGM applied to the system with coefficient matrix $\mathcal{A}_{N_{\ell}}\{\ell\}$ converges. The convergence of the ``coarser TGMs'' is a necessary condition for the convergence of full multigrid methods such as the W-cycle and the V-cycle, see \cite{ADS} for a discussion on the topic.

Combining all the previous considerations, for constructing an MGM for $\mathcal{A}$ in \eqref{sec:saddle_multi_block} we aim at fulfilling the assumptions of Theorem \ref{thm:Notay_symbol_2D_block} applied to the matrix $\mathcal{A}_{N_{\ell}}\{\ell\}$ for all $\ell$. A rigorous theoretical analysis of the properties of the symbols $\mathbf{f}_{\tilde{A}\{\ell\}}$ and $\mathbf{f}_{\hat{C}\{\ell\}}$ in the block multilevel setting would require a significant amount of computations, undermining the readability of the paper, so we omit it. Concerning $\mathbf{f}_{\tilde{A}\{\ell\}}$, the reader can refer to \cite{block_multigrid2}. In the applications, it is usually possible to choose at the finest level the trigonometric polynomials $\mathbf{p}_{A_x}$, $\mathbf{p}_{A_y}$, and $\mathbf{p}_{\hat{C}}$ in such a way that the same choice is suitable for all levels, that is, the properties of $\mathbf{f}_{\tilde{A}\{\ell\}}$ and $\mathbf{f}_{\hat{C}\{\ell\}}$ which are relevant for the TGM convergence remain unchanged level after level. For instance, this is the case for the problem and TGM analysed in Section \ref{sec:stokes}, as we will remark in Section \ref{sec:numerical}.

Regarding the choice of $\omega_\ell$, considering equation \eqref{eq:choice_of_omega} we derive the following condition:
\begin{multline*}\label{eq:choice_of_omega_ell}
 \omega_\ell<2\min \left\{2\alpha_\ell -\alpha_\ell^2\max_j\left\{ \frac{\|\mathbf{f}_{A_x\{\ell\}}\|_\infty}{{\hat{a}_0^{(j,j)}(\mathbf{f}_{A_x\{\ell\}})}}, \frac{\|\mathbf{f}_{A_y\{\ell\}}\|_\infty}{{\hat{a}_0^{(j,j)}(\mathbf{f}_{A_y\{\ell\}})}}\right\},\right.\\
 \left.\max_j\hat{a}_0^{(j,j)}(\mathbf{f}_{\hat{C}\{\ell\}})\left\|\mathbf{f}_{C\{\ell\}}+\mathbf{f}_{B_x\{\ell\}}\mathbf{f}_{A_x\{\ell\}}^{-1}\mathbf{f}_{B_x\{\ell\}}^H+\mathbf{f}_{B_y\{\ell\}}\mathbf{f}_{A_y\{\ell\}}^{-1}\mathbf{f}_{B_y\{\ell\}}^H\right\|_\infty^{-1}\right\},
\end{multline*} 
which suggests how the smoothing parameter should be chosen at each level for obtaining a convergent W-cycle.

\subsection{Bandwidth at coarser levels}

When the generating function is a (multivariate matrix-valued) trigonometric polynomial, the matrix-vector product with the associated circulant or Toeplitz matrix has a computational cost linear in the matrix size. If this is the case for the blocks $A, B$, and $C$, the matrix-vector product with matrix $\mathcal{A}$ in \eqref{eq:problem_matrix} is still linear in the matrix size. Therefore, in order to bound the computational cost of an iteration of the multigrid method, it is fundamental to make sure that the degree of the generating trigonometric polynomials is bounded as the level increases.
 {In the following  report the result in the unilevel scalar case which was proved in \cite{AD} and \cite{saddle_DBFF}. }


\begin{Lemma}\cite[Proposition 2]{AD} \label{lem:coarse_degree_A}
Let ${A\{\ell\}}$ be defined in \eqref{eq:A_recursive_def}, with $P_{A{\{\ell\}}}=\mathcal{C}_{n_\ell}(p_{A})K_{n_\ell}^T$, where $f_{A\{0\}}$ and $p_A$ are trigonometric polynomials of degree $z_{A\{0\}}$ and $q_A$, respectively. 
Then $f_{A\{\ell\}}$ is a trigonometric polynomial of degree $z_{A\{\ell\}}$ such that:
\begin{enumerate}
	\item $z_{A\{\ell\}} \le \max(z_{A\{0\}}, 2q_A)$ for all $\ell$;
	\item $z_{A\{\ell\}} \le 2q_A$ for $\ell$ large enough.
\end{enumerate}
\end{Lemma}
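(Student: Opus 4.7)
The plan is to use the symbol calculus in the circulant algebra to get a closed-form recurrence for the generating function at consecutive coarse levels, and then to bound the degree by a simple induction together with a contraction argument.

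First I would recall that, in the unilevel scalar circulant setting, if $A\{\ell\} = \mathcal{C}_{n_\ell}(f_{A\{\ell\}})$ and $P_{A\{\ell\}} = \mathcal{C}_{n_\ell}(p_A)K_{n_\ell}^T$, then the Galerkin coarse matrix $A\{\ell+1\}=P_{A\{\ell\}}^T A\{\ell\} P_{A\{\ell\}}$ is itself circulant and is generated by the trigonometric polynomial
\begin{equation*}
f_{A\{\ell+1\}}(2\theta)=\tfrac{1}{2}\Bigl(|p_A(\theta)|^{2} f_{A\{\ell\}}(\theta)+|p_A(\theta+\pi)|^{2} f_{A\{\ell\}}(\theta+\pi)\Bigr).
\end{equation*}
This is the standard Galerkin folding formula and is proved via Theorem \ref{circ-s} (the circulant--Fourier diagonalisation) combined with the observation that $K_{n_\ell}^T$ acts on the Fourier side as a sum over mirror frequencies $\theta$ and $\theta+\pi$. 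Once this formula is in hand, everything else is arithmetic on degrees.

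Next I would read off the degree bound. Since $|p_A|^{2}$ is a trigonometric polynomial of degree $2q_A$, the product $|p_A|^{2} f_{A\{\ell\}}$ (in the variable $\theta$) has degree at most $z_{A\{\ell\}}+2q_A$; the mirror sum symmetrises it and kills the odd Fourier modes, so as a trigonometric polynomial in the variable $2\theta$ it has degree at most $\lceil (z_{A\{\ell\}}+2q_A)/2\rceil$. In particular
\begin{equation*}
z_{A\{\ell+1\}}\le \bigl\lceil z_{A\{\ell\}}/2\bigr\rceil +q_A.
\end{equation*}

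Item 1 then follows by a direct induction on $\ell$: setting $M=\max(z_{A\{0\}},2q_A)$, the base case is clear, and if $z_{A\{\ell\}}\le M$ then $z_{A\{\ell+1\}}\le \lceil M/2\rceil + q_A\le M$ because $M\ge 2q_A$. For item 2, if $z_{A\{0\}}\le 2q_A$ we are done for every $\ell$; otherwise write $z_{A\{0\}}=2q_A+\delta_0$ with $\delta_0>0$ and iterate the inequality to obtain $z_{A\{\ell\}}\le 2q_A+\delta_0/2^{\ell}$. Since the degree is an integer, the excess $\delta_0/2^{\ell}$ drops below $1$ for $\ell$ large enough, which forces $z_{A\{\ell\}}\le 2q_A$.

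The only delicate point I anticipate is the justification of the coarse-symbol formula itself --- one has to keep track of the interaction between the cutting operator $K_{n_\ell}^T$ and the transpose/conjugation of the circulant factor to obtain precisely the $|p_A|^{2}$ weighting in the folding. Everything downstream is then a one-line induction plus a geometric contraction, so this is where the effort goes.
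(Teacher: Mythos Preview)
The paper does not prove this lemma; it simply quotes it from \cite{AD} (Proposition~2 there), so there is no in-paper proof to compare against. Your approach---derive the Galerkin folding formula for the coarse symbol via the circulant diagonalisation and the action of $K_{n_\ell}^T$, then run an induction on the resulting degree recurrence---is exactly the standard route and is the one used in \cite{AD}.

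One small slip worth fixing: the $\pi$-periodic symmetrisation kills the odd Fourier modes, so the degree in the halved variable is at most $\lfloor (z_{A\{\ell\}}+2q_A)/2\rfloor$, not the ceiling. With the ceiling your contraction argument for item~2 can stall: if $\delta_\ell=1$ then $\lceil \delta_\ell/2\rceil=1$ and the excess need never vanish. With the floor you get $\delta_{\ell+1}\le \lfloor \delta_\ell/2\rfloor$, which strictly decreases an integer sequence to $0$ in finitely many steps, and item~2 follows cleanly. Item~1 is unaffected since the ceiling bound is still a valid (if loose) upper bound and your induction there goes through either way.
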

\begin{Lemma}\cite{saddle_DBFF} \label{lem:coarse_degree_B}
Consider the matrices ${B}\{\ell\}=\mathcal{C}_{n_\ell}(f_{{B}\{\ell\}})$ and ${C}\{\ell\}=\mathcal{C}_{n_\ell}(f_{{C}\{\ell\}})$ defined by formulas (\ref{eq:B_recursive_def})-\eqref{eq:C_recursive_def}, with $P_{A{\{\ell\}}}=\mathcal{C}_{n_\ell}(p_{A})K_{n_\ell}^T$ and $P_{\hat{C}{\{\ell\}}}=\mathcal{C}_{n_\ell}(p_{\hat{C}})K_{n_\ell}^T$. Let $z_{B\{\ell\}}$ and $z_{C\{\ell\}}$ be the degrees of $f_{{B}\{\ell\}}$ and $f_{{C}\{\ell\}}$, respectively. Let $q=\max\{q_A, q_{\hat{C}}\},$ where $q_A$ and $q_{\hat{C}}$ are the polynomial degrees associated with $p_{A}$ and $p_{\hat{C}}$, respectively. 

Then, for $\ell$ large enough, it holds 
\begin{enumerate}
\item $z_{B\{\ell\}}\le \max (2z_{B\{0\}}, 4q ),$
\item $z_{C\{\ell\}}\le \max(4 z_{B\{0\}},6q,2z_{C\{0\}}).$
\end{enumerate}
\end{Lemma}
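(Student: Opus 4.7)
The plan is to exploit the closure of circulants under products and Galerkin projection. In the unilevel scalar setting, all blocks are circulants, products correspond to multiplying generating trigonometric polynomials (so degrees add), and the reduction $\mathcal{C}_{n_\ell}(\bar p)K_{n_\ell}\,X\,\mathcal{C}_{n_\ell}(p)K_{n_\ell}^T$ corresponds, at the symbol level, to the halving operator $h\mapsto\tilde h$ with $\tilde h(2\theta)=\tfrac{1}{2}(h(\theta)+h(\theta+\pi))$, which sends polynomials of degree $d$ to polynomials of degree $\lfloor d/2\rfloor$. A further key observation is that $D_{\tilde A\{\ell\}}=\hat a_0^{(0)}(f_{A\{\ell\}})\,I$ is a scalar multiple of the identity, so $I-\alpha_\ell D_{\tilde A\{\ell\}}^{-1}\tilde A\{\ell\}$ remains a circulant whose symbol has degree $z_{A\{\ell\}}$.

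For item 1, I would translate \eqref{eq:B_recursive_def} into the symbol-level recursion
\begin{equation*}
z_{B\{\ell+1\}}\le\bigl\lfloor\tfrac{1}{2}(z_{B\{\ell\}}+z_{A\{\ell\}}+q_A+q_{\hat C})\bigr\rfloor.
\end{equation*}
By Lemma~\ref{lem:coarse_degree_A}, for $\ell$ large enough $z_{A\{\ell\}}\le 2q_A\le 2q$, reducing the recursion to the affine contraction $z_{B\{\ell+1\}}\le z_{B\{\ell\}}/2+2q$, whose unique fixed point is $4q$. Iterating from the first level at which the asymptotic bound on $z_{A\{\ell\}}$ is available, and controlling the transient by means of the non-asymptotic bound $z_{A\{\ell\}}\le\max(z_{A\{0\}},2q_A)$ provided by Lemma~\ref{lem:coarse_degree_A}, one obtains $z_{B\{\ell\}}\le\max(2z_{B\{0\}},4q)$ for $\ell$ large enough, where the factor $2$ absorbs the contribution of the early levels.

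For item 2, the same machinery applied to \eqref{eq:C_recursive_def} splits the coarsened $\hat C\{\ell+1\}$ into two pieces. The first, $P_{\hat C\{\ell\}}^T\hat C\{\ell\}P_{\hat C\{\ell\}}$, has coarse symbol of degree at most $\lfloor(z_{C\{\ell\}}+2q_{\hat C})/2\rfloor$. The second, $P_{\hat C\{\ell\}}^T B\{\ell\}(2\alpha_\ell D_{\tilde A\{\ell\}}^{-1}-\alpha_\ell^2 D_{\tilde A\{\ell\}}^{-1}\tilde A\{\ell\}D_{\tilde A\{\ell\}}^{-1})B\{\ell\}^T P_{\hat C\{\ell\}}$, has pre-halving symbol $|p_{\hat C}|^2\,|f_{B\{\ell\}}|^2\,(2\alpha_\ell/\hat a_0-\alpha_\ell^2 f_{A\{\ell\}}/\hat a_0^2)$ of degree at most $2q_{\hat C}+2z_{B\{\ell\}}+z_{A\{\ell\}}$, giving a post-halving contribution of degree at most $q_{\hat C}+z_{B\{\ell\}}+\lfloor z_{A\{\ell\}}/2\rfloor$. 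Taking the max of the two and substituting the bounds from Lemma~\ref{lem:coarse_degree_A} and from item 1 valid for large $\ell$, the recursion becomes of the form
\begin{equation*}
z_{C\{\ell+1\}}\le\max\bigl(z_{C\{\ell\}}/2+q,\; z_{B\{\ell\}}+2q,\; 6q\bigr),
\end{equation*}
whose iteration yields the claimed bound $z_{C\{\ell\}}\le\max(4z_{B\{0\}},6q,2z_{C\{0\}})$ for $\ell$ large enough.

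The main obstacle is the coupled asymptotic bookkeeping of the three sequences $z_{A\{\ell\}}$, $z_{B\{\ell\}}$, $z_{C\{\ell\}}$: the recursion for $z_{C\{\ell\}}$ depends on $z_{B\{\ell\}}$, which in turn depends on $z_{A\{\ell\}}$, and the factors $2$ and $4$ multiplying the initial-data terms in the stated bounds must be extracted through careful estimation of the transient phase before each sequence reaches its asymptotic regime.
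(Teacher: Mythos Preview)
The paper does not contain a proof of this lemma; it is quoted from \cite{saddle_DBFF} and stated without argument. So there is no ``paper's own proof'' to compare against, and your proposal should be assessed on its own merits.

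Your approach is the standard and correct one: work at the symbol level, use that products of circulants multiply symbols (so degrees add), and that the Galerkin step $K_{n_\ell}\mathcal{C}_{n_\ell}(\bar p)\,X\,\mathcal{C}_{n_\ell}(p)K_{n_\ell}^T$ acts on the symbol by the halving operator, sending degree $d$ to $\lfloor d/2\rfloor$. The observation that $D_{\tilde A\{\ell\}}$ is a scalar multiple of the identity (so it does not raise degrees) is exactly the right simplification in the unilevel scalar setting. Your degree recursions for $z_{B\{\ell\}}$ and $z_{C\{\ell\}}$ are set up correctly, and the contraction argument (affine map with contraction factor $1/2$, hence convergence to the fixed point) is the natural way to extract the asymptotic bound.

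One remark on the constants: once $z_{A\{\ell\}}\le 2q$ kicks in, your contraction $z_{B\{\ell+1\}}\le z_{B\{\ell\}}/2+2q$ actually drives $z_{B\{\ell\}}$ down to $4q$ regardless of the starting value, so asymptotically $z_{B\{\ell\}}\le 4q$ alone, and similarly $z_{C\{\ell\}}\le 6q$. Since the lemma only claims the bounds ``for $\ell$ large enough'' and the stated maxima are $\ge 4q$ and $\ge 6q$ respectively, your argument already delivers the statement without any delicate transient bookkeeping. The initial-data terms $2z_{B\{0\}}$, $4z_{B\{0\}}$, $2z_{C\{0\}}$ in the cited bounds are therefore slack in the asymptotic regime; you need not extract them precisely, and the ``main obstacle'' you flag is in fact not an obstacle for the statement as written.
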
 

Seeing a matrix-valued trigonometric polynomial as a matrix whose elements are trigonometric polynomials, it is straightforward to extend the latter results to block circulant matrices. Indeed, an element of a generating matrix-valued trigonometric polynomial $\mathbf{f}_{{A}\{\ell\}}$, $\mathbf{f}_{{B}\{\ell\}}$ or $\mathbf{f}_{{C}\{\ell\}}$ at coarser levels is the sum of trigonometric polynomials whose degree is bounded by Lemmas \ref{lem:coarse_degree_A}--\ref{lem:coarse_degree_B}.


\section{Numerical Examples}\label{sec:numerical}

In this section we present the numerical confirmation of the efficiency of the proposed block multigrid method when applied on the linear system (\ref{eq:syst_q1_iso_q2}).

 We exploit the theoretical results presented in Section \ref{sec:saddle_multi_block} and Subsection \ref{ssec:A} to choose the smoothing parameter $\omega$ and the grid transfer  operator $\mathcal{P}=\begin{bmatrix}
P_{\tilde{A}}& \\
& P_{\hat{C}}
\end{bmatrix}$. 
Precisely, $P_{\tilde{A}}$ and $P_{C}$ are defined by formulae (\ref{eq:P_Atilde}) and (\ref{eq:P_C}), respectively. The choice of the Jacobi smoothing parameter $\omega$, which completes the multigrid procedure, is related with the range of admissibility in (\ref{eq:choice_omega_num}), that is $\omega\in (0, 11/12) $.

 All the tests are performed using MATLAB 2021a and the error equation at the coarsest level is solved with the MATLAB backslash function.
In all the experiments we use the standard stopping criterion  $\frac{\|r^{(k)}\|_2}{\|\mathbf{b}\|_2}<\epsilon$, where $r^{(k)}=\mathbf{b}-{\mathcal{M}}\mathbf{x}^{(k)}$ and $\epsilon=10^{-6}$. Concerning $\mathbf{b}$, we consider the case where the true solution $\mathbf{x}$ of the linear system ${\mathcal{M}}\mathbf{x} = \mathbf{b}$ is a uniform sampling of $\sin(4t)+\cos(6t)+1$ on $[0,\pi]$ and we compute the right-hand side $\mathbf{b}$ as $\mathbf{b} = {\mathcal{M}}\mathbf{x}$.

Since the grid-transfer operator $P_{{A}}$ has a geometrical meaning---that is, it represents standard bilinear interpolation---we take the partial dimension $n$ of the form $2^{t}+1$, $t=5,\dots,8$.

We first consider a TGM consisting of only 1 post smoothing step of damped Jacobi. We test the efficiency of the TGM with three different values, $\omega= 2/5, 3/5, 4/5$. The iterations shown in Table \ref{tab:TGM_Jacs} remain almost constant as the matrix size increases, confirming the theoretical optimal convergence rate.

\begin{table}
		\begin{center}
		\caption{ Two-Grid iterations for the matrix ${\mathcal{A}}_N$ with $\epsilon= 10^{-6}$ and $\omega_{post}=[2/5, 3/5, 4/5]$.}
		\begin{tabular}{c|c|c|c}		
		$N=9\cdot (2^t+1)^2 $ & \multicolumn{3}{c}{$\#$ Iterations}\\
			\hline
			{$t$} & {$\omega_{post}=2/5$} & {$\omega_{post}=3/5$} & {$\omega_{post}=4/5$} \\
			\hline
			5& 38 & 23& 17\\
			6& 36 & 22& 16\\
			7& 35 & 20& 15 \\
			8& 35 & 19& 15 \\	
	
	\end{tabular}
	\label{tab:TGM_Jacs}
		\end{center}
\end{table}

As a second experiment we consider different number of steps of pre/post smoother. In order to damp the error both in the middle and in the high frequencies, we take a different parameter for the pre-smoother and the post-smoother (when both are present). In particular, we choose $\omega_{ \rm post} = 4/5$ and $\omega_{ \rm pre} = 3/5$, since they give the fastest convergence over the considered sample of admissible values. The convergence rate in all these cases is optimal (see Table \ref{tab:TGM_Jacs_different_steps}) since the presence of the pre smoothing or of additional smoothing steps can accelerate the convergence of the TGM.

\begin{table}
		\begin{center}
		\caption{ Two-Grid iterations for the matrix ${\mathcal{A}}_N$ with $\epsilon= 10^{-6}$ and with different number of pre/post smoothing steps. If both present,  $\omega_{ \rm post} = 4/5$ and $\omega_{ \rm pre} = 3/5$.}
		\begin{tabular}{c|c|c|c|c}		
		$N=9\cdot (2^t+1)^2 $ & \multicolumn{4}{c}{$\#$ Iterations}\\
			\hline
			{$t$} & TGM$(0,1)$ &  TGM$(1,0)$ &  TGM$(1,1)$ &  TGM$(2,2)$ \\
			\hline
			5& 17 & 20 & 16 &15 \\
			6& 16 & 19 & 16 &15\\
			7& 15 & 18 & 15& 14\\
			8& 15 & 17  & 14 & 13\\	
	
	\end{tabular}
	\label{tab:TGM_Jacs_different_steps}
		\end{center}
\end{table}

In Section \ref{sec:procedure} we described the structure of the blocks at coarser levels and how the multigrid ingredients should be chosen when considering more than two grids. 
 {In particular, we consider a number of levels depending by the system matrix size according to the following rule: if the size is of the form $s(2^t+1)^2$, the number of levels after which we stop the iteration is maximum between 1 and $t-3$. 
When considering more levels,}
 our choice of trigonometric polynomials $\mathbf{p}_{A_x}$, $\mathbf{p}_{A_y}$, and $\mathbf{p}_{\hat{C}}$ ensures that the spectral properties of $\tilde{A}\{\ell\}$ and $\hat{C}\{\ell\}$ remain unchanged at coarser levels, as we verified numerically using the associated symbols. Regarding the choice of $\omega_\ell$, we numerically verified that choosing at all levels the smoothing parameter which is suitable for the first level is a computationally cheaper and more robust choice in order to accelerate the convergence of the V-cycle. This approach was already taken in \cite{saddle_DBFF}, where a comparison with an adaptive choice of $\omega_\ell$ was provided.  In Table \ref{tab:TGM_W_V_Jacs} we numerically prove the independence of the convergence rate with respect to the matrix size also for the W-cycle and V-cycle methods using 2 steps of pre/post smoother, with the choice  $\omega_{ \rm post} = 4/5$ and $\omega_{ \rm pre} = 3/5$ at all levels. {In addition, we report the computational times for solving the linear system and the  setup times for the computation of the grid transfer operators. Although W-cycle and V-cycle require the computation of the grid transfer operators for more levels, their setup times remain relatively low. Then the two methods, are preferable in practical cases with respect to the TGM. In particular, V-cycle method  shows  best computational times at the price of just few iterations more. }

\begin{table}
		\begin{center}
		\caption{ Comparison of TGM, W-cycle and V-cycle number of iterations (IT), { CPU times $T(s)$ and setup times $T_{\rm set}(s)$} for the matrix ${\mathcal{A}}_N$, $N=9\cdot (2^t+1)^2 $  with 2 iterations of pre/post smoother of Jacobi and the choice  $\omega_{ \rm post} = 4/5$ and $\omega_{ \rm pre} = 3/5$, $\epsilon= 10^{-6}$. }
		\begin{tabular}{c|c|c|c|c|c|c|c|c|c}		
		&  \multicolumn{3}{c}{ TGM} &\multicolumn{3}{c}{ W-cycle} & \multicolumn{3}{c}{ V-cycle}\\
			\hline
			{$t$} & {IT} &  { $T(s)$} & $T_{\rm set}(s)$& { IT} &  { $T(s)$} & $T_{\rm set}(s)$& {IT} &  { $T(s)$} & $T_{\rm set}(s)$ \\
			\hline
			5& 15& 1.058 &0.046  & 15&   2.100 &0.046  & 15&1.098&0.046  \\
			6& 15& 5.655 &0.183 & 15&5.431 &0.192& 15&1.722&0.192 \\
			7& 14& 31.314 &0.639 & 14& 11.790 &0.854 & 15&3.083&0.854 \\
			8& 13&160.615 &2.450 & 13&30.660&3.566& 16&9.153 &3.566 \\	
	
	\end{tabular}
	\label{tab:TGM_W_V_Jacs}
		\end{center}
\end{table}


In many works \cite{MR3564863,MR3543002, MR2220678, MR3689933, MR2155549}
problems with such saddle-point structure have been treated, exploiting particular  preconditioned iterative strategies. In particular,  a common choice is that of constructing preconditioning for two-by-two block systems involving the Schur complement, whose computation requires the solution of the linear system with coefficient matrix the positive definite part ${\tilde{A}}$ of ${\mathcal{A}}$. Then, the theoretical findings of Subsection \ref{ssec:A} on $\tilde{A}$ and on the grid transfer operator $P_{\tilde{A}\{\ell\}}$ can be exploited also for strategies different from the proposed ones. Moreover, Subsection \ref{ssec:A} suggests valid tools that can be used for constructing grid transfer operators, since it shows how to proceed for verifying the conditions which lead to convergence and optimality. Furthermore, another approach to solve saddle-point problem of the form (\ref{eq:saddle_notay}) consists of multigrid  methods in which the smoother takes into account a special coupling, represented by the off-diagonal blocks in \eqref{eq:saddle_notay}. Then, as a last test we compare our results with the standard  Vanka smoother \cite{MR848451}. The analysis of block smoothers like the one mentioned can be found in \cite{MR3795547,MR2840198,MR3488076}.
In Table \ref{tab:V_Vanka_Jac} we compare the iterations of the V-cycle method using the two different smoothers: on the left we show the iterations for the system $\mathcal{A}$ considering 2 pre/post smooting steps of  Vanka method. {On the right we report the iterations related to the V-cycle method applied to system $\tilde{\mathcal{A}}=\mathcal{LAU}$, whose efficiency has been already discussed. Even if the Vanka smoother performs well in combination with the linear interpolation projecting strategy, we stress that the computational cost of a single Vanka-based MGM iteration is higher than the one of an iteration of our procedure, as shown by the computational times in Table \ref{tab:V_Vanka_Jac}.}

\begin{table}
		\begin{center}
		\caption{
    V-cycle iterations (IT), { CPU times $T(s)$ and setup times $T_{\rm set}(s)$} for the matrix ${{A}}_N$,  with 2 iterations of pre/post smoother of Vanka method (left) and for the matrix ${\mathcal{A}}_N$,   with 2 iterations of pre/post smoother Jacobi (right) and the choice  $\omega_{ \rm post} = 4/5$ and $\omega_{ \rm pre} = 3/5$. $\epsilon= 10^{-6}$ and $N=9\cdot (2^t+1)^2 $.}

		\begin{tabular}{c|c|c|c|c|c|c}		
	 	&  \multicolumn{3}{c}{ V-cycle with Vanka for ${A}_N$} &\multicolumn{3}{c}{V-cycle with Jacobi for ${\mathcal{A}}_N$ } \\
			\hline
			{$t$} & {IT} &  { $T(s)$} & $T_{\rm set}(s)$ & { IT} &  { $T(s)$} & $T_{\rm set}(s)$  \\
			\hline
			5& 10& 3.285	&0.018 & 15	&       1.098	&0.046\\
			6& 12& 75.400 	&0.089 &15		&    1.722		&0.192\\
			7& 12& 867.798	&0.168 &15		&   3.083		&0.854\\
			8& 11&11063.057 & 0.724&	16	&     9.153 	&3.566\\	
	
	\end{tabular}

	\label{tab:V_Vanka_Jac}
		\end{center}
\end{table}

 {
Furthermore, we test our block multigrid method as a preconditioner for solving the linear system with the \textit{GMRES} method. Among the existing Krylov subspace methods, we choose the GMRES because the transformed global system matrix $\tilde{\mathcal{A}}=\mathcal{LAU}$ is not symmetric. Our resulting method is a Preconditioned GMRES, denoted by $P_{\tiny{\mbox{V-cycle}}}$--GMRES,  where the preconditioner is one iteration of the V-cycle method whose behaviour as a standalone solver was shown in Table \ref{tab:TGM_W_V_Jacs}. In all the experiments we use the built-in Matlab function \textit{gmres}.
In the second column of Table \ref{tab:gmres_tepGlobal} we show the number of iterations needed for the convergence of the $P_{\tiny{\mbox{V-cycle}}}$--GMRES when increasing the size of $\tilde{\mathcal{A}}$.
We use the zero vector as initial guess and we set the tolerance of the method to $\epsilon=10^{-6}$.
Precisely the third column of Table \ref{tab:gmres_tepGlobal} shows that the number of iterations of the $P_{\tiny{\mbox{V-cycle}}}$--GMRES remains bounded  when increasing the matrix size of the system, i.e the number of levels $t$.
}

 {
In Table \ref{tab:gmres_tepGlobal} we report also the comparison with a state-of-the-art preconditioner for the Stokes problem, see \cite{MR2155549}.
We consider a block diagonal matrix with $\tilde{A}$ as $(1,1)$-block and having in block $(2,2)$ the pressure mass matrix $M$, which in our FEM setting is the properly sized bi-level scalar Toeplitz matrix generatred by
\[
    {f}_{M}(\theta_1,\theta_2)=\frac{1}{18}\left(8+4\cos(\theta_1)+4\cos(\theta_2)+\cos(\theta_1+\theta_2)+\cos(\theta_1-\theta_2)\right).
\]
We apply the approximate inverse of such preconditioner for the linear system with matrix $\mathcal{A}$ performing 5 V-cycle iterations for the block $\tilde{A}$ and 5 V-cycle iterations for the block $M$. The generating function of $M$ is strictly positive, hence it is straightforward to find a suitable grid transfer operator according to \cite{Serra_Possio}. Concerning $\tilde{A}$, we remark that the findings in \ref{ssec:A} suggest the Jacobi smoothing parameter and grid transfer operator that can be chosen to achieve convergence with an optimal convergence rate, according to \cite{block_multigrid2}. In particular the grid transfer operator can be the one studied in Subsection \ref{ssec:A}. Finally,  we highlight that while  the exact preconditioner is symmetric positive definite, our multigrid procedures are not, hence, we use again the GMRES.
}

\begin{table}\label{tab:gmres_tepGlobal}
		\begin{center}
		\caption{ {$P_{\tiny{\mbox{V-cycle}}}$--GMRES and $P_{\diag}$--GMRES   number of iterations (IT),  CPU times $T(s)$ and setup times $T_{\rm set}(s)$ for the matrix ${\mathcal{A}}_N$, $N=9\cdot (2^t+1)^2 $ for the global matrix ${\mathcal{A}}_N$.}}
		\begin{tabular}{c|c|c|c|c|c|c}		
		&  \multicolumn{3}{c}{ {$P_{\tiny{\mbox{V-cycle}}}$--GMRES}} &\multicolumn{3}{c}{ {$P_{\diag}$--GMRES}} \\
			\hline
			{$t$} & {IT} &  { $T(s)$} & $T_{\rm set}(s)$& { IT} &  { $T(s)$} & $T_{\rm set}(s)$ \\
			\hline
			5& 12&1.056  &0.046 & 58 & 1.468 & 0.086 \\
			6& 13&1.835  &0.192 & 63& 3.016 & 0.249 \\
			7& 14&3.466  &0.854 & 69& 14.459 & 0.632  \\
			8& 17&11.707 &3.566 & 74& 55.492 & 1.858 \\	

	\end{tabular}
		\end{center}
\end{table}

\section{Conclusions}
In this paper we studied a symbol based convergence analysis for problems that have a saddle-point form with block Toeplitz structure. From theoretical point of view, we extended the sufficient conditions for the TGM convergence and the choice of smoothing parameters to the case where the associated generating function $\mathbf{f}$ is a matrix-valued function. Moreover, we demonstrated  that a recursive procedure is possible, preserving the saddle-point structure at the coarser levels, after proper symmetrization.
In order to test the efficiency of the theory we considered the saddle-point system involved in the numerical solution of the Stokes problem in 2D discretized with FEM.

 In particular, we exploited the properties of the  generating functions of the blocks and  how these can be used  to develop a block multigrid method which is convergent and optimal. Numerically, the resulting methods show the expected convergence behavior, confirming the validity of our analysis. 
 
 The promising results suggest further investigation into the efficiency of the procedure for other practical applications with challenging problems. The analysis is noteworthy as it provides with reasonable effort a theoretical guarantee of convergence by solely examining the symbols of the matrix sequences involved.

\section*{Acknowledgments}
The work of the second, third, and fourth authors is partly supported by “Gruppo Nazionale per il Calcolo Scientifico" (GNCS-INdAM).
{Moreover, the work of Isabella Furci was carried out within the framework of the project “A multiscale integrated approach to the study of the nervous system in health and disease (MNESYS)” and has been supported by European Union - NextGenerationEU.}

\bibliographystyle{abbrv}
\bibliography{biblio}

\begin{thebibliography}{10}

\bibitem{AD}
A.~Aric\`o and M.~Donatelli.
\newblock A v-cycle multigrid for multilevel matrix algebras: proof of
  optimality.
\newblock {\em Numer. Math.}, 105(4):511--547, 2007.

\bibitem{ADS}
A.~Aric\`o, M.~Donatelli, and S.~Serra-Capizzano.
\newblock V-cycle optimal convergence for certain (multilevel) structured
  linear systems.
\newblock {\em SIAM J. Matrix Anal. Appl.}, 26(1):186--214, 2004.

\bibitem{MR3564863}
O.~Axelsson, S.~Farouq, and M.~Neytcheva.
\newblock Comparison of preconditioned {K}rylov subspace iteration methods for
  {PDE}-constrained optimization problems: {P}oisson and convection-diffusion
  control.
\newblock {\em Numer. Algorithms}, 73(3):631--663, 2016.

\bibitem{MR2220678}
O.~Axelsson and M.~Neytcheva.
\newblock Eigenvalue estimates for preconditioned saddle point matrices.
\newblock {\em Numer. Linear Algebra Appl.}, 13(4):339--360, 2006.

\bibitem{benzi2005numerical}
M.~Benzi, G.~H. Golub, and J.~Liesen.
\newblock Numerical solution of saddle point problems.
\newblock {\em Acta Numer.}, 14:1--137, 2005.

\bibitem{benzi2006eigenvalues}
M.~Benzi and V.~Simoncini.
\newblock On the eigenvalues of a class of saddle point matrices.
\newblock {\em Numer. Math.}, 103(2):173--196, 2006.

\bibitem{block_multigrid2}
M.~Bolten, M.~Donatelli, P.~Ferrari, and I.~Furci.
\newblock A symbol-based analysis for multigrid methods for block-circulant and
  block-{T}oeplitz systems.
\newblock {\em SIAM J. Matrix Anal. Appl.}, 43(1):405--438, 2022.

\bibitem{saddle_DBFF}
M.~Bolten, M.~Donatelli, P.~Ferrari, and I.~Furci.
\newblock Symbol based convergence analysis in multigrid methods for saddle
  point problems.
\newblock {\em Linear Algebra Appl.}, 671:67--108, 2023.

\bibitem{MR1115205}
F.~Brezzi and M.~Fortin.
\newblock {\em Mixed and hybrid finite element methods}, volume~15 of {\em
  Springer Series in Computational Mathematics}.
\newblock Springer-Verlag, New York, 1991.

\bibitem{davis}
P.~Davis.
\newblock {\em {C}irculant {M}atrices}.
\newblock J. Wiley and Sons, New York, 1979.

\bibitem{MR3689933}
M.~Donatelli, A.~Dorostkar, M.~Mazza, M.~Neytcheva, and S.~Serra-Capizzano.
\newblock Function-based block multigrid strategy for a two-dimensional linear
  elasticity-type problem.
\newblock {\em Comput. Math. Appl.}, 74(5):1015--1028, 2017.

\bibitem{DFFSS}
M.~Donatelli, P.~Ferrari, I.~Furci, S.~Serra-Capizzano, and D.~Sesana.
\newblock Multigrid methods for block-{T}oeplitz linear systems: convergence
  analysis and applications.
\newblock {\em Numer. Linear Algebra Appl.}, 28(4):Paper No. e2356, 20, 2021.

\bibitem{MR3543002}
A.~Dorostkar, M.~Neytcheva, and S.~Serra-Capizzano.
\newblock Spectral analysis of coupled {PDE}s and of their {S}chur complements
  via generalized locally {T}oeplitz sequences in 2{D}.
\newblock {\em Comput. Methods Appl. Mech. Engrg.}, 309:74--105, 2016.

\bibitem{MR4188673}
F.~Durastante and I.~Furci.
\newblock Spectral analysis of saddle-point matrices from optimization problems
  with elliptic {PDE} constraints.
\newblock {\em Electron. J. Linear Algebra}, 36:773--798, 2020.

\bibitem{MR2155549}
H.~C. Elman, D.~J. Silvester, and A.~J. Wathen.
\newblock {\em Finite elements and fast iterative solvers: with applications in
  incompressible fluid dynamics}.
\newblock Numerical Mathematics and Scientific Computation. Oxford University
  Press, New York, 2005.

\bibitem{FS1}
G.~Fiorentino and S.~Serra.
\newblock Multigrid methods for {T}oeplitz matrices.
\newblock {\em Calcolo}, 28(3-4):283--305 (1992), 1991.

\bibitem{FS2}
G.~Fiorentino and S.~Serra.
\newblock Multigrid methods for symmetric positive definite block {T}oeplitz
  matrices with nonnegative generating functions.
\newblock {\em SIAM J. Sci. Comput.}, 17(5):1068--1081 (1996), 1996.

\bibitem{MR3795547}
Y.~He and S.~P. MacLachlan.
\newblock Local {F}ourier analysis of block-structured multigrid relaxation
  schemes for the {S}tokes equations.
\newblock {\em Numer. Linear Algebra Appl.}, 25(3):e2147, 28, 2018.

\bibitem{MR2840198}
S.~P. MacLachlan and C.~W. Oosterlee.
\newblock Local {F}ourier analysis for multigrid with overlapping smoothers
  applied to systems of {PDE}s.
\newblock {\em Numer. Linear Algebra Appl.}, 18(4):751--774, 2011.

\bibitem{MR3439215}
Y.~Notay.
\newblock A new algebraic multigrid approach for {S}tokes problems.
\newblock {\em Numer. Math.}, 132(1):51--84, 2016.

\bibitem{NSV}
D.~Noutsos, S.~Serra-Capizzano, and P.~Vassalos.
\newblock Matrix algebra preconditioners for multilevel {T}oeplitz systems do
  not insure optimal convergence rate.
\newblock {\em Theoret. Comput. Sci.}, 315(2):557--579, 2004.

\bibitem{MR3488076}
C.~Rodrigo, F.~J. Gaspar, and F.~J. Lisbona.
\newblock On a local {F}ourier analysis for overlapping block smoothers on
  triangular grids.
\newblock {\em Appl. Numer. Math.}, 105:96--111, 2016.

\bibitem{RStub}
J.~W. Ruge and K.~St\"{u}ben.
\newblock Algebraic multigrid.
\newblock In {\em Multigrid methods}, volume~3 of {\em Frontiers Appl. Math.},
  pages 73--130. SIAM, Philadelphia, PA, 1987.

\bibitem{locspsb1}
S.~Serra.
\newblock Asymptotic results on the spectra of block {T}oeplitz preconditioned
  matrices.
\newblock {\em SIAM J. Matrix Anal. Appl.}, 20(1):31--44, 1999.

\bibitem{serra1998}
S.~Serra-Capizzano.
\newblock Asymptotic results on the spectra of block {T}oeplitz preconditioned
  matrices.
\newblock {\em SIAM Journal on Matrix Analysis and Applications}, 20(1):31--44,
  1998.

\bibitem{serra99}
S.~Serra-Capizzano.
\newblock Spectral and computational analysis of block {T}oeplitz matrices with
  nonnegative definite generating functions.
\newblock {\em BIT}, 39:152--175, 1999.

\bibitem{Serra_Possio}
S.~Serra-Capizzano and C.~Tablino-Possio.
\newblock Multigrid methods for multilevel circulant matrices.
\newblock {\em SIAM J. Sci. Comput.}, 26(1):55--85, 2004.

\bibitem{sesana2013spectral}
D.~Sesana and V.~Simoncini.
\newblock Spectral analysis of inexact constraint preconditioning for symmetric
  saddle point matrices.
\newblock {\em Linear Algebra Appl.}, 438(6):2683--2700, 2013.

\bibitem{MR848451}
S.~P. Vanka.
\newblock Block-implicit multigrid solution of {N}avier-{S}tokes equations in
  primitive variables.
\newblock {\em J. Comput. Phys.}, 65(1):138--158, 1986.

\end{thebibliography}
\end{document}